\newtheorem{theo}{Theorem}[section]
\newtheorem{lem}[theo]{Lemma}
\newtheorem{coro}[theo]{Corollary}
\newtheorem{que}[theo]{Question}
\theoremstyle{definition}
\newtheorem{defi}[theo]{Definition}
\newtheorem{ex}[theo]{Example}
\newtheorem{rmk}[theo]{Remark}
\newtheorem*{nota}{Notation}
\crefname{theo}{Theorem}{Theorems}
\crefname{lem}{Lemma}{Lemmas}
\crefname{coro}{Corollary}{Corollaries}
\crefname{que}{Question}{Questions}
\crefname{defi}{Definition}{Definitions}
\crefname{ex}{Example}{Examples}
\crefname{rmk}{Remark}{Remarks}
\crefname{enumi}{}{}
\crefname{section}{Section}{Sections}
\renewcommand*\qedsymbol{\setbox0=\hbox{\quad\footnotesize{\normalfont Q.E.D.}}\kern\wd0 \strut \hfill \kern-\wd0 \box0}
\newcommand*\N{\mathbb{N}}
\newcommand*\Z{\mathbb{Z}}
\newcommand*\sth{:}
\newcommand*\map{\colon}
\def\rfrac#1#2{\def\rf@num{#1}\def\rf@den{#2}\mathpalette\rfr@c\relax} \def\rfr@c#1#2{\raise.5ex\hbox{$\mathsurround=0pt #1\rf@num$}\kern-0.1em/\kern-0.1em\lower.25ex\hbox{$\mathsurround=0pt #1\rf@den$}}\catcode`@=12
\newcommand*\inc{{\iota}}
\newcommand*\id{{\mathrm{id}}}
\newcommand*\For{{\mathrm{For}}}
\newcommand*\LFor{{\mathrm{LFor}}}
\newcommand*\Diag{{\mathrm{Diag}}}
\newcommand*\premodels{\mathrel{\mathord{\models}_\star}}
\newcommand*\premodelspc{\mathrel{\mathord{\models}^{\mathrm{pc}}_\star}}
\newcommand*\dd{{\mathrm{d}}}
\newcommand*\lang{{\mathtt{L}}}
\newcommand*\theory{{\mathtt{T}}}
\newcommand*\Th{{\mathrm{Th}}}
\newcommand*\loc{{\mathrm{loc}}}
\newcommand*\pc{{\mathrm{pc}}}
\newcommand*\at{{\mathrm{at}}}
\newcommand*\Dtt{\mathtt{D}}
\newcommand*\Sorts{\mathtt{S}}
\newcommand*\BoundConst{\mathtt{B}}
\newcommand*\Bound{\mathrm{B}}
\newcommand*\toprel{\mathrel{\top}}
\newcommand*\Mod{\mathcal{M}}
\title{Completeness in local positive logic}
\author{Arturo Rodr\'{\i}guez Fanlo} 
\thanks{Rodr\'{\i}guez Fanlo has carried out this work at the Hebrew University of Jerusalem while supported by the Israel Academy of Sciences and Humanities \& Council for Higher Education Excellence Fellowship Program for International Postdoctoral Researchers and partially supported by STRANO PID2021-122752NB-I0.}
\address{Autonomous University of Madrid, Ciudad Universitaria de Cantoblanco, 28049, Madrid, España.}
\email[A.~Rodr\'{\i}guez Fanlo]{arturo.rodriguez@uam.es}
\author{Ori Segel}
\address{Einstein Institute of Mathematics, Hebrew University of Jerusalem, 91904, Jerusalem, Israel.}
\email[O.~Segel]{Ori.Segel@mail.huji.ac.il}
\keywords{Positive logic, local logic, omitting types}
\subjclass[2010]{03C95, 03B60}
\begin{document}
\begin{abstract}
We develop the basic model theory of local positive logic, a new logic that mixes positive logic (where negation is not allowed) and local logic (where models omit types of infinite distant pairs). We study several basic model theoretic notions such as compactness, positive closedness (existential closedness) and completeness (irreducibility).\end{abstract}

\maketitle

\section*{Introduction}
It is common practice in mathematics to modify basic objects with more and more structure for the sake of generality. In model theory, well{\hyp}known examples are continuous logic \cite{yaacov2008model} and positive logic \cite{yaacov2007fondements,poizat2008positive,yaacov2003positive} as generalisations of first{\hyp}order logic. In this paper we present the basic framework of a new logic, specially underlining the differences and similarities between this new logic and usual positive logic.

Local positive logic was first introduced by Hrushovski in \cite{hrushovski2022lascar} and applied in his study of approximate subgroups. Local positive logic is the combination of positive logic and local logic. While positive logic is well{\hyp}known, local logic was also first introduced in \cite{hrushovski2022lascar}. 

Local logic is a variation of first{\hyp}order logic that aims to relax compactness. Compactness is a fundamental tool in model theory, but its full power is sometimes a hindrance. Local logic attempts to replace the role of compactness in first{\hyp}order logic with ``local compactness''. It does this by restricting quantifiers to \emph{local quantifiers}, i.e. each quantifier imposes that the variable is located in some ``ball''. Although local logic has several features that distinguish it from first{\hyp}order logic, at the same time this extension is very lightweight in the sense that, by adding constants, local logic can be mostly reduced to sorted first{\hyp}order logic by adding a new sort for each ball centred at the constants. 

Positive logic was born to generalise Robinson's work on existential closedness. Existential closedness is a fundamental notion in model theory capturing the idea of a ``generic'' or ``limit'' model. In positive logic this notion is modified by allowing arbitrary homomorphisms in place of just embeddings, obtaining what we here call \emph{positive closedness}. Formally, this is achieved by restricting the first{\hyp}order syntax exclusively to \emph{positive formulas} (i.e. formulas without negations). As a consequence, positive logic also eliminates the total disconnectedness nature of first{\hyp}order logic. 

As already observed in \cite{yaacov2003positive}, there is no substantial difference between definable and type{\hyp}definable sets in positive logic, which makes it a suitable syntax for hyperdefinable sets (i.e. quotients by type{\hyp}definable equivalence relations). In turn, the original motivation for the development of local logic in \cite{hrushovski2022lascar} was to introduce a natural syntax for the study of locally definable sets, more specifically of definably generated groups. It is therefore natural for us to study the combination of both logics, our original motivation being the development of a syntax for locally hyperdefinable sets, more precisely hyperdefinably generated groups.

We now briefly explain our intended application. We aim to study a group $G$ generated by a symmetric hyperdefinable set $X$ in some first-order structure. We can study $G$ as a local structure in positive logic where the locality relations $\{\dd_n\}_{n\in\N}$ are given by $\dd_n(x,y)\Leftrightarrow x^{-1}y\in X^n$ and we have a relation symbol for every hyperdefinable subset. Alternatively, we can instead consider $G$ as a torsor, rather than a group: in this case, for each $n${\hyp}ary hyperdefinable subset $V$ we add a $2n${\hyp}ary relation symbol with intended interpretation $R_V(\bar{x},\bar{y})\Leftrightarrow (x^{-1}_iy_i)_{i<n}\in V$ --- considering $G$ as a torsor has the advantage that $G$ becomes a subgroup of the group of automorphisms of the structure. The use of local logic ensures that the structure as group is generated by $X$. On the other hand, the use of positive logic captures the hyperdefinable nature of $G$.

To further motivate the introduction of this new logic, we present now a typical case where we think that the introduction of local positive logic might be of interest. This is the case of the study of connected graphs with extra structure. As positive logic was developed to generalise Robinson's work on existentially closed models, the study of graphs with extra structure is a typical example where positive logic is applied in order to obtain universal models analogous to the random graph. Now, these analogues are not always suitable if one is focused on connected graphs. 

For instance, let us take $\Z$ as a graph (i.e. with a binary relation for graph distance $\leq 1$) with unary predicates for the intervals; the associated universal positively closed model turns out to be $\Z\cup\{\pm\infty\}$ which can easily be understood as the ``limit'' obtained from the substructures $(-\infty,-n^2]\cup [-n,n] \cup [n^2,\infty)$ when $n\rightarrow \infty$. We can immediately see that this structure has three critically different parts (namely, $\Z$, $\infty$ and $-\infty$). More precisely, $\Z$ as graph can be visualised as a line and, while $\Z$ is connected (as graph), the resulting universal positively closed model $\Z\cup\{\pm\infty\}$ is disconnected with three different connected components. This is a natural consequence of the fact that positive logic takes into account all substructures of $\Z$, regardless of their connectedness. Local positive logic restricts our attention to just ``connected'' structures, allowing us to obtain only the ``connected limits''. In this example, instead of $\Z\cup\{\pm\infty\}$, we separately obtain the three different connected components as locally positively closed models.\medskip

This paper should be considered a continuation on \cite[Section 2]{hrushovski2022lascar}. For the remaining of the introduction, we outline the structure of the paper and remark the main new results with respect to Hrushovski{'s} work.

In \cref{s:local languages and structures}, we define \emph{local languages} and \emph{local structures} in the many sorted case and with arbitrary local signatures (\cref{d:local language,d:local structure}), and prove a compactness theorem for this logic (\cref{t:local compactness}). In \cite{hrushovski2022lascar}, local positive logic is defined only in the one sort case. While in standard positive logic the generalisation to many sorts is mostly straightforward, in local positive logic a few technical subtleties appear. For instance, the presence of several sorts brings to the surface the notion of \emph{pointed variables} (\cref{d:pointed}), which plays a significant role in several places of the paper. 

In \cref{s:local positive closedness}, we recall the definition of \emph{locally positive closed models} (called existential closed models in \cite{hrushovski2022lascar}) in \cref{d:locally positively closed}, proving their existence in \cref{t:positively closed} --- this result was already stated (with omitted proof) in \cite[Section 2]{hrushovski2022lascar}. We also recall the definition of \emph{denials of formulas} (\cref{d:denials}) and prove their existence in \cref{l:denials}, which was stated without proof in \cite[Section 2]{hrushovski2022lascar}. We also show that positive substructures of locally positively closed models are locally positively closed models in \cref{c:positive substructures}, a fact that was implicitly assumed in \cite{hrushovski2022lascar}.

The study of completeness in local positive logic is the main topic of this paper. We define several completeness properties (\cref{d:irreducibility}) and discuss their relations in \cref{s:completeness}.  In positive logic, completeness is already a more complicated topic than in standard first order logic --- with some properties that are equivalent in standard first order logic no longer being equivalent. As we show here, the situation in local positive logic is even more complicated (\cref{t:weakly complete iff irreducible,t:local irreducibility,e:local irreducibility}) as many properties which are still equivalent in positive logic are not longer so in local positive logic.
We conclude the section studying weak completions and completions (\cref{l:weak completions,l:existence of weak completions,l:maximal and completions,e:maximal not complete}). We conclude the paper discussing when local positive logic reduces to standard positive logic in \cref{s:inherent locality}, giving various equivalent formulations in \cref{t:inherently local}.

This is the first of a series of papers which aims to extend several results of \cite{hrushovski2022lascar}. In subsequent papers we will study types and saturation in local positive logic. We will also study definability patterns for local positive logic, extending \cite{hrushovski2022definability,segel2022positive}. Finally, we will apply the definability patterns to study hyperdefinable approximate subgroups and rough approximate subgroups. Beyond some specific questions, we leave as an open task to adapt more elaborated model theoretic machinery (e.g. classification theory) to local positive logic. 

\section{Local languages and structures} \label{s:local languages and structures}
\begin{defi} \label{d:local language}
A \emph{local (first{\hyp}order) language} is a first{\hyp}order language\footnote{As it is common practice in positive logic, we consider that every first{\hyp}order language contains a binary relation of equality $=$ for each sort and two $0${\hyp}ary relations $\bot$ and $\top$, where $\top$ is always true and $\bot$ is always false.} $\lang$ with the following additional data:
\begin{enumerate}[label={\rm{(\roman*)}}, wide]
\item It is a relational language with constants, i.e. it has no function symbols except for constant symbols.
\item It has a distinguished partially ordered commutative monoid of binary relation symbols $\Dtt^s\coloneqq (\Dtt^s,\dd^0,\ast_{s},\prec_s)$ for each single sort $s$. The binary relation symbols in $\Dtt^s$ are the \emph{locality relation symbols} on the sort $s$. The identity $\dd^0$ of $\Dtt^s$ is the equality symbol on sort $s$ and it is also the least element for $\prec_s$. 
\item There is a function $\BoundConst$ assigning to any pair $c_1\, c_2$ of constant symbols of the same sort a locality relation symbol $\BoundConst_{c_1,c_2}$ on that sort. We call $\BoundConst$ the \emph{bound function} and $\BoundConst_{c_1,c_2}$ the \emph{bound} for $c_1$ and $c_2$.
\end{enumerate}
\end{defi}
We call $(\Dtt,\BoundConst)$ the \emph{local signature} of $\lang$. A \emph{sort} is a tuple of single sorts; its \emph{arity} is its length. We write $\Sorts$ for the set of single sorts. A \emph{variable} $x$ is a set of single variables; its \emph{arity} is its size. The \emph{sort} of a variable $x$ is the sort $s=(s_{x_i})_{x_i\in x}$ where $s_{x_i}$ is the sort of $x_i$.

\begin{rmk} Let us make some remarks about \cref{d:local language}, highlighting possible generalisations:
\begin{enumerate}[label={\rm{(\roman*)}}, wide] 
\item We only study relational languages with constants. This significantly simplifies the definition of the bound function $\BoundConst$. In many situations, one could deal with function symbols by using relations defining their graphs. This is common practice in positive logic, since those relations are forced to define graphs of functions in positively closed models. We leave as a task to develop local positive logic for languages with function symbols. The key issue is to find a good definition of $\BoundConst$ for function symbols that is not too restrictive.
\item We only consider locality relations between elements of the same sort. For locality relations between different sorts, it might be more natural to merge all these sorts in a new one and replace them by disjoint unary predicates. For positively closed models, these predicates would give a partition of the new sort.
\item Locality relations are binary. We leave as an open task the development of a higher{\hyp}arity local logic.    
\end{enumerate}
\end{rmk}

A \emph{local reduct} is a local language which is a reduct; a \emph{local expansion} is a local language which is an expansion. As usual, we write $\lang(C)$ for the expansion of $\lang$ given by adding the symbols in $C$. Note that, when adding constant symbols, the expansion should also extend the bound function $\BoundConst$. Among all the local expansions, we denote by $\lang_\star$ the expansion of $\lang$ that consists on ``forgetting'' the local signature, i.e. $\lang_\star$ is the expansion of $\lang$ given by adding to each sort a maximal locality relation symbol $\dd_{\star}$ whose intended interpretation is trivial (i.e. $\dd_\star(a,b)$ for every pair $a,b$ in the relevant sort).  

\begin{defi}\label{d:pointed} We say that a local language $\lang$ is \emph{pointed} if it has at least one constant symbol for every sort. We say that a set of parameters (new constants) $A$ is \emph{pointed} for $\lang$ if the expansion $\lang(A)$ is pointed. We say that a variable $x$ of sort $s$ is \emph{pointed} in $\lang$ if any evaluation of it is pointed. In other words, a variable is pointed if there is at least one constant symbol for every sort that does not appear in $s$. We say that a sort $s$ is \emph{pointed} if any variable on it is pointed, i.e. if there is at least one constant symbol for every sort that does not appear in $s$. 
\end{defi}

For locality relation symbols $\dd_1,\ldots,\dd_n$ of respective single sorts $s_1,\ldots,s_n$, we write $\dd_1\times\cdots\times \dd_n(x,y)\coloneqq \bigwedge_{1\leq i\leq n} \dd_i(x_i,y_i)$ for the \emph{product} of the locality relation symbols. A \emph{locality relation symbol} of sort $s=(s_1,\ldots,s_n)$ is a product of locality relation symbols of the respective single sorts. We write $\Dtt^s$ for the locality relation symbols of sort $s$ and pointwise extend $\ast$ and $\prec$ to products.\medskip

Let $\For(\lang)$ denote the set of first{\hyp}order formulas of $\lang$. Given a variable $x$, let $\For^x(\lang)$ denote the subset of formulas whose free single variables are contained in $x$ --- in particular, $\For^0(\lang)$ is the set of sentences of $\lang$. 

A formula is \emph{positive} if it is built from atomic formulas by conjunctions, disjunctions and existential quantifiers; let $\For_+$ denote the set of positive formulas. A formula is \emph{primitive positive} if it is  built from atomic formulas by conjunctions and existential quantifiers. A formula is \emph{negative} if it is the negation of a positive formula. A formula is \emph{primitive negative} if it is the negation of a primitive positive formula. 

Let $\varphi$ be a formula, $x$ a single variable, $\dd$ a locality relation on the sort of $x$ and $t$ a term of the same sort such that $x$ does not appear in $t$ (i.e. a constant or a variable different to $x$). The \emph{local existential quantification of $\varphi$ on $x$ in $\dd(t)$} is the formula $\exists x\in \dd(t)\mathrel{} \varphi\coloneqq \exists x\mathrel{}(\dd(x,t)\wedge \varphi)$. The \emph{local universal quantification of $\varphi$ on $x$ in $\dd(t)$} is the formula $\forall x\in\dd(t)\mathrel{}\varphi\coloneqq \neg\exists x\in\dd(t)\mathrel{}\neg\varphi=\forall x\mathrel{}(\dd(x,t)\rightarrow \varphi)$.

A formula is \emph{local} if it is built from atomic formulas using negations, conjunctions and local existential quantification; let $\LFor$ denote the set of local formulas. 
A formula is \emph{local positive} if it is built from atomic formulas using conjunctions, disjunctions and local existential quantification; let $\LFor_+$ denote the set of local positive formulas. A formula is \emph{local primitive positive} if it is  built from atomic formulas by conjunctions and local existential quantifiers. A formula is \emph{local negative} if it is the negation of a local positive formula.

Finally, we define $\Pi_1${\hyp}local formulas. A formula is \emph{$\Pi_1${\hyp}local} if it is of the form $\forall x\mathrel{}\varphi$ where $\varphi$ is a local formula. 

\begin{rmk} \label{r:normal forms} We have the following normal forms:
\begin{enumerate}[label={•}, wide]
\item Every primitive positive formula is equivalent to $\exists x\mathrel{} \theta$ for some finite variable $x$ and some quantifier free primitive positive formula $\theta$.
\item Every positive formula is equivalent to a finite disjunction of primitive positive formulas. Every positive formula is equivalent to $\exists x\mathrel{} \theta$ for some finite variable $x$ and some quantifier free positive formula $\theta$. In particular, every positive formula is existential and every negative formula is universal.
\item Every local formula is equivalent to $Q_1x_1\in\dd_1(t_1)\ldots Q_nx_n\in\dd_n(t_n)\mathrel{} \theta$ for some quantifiers $Q_1,\ldots,Q_n$, some single variables $x_1,\ldots,x_n$, some terms $t_1,\ldots,t_n$, some locality relation symbols $\dd_1,\ldots,\dd_n$ and some quantifier free formula $\theta$. 
\item Every local positive formula is equivalent to $\exists x_1\in\dd_1(t_1)\ldots \exists x_n\in\dd_n(t_n)\mathrel{} \theta$ for some single variables $x_1,\ldots,x_n$, some terms $t_1,\ldots,t_n$, some locality relation symbols $\dd_1,\ldots,\dd_n$ and some quantifier free positive formula $\theta$.
\end{enumerate}
\end{rmk}

From now on, unless otherwise stated, fix a local language $\lang$.
\begin{defi} \label{d:local structure} A \emph{local $\lang${\hyp}structure} is an $\lang${\hyp}structure\footnote{We only consider structures such that no sort is empty.} $M$  satisfying the following locality axioms:
\begin{enumerate}[label={\rm{(A{\arabic*})}}, ref={\rm{A{\arabic*}}}, wide]
\item \label{itm:axiom 1} Every locality relation symbol is interpreted as a symmetric binary relation.
\item \label{itm:axiom 2} Any two locality relation symbols $\dd_1$ and $\dd_2$ on the same sort with $\dd_1\preceq \dd_2$ satisfy $\dd^M_1\subseteq \dd^M_2$.
\item \label{itm:axiom 3} Any two locality relation symbols $\dd_1$ and $\dd_2$ on the same sort satisfy $\dd^M_1\circ \dd^M_2\subseteq (\dd_1\ast \dd_2)^M$.
\item \label{itm:axiom 4} Any two constant symbols $c_1$ and $c_2$ of the same sort satisfy $\BoundConst_{c_1,c_2}(c_1,c_2)$.
\item \label{itm:axiom 5} Any two single elements $a$ and $b$ of the same sort satisfy $\dd(a,b)$ for some locality relation $\dd$ on that sort. 
\end{enumerate}

For any element $a$ and locality relation $\dd$ on its sort, the \emph{$\dd${\hyp}ball at $a$} is the set $\dd(a)\coloneqq \{b\sth M\models \dd(a,b)\}$.
\end{defi}

Expansions and reducts of local structures are defined in the usual way. 


\begin{rmk} The first four locality axioms could be easily expressed by first{\hyp}order universal axiom schemes. We write $\theory_\loc(\lang)$ for the universal first{\hyp}order theory given by axioms \cref{itm:axiom 1,itm:axiom 2,itm:axiom 3,itm:axiom 4} --- we omit $\lang$ if it is clear from the context. On the other hand, \cref{itm:axiom 5} says that some particular quantifier free binary types are omitted, and it could be expressed as a \emph{geometric axiom} (i.e. a positive formula allowing infinite disjunctions too, see \cite[\S D.1]{johnstone2002sketches}).
\end{rmk}

Let $\lang$ be a local language and $\Gamma$ a set of sentences of $\lang$. An \emph{$\lang${\hyp}local model} of $\Gamma$ is a local $\lang${\hyp}structure satisfying it. We typically omit $\lang$ if it is clear from the context. We write $\Mod(\Gamma/\lang)$ for the class of $\lang${\hyp}local models of $\Gamma$. We say that $\Gamma$ is \emph{$\lang${\hyp}locally satisfiable} if it has an $\lang${\hyp}local model. We say that $\Gamma$ is \emph{finitely $\lang${\hyp}locally satisfiable} if every finite subset of $\Gamma$ is $\lang${\hyp}locally satisfiable.


Given two subsets of sentences $\Gamma_1$ and $\Gamma_2$, we say that $\Gamma_1$ \emph{$\lang${\hyp}locally implies} $\Gamma_2$, written $\Gamma_1\models_\lang \Gamma_2$, if every $\lang${\hyp}local model of $\Gamma_1$ is an $\lang${\hyp}local model of $\Gamma_2$. We say that $\Gamma_1$ and $\Gamma_2$ are \emph{$\lang${\hyp}locally elementary equivalent} if they have the same local models. 
\begin{nota} We write $M\models_\lang \Gamma$ to say that $M$ is an $\lang${\hyp}local model of $\Gamma$. We write $\premodels$ (i.e. $\models_{\lang_\ast}$) for satisfaction for non{\hyp}local structures. Similarly, we write $\Mod_\star(\Gamma)$ for the class of models of $\Gamma$ regardless of localness.
\end{nota}

A \emph{negative $\lang${\hyp}local theory} $\theory$ is a set of negative sentences closed under local implication. The \emph{associated positive counterpart} $\theory_+$ is the set of positive sentences whose negations are not in $\theory$. In other words, a positive formula $\varphi$ is in $\theory_+$ if and only if there is a local model $M\models_\lang \theory$ satisfying $\varphi$; we say in that case that $M$ is a \emph{witness of $\varphi\in\theory_+$}. We write $\theory_{\pm}\coloneqq \theory\wedge\theory_+$\footnote{We write $\Gamma\wedge\Gamma'\coloneqq\Gamma\cup \Gamma'$ for sets of formulas.}. 
 

We say that $\theory$ is \emph{$\lang${\hyp}locally axiomatisable} by a subset $\Gamma\subseteq \theory$ if $\Gamma$ is $\lang${\hyp}locally elementary equivalent to $\theory$. Abusing notation, we may use $\Gamma$ to denote $\theory$ or \textsl{vice versa}. 

\begin{rmk} The associated \emph{primitive negative local theory} is the subset of primitive negative sentences of $\theory$. 
Every sentence in $\theory$ is (elementary equivalent to) a finite conjunction of primitive negative sentences, so the associated primitive negative local theory always (locally) axiomatises $\theory$. In sum, we could generally work only with primitive negative sentences; this is very useful in practical examples and was repeatedly used in \cite{hrushovski2022definability,segel2022positive}. 
\end{rmk}
 
The first fundamental result in local logic is the following easy lemma reducing local satisfiability to usual satisfiability.
\begin{lem} \label{l:locally satisfiable} Let $\Gamma$ be a set of $\Pi_1${\hyp}local sentences. Then, $\Gamma$ is locally satisfiable if and only if $\Gamma\wedge \theory_\loc$ is satisfiable. Furthermore, suppose $N\premodels \Gamma\wedge \theory_\loc$ and pick $o\coloneqq (o_s)_{s\in\Sorts}$ with $o_s\in N^s$ for each single sort $s$. If there are constants of sort $s$, assume that $\dd(o_s,c^N)$ for some constant $c$ of sort $s$ and some locality relation $\dd$. Then, the \emph{local component $\Dtt(o)$ at $o$}, given by $\Dtt(o)^s\coloneqq\bigcup_{\dd\in\Dtt^s}\dd(o_s)$ for each sort $s$, is a local model of $\Gamma\wedge\theory_\loc$.
\end{lem}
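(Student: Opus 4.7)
The plan is as follows. The forward direction is immediate: any local $\lang${\hyp}structure $M \models \Gamma$ automatically satisfies \cref{itm:axiom 1,itm:axiom 2,itm:axiom 3,itm:axiom 4} by definition, hence $M \mmodels \Gamma \wedge \theory_\loc$ in the ordinary sense. For the backward direction and the ``furthermore'' clause, I would show that $\Dtt(o)$ inherits from $N$ the structure of a local $\lang${\hyp}model of $\Gamma$.

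First, $\Dtt(o)$ is an $\lang${\hyp}substructure of $N$: since $\lang$ is relational with constants, it suffices to check that every constant interpretation $c^N$ lies in $\Dtt(o)^s$, where $s$ is the sort of $c$. The hypothesis supplies some constant $c_0$ of sort $s$ with $N \mmodels \dd_0(o_s, c_0^N)$ for some $\dd_0 \in \Dtt^s$; \cref{itm:axiom 4} then gives $N \mmodels \BoundConst_{c_0, c}(c_0^N, c^N)$, and \cref{itm:axiom 1,itm:axiom 3} yield $N \mmodels (\dd_0 \ast \BoundConst_{c_0, c})(o_s, c^N)$, so $c^N \in \Dtt(o)^s$ as required. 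The locality axioms \cref{itm:axiom 1,itm:axiom 2,itm:axiom 3,itm:axiom 4} hold in $\Dtt(o)$ since they are universal and descend from $N \mmodels \theory_\loc$, while \cref{itm:axiom 5} follows because, given any $a, b \in \Dtt(o)^s$ with $a \in \dd_a(o_s)$ and $b \in \dd_b(o_s)$, combining \cref{itm:axiom 1,itm:axiom 3} yields $N \mmodels (\dd_a \ast \dd_b)(a, b)$, which transfers to $\Dtt(o)$.

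The crux is the absoluteness of local formulas between $N$ and $\Dtt(o)$: by induction on the structure of a local $\varphi(\bar y)$ (appealing to the normal form of \cref{r:normal forms}), I would show that $\Dtt(o) \models \varphi(\bar a) \Leftrightarrow N \mmodels \varphi(\bar a)$ for every tuple $\bar a$ of matching sort from $\Dtt(o)$. The only nontrivial inductive case is the local existential quantifier $\exists x{\in}\dd(t)\, \psi$, which reduces to showing that the ball $\dd(t^N)^N$ is already contained in $\Dtt(o)^s$ whenever $t^N \in \Dtt(o)^s$: but if $t^N \in \dd_t(o_s)$ and $b \in \dd(t^N)^N$, then \cref{itm:axiom 1,itm:axiom 3} give $b \in (\dd \ast \dd_t)(o_s) \subseteq \Dtt(o)^s$. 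Given absoluteness, any $\Pi_1${\hyp}local sentence, equivalent to $\forall x\, \theta(x)$ with $\theta$ local, transfers downwards from $N$ to $\Dtt(o)$, establishing $\Dtt(o) \models \Gamma$. The main obstacle is precisely this absoluteness argument, whose heart is the closure of each $\Dtt(o)^s$ under taking locality balls, delivered by the monoid structure via \cref{itm:axiom 1,itm:axiom 3}.
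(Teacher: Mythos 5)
Your proposal is correct and follows essentially the same route as the paper: show $\Dtt(o)$ is a substructure containing all constants (via \cref{itm:axiom 4} and the composition axiom), verify the locality axioms (the universal ones descend, and \cref{itm:axiom 5} comes from $\dd_a\ast\dd_b$), prove absoluteness of local formulas by induction using closure of $\Dtt(o)$ under balls, and then transfer $\Pi_1${\hyp}local sentences downwards. The only difference is that you spell out details (the constant argument and the quantifier case of the induction) that the paper leaves implicit.
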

\begin{proof} Obviously, any local model of $\Gamma$ is a model of $\Gamma\wedge\theory_\loc$. On the other hand, suppose that $N\premodels\Gamma\wedge\theory_\loc$, $(o_s)_{s\in\Sorts}$ and $\Dtt(o)$ as in the statement. By \cref{itm:axiom 4}, every constant symbol is interpreted in $\Dtt(o)$, so it is the universe of a substructure of $N$. As $\theory_\loc$ is universal, it follows that $\Dtt(o)$ satisfies \cref{itm:axiom 1,itm:axiom 2,itm:axiom 3,itm:axiom 4}. Also, \cref{itm:axiom 5} trivially holds: for any $a,b\in \Dtt(o)^s$ there are locality relations $\dd_1$ and $\dd_2$ such that $\dd_1(a,o_s)$ and $\dd_2(b,o_s)$, so we have $\dd_1\ast \dd_2(a,b)$.

By a straightforward induction on the complexity, we have that, for any $\varphi(x)\in \LFor^x(\lang)$ and $a\in \Dtt(o)^x$, $\Dtt(o)\models_\lang\varphi(a)$ if and only if $N\premodels\varphi(a)$.
%
Hence, take $\varphi\in \Gamma$ and suppose $\varphi=\forall x\mathrel{}\theta$ with $\theta$ a local formula. As $N\premodels\varphi$, in particular, $N\premodels \theta(a)$ for any $a\in \Dtt(o)^x$, concluding that $\Dtt(o)\models_\lang \theta(a)$  for any $a\in \Dtt(o)^x$. So, $\Dtt(o)\models_\lang\varphi$ and, as $\varphi$ was arbitrary, $\Dtt(o)\models_\lang\Gamma$.
\end{proof}

As an immediate corollary, we get the following compactness theorem for local logic.
\begin{theo}[Compactness theorem] \label{t:local compactness} Let $\Gamma$ be a set of $\Pi_1${\hyp}local sentences. Then, $\Gamma$ is locally satisfiable if and only if it is finitely locally satisfiable.
\end{theo}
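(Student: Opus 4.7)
The plan is to derive this compactness theorem as a direct corollary of \cref{l:locally satisfiable} together with the classical compactness theorem for first{\hyp}order logic.

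One direction is immediate: if $\Gamma$ is locally satisfiable, then a witnessing local model satisfies every finite subset of $\Gamma$.

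For the converse, I assume $\Gamma$ is finitely locally satisfiable, and I first show that $\Gamma \wedge \theory_\loc$ is finitely satisfiable in the ordinary sense (with $\mmodels$). Given a finite $\Delta \subseteq \Gamma \wedge \theory_\loc$, the set $\Gamma_0 \coloneqq \Delta \cap \Gamma$ is finite, so by hypothesis it has a local model $M$. Since $M$ is a local structure, it satisfies $\theory_\loc$ automatically, and hence $M \mmodels \Delta$. Classical first{\hyp}order compactness then yields a structure $N$ with $N \mmodels \Gamma \wedge \theory_\loc$.

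To extract a local model, I invoke the ``furthermore'' clause of \cref{l:locally satisfiable}. For each single sort $s$, I choose $o_s \in N^s$ as follows: if $s$ carries some constant symbol $c$, set $o_s \coloneqq c^N$, so that $\dd^0(o_s, c^N)$ holds trivially and the hypothesis of the lemma is met; if $s$ has no constants, any element of $N^s$ works (the hypothesis is then vacuous). The resulting local component $\Dtt(o)$ is a local model of $\Gamma \wedge \theory_\loc$, and in particular of $\Gamma$.

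I do not foresee any real obstacle; the only subtle point is verifying that the hypothesis of \cref{l:locally satisfiable} on the base point $o$ can always be arranged, which is straightforward by the case split on whether $s$ admits a constant. The restriction to $\Pi_1${\hyp}local sentences is inherited from \cref{l:locally satisfiable} and matches the hypothesis of the theorem.
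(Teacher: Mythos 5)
Your proof is correct and is exactly the argument the paper intends: the theorem is stated there as an immediate corollary of \cref{l:locally satisfiable} combined with classical compactness, and your reduction of finite local satisfiability to finite satisfiability of $\Gamma\wedge\theory_\loc$, followed by passing to a local component with base points chosen at the constants (or arbitrarily on constant-free sorts), is precisely that route. The handling of the base-point hypothesis via $\dd^0$ is the right observation.
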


Every substructure of a local structure is a local structure too, as $\theory_\loc$ is universal. Therefore, we can freely apply the downwards L\"{o}wenheim{\hyp}Skolem Theorem in local logic. For the upward L\"owenheim{\hyp}Skolem Theorem we have the following straightforward version.

\begin{theo}[L\"{o}wenheim{\hyp}Skolem Theorem] Let $\Gamma$ be a set of $\Pi_1${\hyp}local sentences and $\kappa\geq |\lang|$ a cardinal. Let $S$ be a subset of single sorts of $\lang$. Suppose that $\Gamma$ has a local model with some infinite ball on $s$ for each $s\in S$. Then, $\Gamma$ has a local model $M$ such that $|M^s|=\kappa$ for each $s\in S$.
\end{theo}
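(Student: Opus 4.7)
The plan is to reduce to two standard ingredients that are already available in this logic: local compactness (\cref{t:local compactness}) for inflating cardinalities upward, and first-order downwards Löwenheim--Skolem for trimming them back down. The latter applies here because every substructure of a local structure is again local (the locality axioms \cref{itm:axiom 1,itm:axiom 2,itm:axiom 3,itm:axiom 4} are universal, and \cref{itm:axiom 5} is preserved by first-order elementary substructures, since the required witness $\dd(a,b)$ for $a,b\in N^s$ is a first-order formula with parameters holding in the ambient model).

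Concretely, I first use the hypothesis to pick, for each $s\in S$, a locality relation $\dd_s\in\Dtt^s$ and a local model $M_s\models \Gamma$ containing an element $o_s^{M_s}$ with infinite $\dd_s$-ball. I then expand $\lang$ to a local language $\lang'$ of cardinality $\kappa$ by adjoining, for each $s\in S$, a fresh constant $o_s$ and $\kappa$ further constants $c_i^s$ ($i<\kappa$), extending the bound function via $\BoundConst_{o_s, c_i^s}\coloneqq \dd_s$ and $\BoundConst_{c_i^s, c_j^s}\coloneqq \dd_s\ast \dd_s$, with bounds involving existing constants chosen compatibly with the chosen witnesses. Consider
\[ \Gamma' \coloneqq \Gamma \cup \{ \dd_s(o_s, c_i^s) : s\in S,\, i<\kappa \} \cup \{ c_i^s \neq c_j^s : s\in S,\, i<j<\kappa \}. \]
Each new axiom is atomic and therefore $\Pi_1$-local. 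Any finite $\Gamma'_0\subseteq\Gamma'$ mentions only finitely many new constants spread over finitely many sorts, so interpreting each relevant $o_s$ as $o_s^{M_s}$ and the finitely many involved $c_i^s$ as distinct points of the infinite $\dd_s$-ball makes $\Gamma'_0$ locally satisfiable. By \cref{t:local compactness}, $\Gamma'$ has a local model $M'$, and by construction $|(M')^s|\geq \kappa$ for every $s\in S$.

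Finally, since $|\lang'|=\kappa$, the standard multi-sorted first-order downwards Löwenheim--Skolem theorem yields an elementary substructure $N\preceq M'$ with $|N^s|=\kappa$ for each $s\in S$. By the opening paragraph, $N$ is again a local $\lang'$-structure, and elementarity gives $N\models \Gamma$; the $\lang$-reduct of $N$ is the desired local model.

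The main obstacle is the finite satisfiability step. Taken literally, the hypothesis only provides a separate witness $M_s$ for each $s\in S$, whereas satisfying a finite $\Gamma'_0$ mentioning constants on several sorts requires a single local model of $\Gamma$ with infinite $\dd_s$-balls for all those sorts simultaneously (and compatible with the chosen bounds on pre-existing constants). Without this strengthening there are easy $\Pi_1$-local counterexamples, such as the two-sorted axiom $\forall x_1\, y_1\, x_2\, y_2\,(x_1=y_1 \vee x_2=y_2)$, which admits local models infinite on either sort but never on both. I therefore read the hypothesis as asserting the existence of a single local model of $\Gamma$ realising infinite $\dd_s$-balls on every $s\in S$ at once; under this reading the above argument goes through cleanly.
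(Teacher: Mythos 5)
Your argument is essentially the paper's: use compactness to inflate the model, cut down to a local component, and finish with downward L\"owenheim--Skolem (locality being preserved under substructures exactly as you say). The paper packages the first two steps slightly differently --- it takes a $\kappa$-saturated non-local model $N\mmodels\Gamma\wedge\theory_\loc$, uses saturation to find a tuple $o=(o_s)_{s\in\Sorts}$ with $|\dd(o_s)|\geq\kappa$ for $s\in S$ and with $o_s$ near a constant whenever the sort has one, and then invokes \cref{l:locally satisfiable} to see that the local component $\Dtt(o)$ is a local model of $\Gamma$ --- whereas you adjoin $\kappa$ named constants per sort and apply \cref{t:local compactness} directly. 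These are interchangeable. (One cosmetic slip: $c^s_i\neq c^s_j$ is negated atomic rather than atomic, but negations of atomic formulas are still local, so the axioms remain $\Pi_1$-local.)

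The substantive content of your write-up is the final paragraph, and you are right to insist on it. Under the literal per-sort reading of the hypothesis the statement fails: your two-sorted sentence $\forall x_1,y_1,x_2,y_2\,(x_1=y_1\vee x_2=y_2)$ is $\Pi_1$-local, forces at most one sort to be non-trivial, yet admits local models with an infinite ball on either sort separately. The paper's proof makes the same single-model assumption you do, only tacitly: the step ``by hypothesis and saturation of $N$, there is $o$ with $|\dd(o_s)|\geq\kappa$ for each $s\in S$'' requires the relevant partial type --- large balls on all sorts of $S$ at once, compatibly with the constants --- to be finitely satisfiable in models of $\Gamma\wedge\theory_\loc$, which the per-sort hypothesis does not supply. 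So your proof is correct under your corrected reading, and that reading (a single local model of $\Gamma$ with an infinite ball on every $s\in S$) is what the theorem should assume; having identified this is a genuine improvement on the paper's exposition rather than a defect of your argument.
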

\begin{proof} Let $N_0$ be a local model of $\Gamma$ with an infinite ball on $s$ for each $s\in S$. Take a (non{\hyp}local) $\kappa${\hyp}saturated elementary extension $N$ of $N_0$. By hypothesis and saturation of $N$, there is $o=(o_s)_{s\in\Sorts}$ with $o_s\in N^s$ such that, if there is a constant $c$ of sort $s$, $\dd(o_s,c^N)$ for some locality relation $\dd$, and, if $s\in S$, $|\dd(o_s)|\geq \kappa$ for some locality relation $\dd$. Thus, by \cref{l:locally satisfiable}, the local component $\Dtt(o)$ of $N$ at $o$ realises $\Gamma$. Obviously, $|\Dtt(o)^s|\geq \kappa$ for each $s\in S$. Applying downwards L\"owenheim{\hyp}Skolem Theorem now, we can find $M\prec \Dtt(o)$ such that $|M^s|=\kappa$ for $s\in S$.
\end{proof}

For a local structure $M$, we write $\Th_-(M)$ for its negative theory, i.e. the set of negative sentences satisfied by $M$. For a subset $A$, we write $\Th_-(M/A)$ for the theory of the expansion of $M$ given by adding parameters for $A$. Note that $\Th_-(M)$ must be a negative local theory when $M$ is local. We say that a negative local theory $\theory$ is \emph{($\lang${\hyp}locally) weakly complete} if there is a local structure $M$ such that $\theory=\Th_-(M)$ (see \cref{d:irreducibility} for more details).

\begin{rmk} \label{r:locally weakly complete} Let $\theory$ be a negative local theory and $M$ a local structure. Note that $\theory=\Th_-(M)$ if and only if $M\models_\lang \theory_{\pm}$. In particular, a negative local theory $\theory$ is weakly complete if and only if $\theory_{\pm}$ is locally satisfiable.
\end{rmk} 
\begin{rmk} Every negative formula is universal, so it is in particular $\Pi_1${\hyp}local. Hence, \cref{l:locally satisfiable} could be used to show that a negative local theory is locally satisfiable. However, in general, $\theory_{\pm}$ is not $\Pi_1${\hyp}local, so \cref{l:locally satisfiable} cannot be used to show that $\theory$ is weakly complete. 
\end{rmk}

\section{Local positive closedness} \label{s:local positive closedness} From now on, unless otherwise stated, fix a local language $\lang$ and a locally satisfiable negative local theory $\theory$. 

A homomorphism $f\map A\to B$ between two structures is a \emph{positive embedding}\footnote{We use the term ``positive embedding'' by analogy with ``elementary embedding''. In the positive logic literature, it is sometimes called ``immersion'' (e.g. \cite{yaacov2007fondements}). We prefer to avoid the term ``immersion'': normally, in other areas of mathematics, immersions are less restrictive than embeddings (every embedding is an immersion, but not the other way around), whereas the exact opposite is true here. Moreover, it is likely to clash with other uses of the term (e.g. immersions of graphs).}
 if $B\premodels \varphi(f(a))$ implies $A\premodels\varphi(a)$ for any positive formula $\varphi(x)$ and $a\in A^x$. A \emph{positive substructure}\footnote{We use the term ``positive substructure'' by analogy with ``elementary substructure''. In the positive logic literature, it is sometimes called ``immersed substructure''.} is a substructure $A\leq B$ such that the inclusion $\inc\map A\to B$ is a positive embedding. We say that a structure $A$ is \emph{positively closed}\footnote{In the positive logic literature, \emph{positive closedness} is often simply called \emph{existential closedness} (for instance, in \cite{yaacov2003positive,yaacov2007fondements,hrushovski2022definability,hrushovski2022lascar}). The reason is that, when classical first{\hyp}order logic is presented as a particular case of positive logic (via Morleyization), the classical existential closedness becomes positive closedness. However, in general, we consider this abuse of terminology confusing.} for a class of structures $\mathcal{K}$ if every homomorphism $f\map A\to B$ with $B\in\mathcal{K}$ is a positive embedding.



\begin{defi}\label{d:locally positively closed}
Let $\Gamma$ be a set of sentences. We say that $M$ is a \emph{locally positively closed (local) model} of $\Gamma$, written $M\models^\pc_\lang\Gamma$, if $M$ is a local model of $\Gamma$ positively closed for the class $\Mod(\Gamma/\lang)$ of local models of $\Gamma$; we write $\Mod^\pc(\Gamma/\lang)$ for the class of locally positively closed models of $\Gamma$. We say that $M$ is \emph{locally positively closed} if it is a locally positively closed model of $\Th_-(M)$. We typically omit $\lang$ if it is clear from the context. \end{defi}

Given two subsets $\Gamma_1$ and $\Gamma_2$ of first{\hyp}order sentences, we write $\Gamma_1\models^\pc\Gamma_2$ if $N\models_\lang \Gamma_2$ for any $N\models^\pc_\lang\Gamma_1$. 

\begin{nota} We say that $M$ is a \emph{(non{\hyp}local) positively closed model of $\Gamma$}, written $M\premodelspc\Gamma$, if $M$ is a model of $\Gamma$ and is positively closed for the class $\Mod_\star(\Gamma)$. We say that $\Gamma_1\premodelspc\Gamma_2$ if $N\premodels\Gamma_2$ for any $N\premodelspc\Gamma_1$ 
\end{nota}
\begin{rmk}\label{r:homomorphism and positive embeddings} Recall that homomorphisms preserve satisfaction of positive formulas, i.e. if $f\map A\to B$ is a homomorphism and $A\premodels\varphi(a)$ with $\varphi(x)$ a positive formula and $a\in A^x$, then $B\premodels\varphi(f(a))$. Thus, when $f\map A\to B$ is a positive embedding, we have $A\premodels\varphi(a)$ if and only if $B\premodels\varphi(f(a))$. In particular, positive embeddings are embeddings and every positive substructure of a local structure is also a local structure.
\end{rmk}

\begin{rmk}\label{r:positive full diagram}
We recall the following easy fact from positive logic. Let $M$ be an $\lang${\hyp}structure, $N_M$ an $\lang(M)${\hyp}structure and $N$ its $\lang${\hyp}reduct. Then, $\Th_-(N_M)=\Th_-(M/M)$ if and only if $f\map M\to N$ given by $a\mapsto a^{N_M}$ is a positive embedding.  
\end{rmk}

\begin{rmk} \label{r:pc is preserved by increasing the theory} Suppose $M\models^\pc_\lang\theory$ and $M\models_\lang\theory'$ with $\theory\subseteq\theory'$. Then, trivially, $M\models^\pc_\lang\theory'$. In particular, if $M\models^\pc_\lang\theory$ for some negative local theory $\theory$, then $M$ is a locally positively closed model of $\Th_-(M)$. Furthermore, for any subset of parameters $A$, $M_A$ is a locally positively closed model of $\Th_-(M/A)$.
\end{rmk}

\begin{lem}\label{l:positively closed and extensions} Let $M\models^\pc_\lang\theory$ and $A$ a subset. Let $f\map M\to N$ be a homomorphism to $N\models_\lang\theory$. Then, $N_A\models^\pc_\lang\Th_-(M/A)$ if and only if $N\models^\pc_\lang\theory$.  
\end{lem} 
\begin{proof} Since $M\models^\pc_\lang\theory$, we get that $f$ is a positive embedding. By \cref{r:positive full diagram}, we conclude that $N_A\models_\lang\Th_-(M/A)$. If $N\models^\pc_\lang\theory$, since $\theory\subseteq \Th_-(M/A)$, we conclude that $N_A\models^\pc_\lang\Th_-(M/A)$ by \cref{r:pc is preserved by increasing the theory}. Conversely, suppose $N_A\models^\pc_\lang\Th_-(M/A)$ and $g\map N\to B$ is a homomorphism to a local model $B\models_\lang\theory$. Then, $g\circ f\map M\to B$ is a homomorphism, so a positive embedding. By \cref{r:positive full diagram}, taking the respective $\lang(A)${\hyp}expansion, $B_A\models_\lang\Th_-(M/A)$ and $g\circ f$ is an $\lang(A)${\hyp}homomorphism. In particular, $g\map N_A\to B_A$ is a homomorphism. As $N_A\models^\pc_\lang\Th_-(M/A)$ we conclude that $g\map N_A\to B_A$ is a positive embedding, so $g\map N\to B$ is a positive embedding too. 
\end{proof}

\begin{lem} \label{l:local positive atomic are positively closed}  Let $M$ be an $\lang${\hyp}structure. Suppose that for every element $a$ in $M$ there is a positive formula $\theta_a$ such that $a$ is the unique element realising it. Then, $M$ is locally positively closed.
\end{lem}
\begin{proof} Let $f\map M\to N$ be a homomorphism to $N\models_\lang\Th_-(M)$. Suppose $N\models_\lang \varphi(f(a))$ for some positive formula $\varphi(x)$ and $a=(a_i)_{i<n}\in M^x$. Since homomorphisms preserve satisfaction of positive formulas, $N\models_\lang\theta_a(f(a))$ where $\theta_a(x)\coloneqq\bigwedge\theta_{a_i}(x_i)$. Then, $N\models_\lang\exists x\mathrel{}(\theta_a(x)\wedge\varphi(x))$, so $\neg\exists x\mathrel{}(\theta_a(x)\wedge\varphi(x))\notin\Th_-(M)$, concluding $M\models_\lang\exists x\mathrel{}(\theta_a(x)\wedge\varphi(x))$. Now, by assumption, $a$ is the unique element realising $\theta_a$, so $M\models_\lang\varphi(a)$.
\end{proof}

Let us recall (see \cite[Definition 1.13]{yaacov2003positive}) that a \emph{direct system of (local) structures} is a sequence $(M_i,f_{i,j})_{i,j\in I, i\succeq j}$ indexed by a direct set $(I,\prec)$ where each $M_i$ is a (local) structure, each $f_{i,j}\map M_j\to M_i$ is a homomorphism for each $i,j\in I$ with $i\succeq j$ and $f_{i,j}\circ f_{j,k}=f_{i,k}$ for any $i\succeq j\succeq k$. The \emph{direct limit structure}\footnote{Rigorously speaking, the direct limit is this structure together with the corresponding coprojections.} $M=\underrightarrow{\lim}\,M_i$ is the structure with universe $\rfrac{\bigsqcup M^s_i}{\sim}$ for each sort and interpretation $\rfrac{\bigsqcup R(M_i)}{\sim}$ for each relation symbol; where $a\sim b$ for $a\in M_i$ and $b\in M_j$ if there is some $k\succeq i,j$ such that $f_{k,i}(a)=f_{k,j}(b)$. 
We write $f_i$ for the canonical \emph{coprojection} of $M_i$ to $M$, which is defined by $f_i(a)=[a]_{\sim}$. 

We recall the following basic facts:
\begin{lem} \label{l:lemma direct limit} Let $(M_i,f_{i,j})_{i,j\in I, i\succ j}$ be a direct system of $\lang${\hyp}structures and $M$ the direct limit structure. Let $\varphi(x)$ be a formula and $a\in M^x$.
\begin{enumerate}[label={\rm{(\arabic*)}}, ref={\rm{\arabic*}}, wide]
\item \label{itm:lemma direct limit:1} If $\varphi$ is a positive formula, then $M\premodels\varphi(a)$ if and only if there is $i\in I$ such that $M_i\models_\lang\varphi(\widetilde{a})$ for $\widetilde{a}\in M^x_i$ with $a=f_i(\widetilde{a})$.
\item \label{itm:lemma direct limit:2} If $\varphi$ is an existential formula, then $M\models_\lang \varphi(a)$ implies that there is $i\in I$ with $a=f_i(\widetilde{a})$ for some $\widetilde{a}\in M^x_i$ such that $M_i\models_\lang\varphi(\widetilde{a})$.
\end{enumerate}
In particular, $M$ is a model of $\bigcap \Th_{\forall}(M_i)$. 
\end{lem}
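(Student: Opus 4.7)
The plan is to prove part (1) by induction on the complexity of $\varphi$, deduce (2) from (1) via a disjunctive normal form argument, and then obtain the concluding clause by contraposition. For (1) the atomic case is immediate from the construction of the direct limit: the interpretation of each relation in $M$ is $\bigsqcup_i R(M_i)/{\sim}$, each constant $c$ is interpreted as $[c^{M_i}]_\sim$ for any $i$, and equalities between variables reduce to the definition of $\sim$, for which directedness of $I$ supplies a common index. The case of $\vee$ is direct from the induction hypothesis. For $\wedge$, the induction hypothesis produces witnesses in possibly distinct $M_{i_1}$, $M_{i_2}$; directedness of $I$ together with the coordinatewise definition of $\sim$ yields some $j \succeq i_1, i_2$ at which both representatives of $a$ collapse to a single tuple $\widetilde{a} \in M_j^x$ under the transition maps, and preservation of positive formulas under homomorphisms (see \cref{r:homomorphism and positive embeddings}) transports the two witnesses to $M_j$. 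For $\exists y\, \varphi(x, y)$: if $M \mmodels \exists y\, \varphi(a, y)$, pick $b \in M$ and apply the induction hypothesis to $\varphi(a, b)$ to obtain $j$ and representatives $\widetilde{a}, \widetilde{b}$ in $M_j$, whence $M_j \models \exists y\, \varphi(\widetilde{a}, y)$; conversely, from $M_j \models \varphi(\widetilde{a}, \widetilde{b})$ the induction hypothesis gives $M \mmodels \varphi(a, f_j(\widetilde{b}))$, so $M \mmodels \exists y\, \varphi(a, y)$.

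For (2), write $\varphi = \exists y\, \psi(x, y)$ with $\psi$ quantifier-free and put $\psi$ in disjunctive normal form. Given $M \mmodels \varphi(a)$, fix a witness $b$ and select a disjunct $\chi(x, y)$ of $\psi$ satisfied by $(a, b)$, so $\chi$ is a finite conjunction of literals. Handle the positive literals via (1) together with the coordination argument used in the $\wedge$ case to obtain a single $j^* \in I$ and common representatives $\widetilde{a} \in M_{j^*}^x$, $\widetilde{b} \in M_{j^*}^y$ with $M_{j^*} \models \ell(\widetilde{a}, \widetilde{b})$ for every positive literal $\ell$ of $\chi$. Each negative literal $\neg R(t)$ of $\chi$ transfers automatically, since $M_{j^*} \models R(\widetilde{t})$ would force $[\widetilde{t}]_\sim \in R(M)$ by the construction of $M$, contradicting $M \mmodels \neg R(t)$. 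Hence $M_{j^*} \models \chi(\widetilde{a}, \widetilde{b})$, so $M_{j^*} \models \exists y\, \psi(\widetilde{a}, y)$, i.e.\ $M_{j^*} \models \varphi(\widetilde{a})$.

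The \emph{in particular} clause follows by contraposition of (2): if $\varphi = \forall x\, \psi(x) \in \bigcap_i \Th_\forall(M_i)$ but $M$ fails to satisfy $\varphi$, then $\exists x\, \neg\psi(x)$ is an existential sentence satisfied by $M$, so (2) produces some $M_i$ satisfying $\exists x\, \neg\psi(x)$, contradicting $M_i \models \varphi$. The main substantive obstacle throughout is the bookkeeping of representatives: ensuring that finitely many witnessing conditions coming from possibly different $M_i$'s are realised simultaneously in a single $M_{j^*}$ via compatible representatives. This is handled by repeatedly invoking directedness of $I$ together with the explicit description of $\sim$, and is essentially the only place where the specific structure of the direct limit is genuinely used.
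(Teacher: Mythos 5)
Your proof is correct and follows exactly the route the paper intends: the paper dispatches (1) and (2) as ``an easy induction on the complexity'' and derives the final clause from (2), and your argument is simply the worked-out version of that induction (including the correct observations that directedness coordinates finitely many representatives and that negative literals transfer downward because homomorphisms preserve atomic formulas). No gaps.
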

\begin{proof} See \cite[Lemma 1.14]{yaacov2003positive}: {\rm{(\ref*{itm:lemma direct limit:1})}} and {\rm{(\ref*{itm:lemma direct limit:2})}} follow by an easy induction on the complexity. 
By {\rm{(\ref*{itm:lemma direct limit:2})}}, we have that $M$ is a model of $\bigcap \Th_{\forall}(M_i)$.
\end{proof}

\begin{lem}\label{l:direct limit} Let $(M_i,f_{i,j})_{i,j\in I, i\succ j}$ be a direct system of local models of $\theory$. Then, the direct limit structure $M$ is a local model of $\theory$. Furthermore, let $h\map M\to A$ be a homomorphism to another local $\lang${\hyp}structure. Suppose $h_i\coloneqq h\circ f_i\map M_i\to A$ is a positive embedding for every $i$, where $f_i\coloneqq \underrightarrow{\lim}_{j\succeq i}f_{j,i}$. Then, $h$ is a positive embedding. In particular, if $M_i\models^\pc_\lang\theory$ for each $i\in I$, then $M\models^\pc_\lang\theory$.
\end{lem}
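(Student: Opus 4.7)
The plan is to prove the three assertions in sequence: that the direct limit $M$ is a local model of $\theory$, that positive embeddings pass to the limit, and finally the ``in particular'' clause, which will be immediate from the first two together with positive closedness of each $M_i$.

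For the first part, I would observe that the locality axioms \cref{itm:axiom 1,itm:axiom 2,itm:axiom 3,itm:axiom 4} constitute the universal first{\hyp}order theory $\theory_\loc$, so since each $M_i$ satisfies them, the ``in particular'' clause of \cref{l:lemma direct limit} yields $M\models\theory_\loc$ directly. The main obstacle is \cref{itm:axiom 5}, which is genuinely second{\hyp}order and hence not handled by that clause. I plan to tackle it by exploiting directedness: given $a,b\in M^s$, I would pull back to some $i$ and $\widetilde a,\widetilde b\in M_i^s$ with $a=f_i(\widetilde a)$ and $b=f_i(\widetilde b)$; because $M_i$ is local there is a locality relation $\dd$ with $M_i\models\dd(\widetilde a,\widetilde b)$, and since $\dd$ is an atomic (hence positive) formula, part~(1) of \cref{l:lemma direct limit} promotes this to $M\models\dd(a,b)$. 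To conclude $M\models\theory$, I would invoke \cref{r:normal forms} to rewrite each negative sentence of $\theory$ as an equivalent universal first{\hyp}order sentence, so that the ``in particular'' clause of \cref{l:lemma direct limit} applies once more.

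For the ``furthermore'' claim, given a positive formula $\varphi(x)$ and $a\in M^x$ with $A\models\varphi(h(a))$, I would pick $i$ and $\widetilde a\in M_i^x$ with $a=f_i(\widetilde a)$; then $h(a)=h_i(\widetilde a)$, so $A\models\varphi(h_i(\widetilde a))$, and because $h_i$ is a positive embedding we obtain $M_i\models\varphi(\widetilde a)$; a final application of part~(1) of \cref{l:lemma direct limit} lifts this to $M\models\varphi(a)$, showing that $h$ is a positive embedding. The ``in particular'' conclusion then follows at once: for any homomorphism $h\map M\to A$ into a local model $A$ of $\theory$, the compositions $h_i=h\circ f_i\map M_i\to A$ are positive embeddings by positive closedness of the $M_i$, so by what was just proved $h$ is itself a positive embedding and hence $M\models^\pc\theory$.
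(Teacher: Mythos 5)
Your proposal is correct and follows essentially the same route as the paper's proof: axioms \cref{itm:axiom 1,itm:axiom 2,itm:axiom 3,itm:axiom 4} and the negative sentences of $\theory$ are handled via the universal-sentence clause of \cref{l:lemma direct limit}, \cref{itm:axiom 5} is verified by pulling a pair of elements back to a common $M_k$ using directedness, and the positive-embedding claim is proved by pulling $a$ back to some $M_i$ and lifting $M_i\models\varphi(\widetilde a)$ to $M$ via \cref{l:lemma direct limit}(\ref{itm:lemma direct limit:1}). No gaps.
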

\begin{proof} By \cite[Proposition 1.15]{yaacov2003positive}.
%
\end{proof}
The following theorem, and its proof, is standard (see for instance \cite[Lemma 1.20]{yaacov2003positive}). For the sake of completeness, we offer it here in full detail.
\begin{theo} \label{t:positively closed} Let $N$ be a local model of $\theory$. Then, there exists a homomorphism $f\map N\to M$ to $M\models^\pc_\lang\theory$.
\end{theo}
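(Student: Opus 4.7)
The plan is to iteratively close up $N$ by forcing positive formulas to hold whenever consistency permits, and then take a direct limit. Fix a regular cardinal $\kappa \geq (|N| + |\lang|)^+$. I construct a direct system $(N_\alpha, f_{\alpha,\beta})$ of local models of $\theory$ of cardinality ${<}\kappa$, with $N_0 = N$, and set $M \coloneqq \underrightarrow{\lim}_{\alpha < \kappa} N_\alpha$, with $f \coloneqq \underrightarrow{\lim}_\alpha f_{\alpha,0} \colon N \to M$. At each limit stage $\alpha$, take $N_\alpha \coloneqq \underrightarrow{\lim}_{\beta < \alpha} N_\beta$; by \cref{l:direct limit} this is a local model of $\theory$.

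At a successor stage $\alpha \to \alpha+1$, enumerate all pairs $(\varphi(x), a)$ with $\varphi$ a positive $\lang$-formula and $a \in N_\alpha^x$ (fewer than $\kappa$ many), and process them by a nested iteration $N_\alpha = N_\alpha^0 \to N_\alpha^1 \to \cdots$. For each pair $(\varphi, a)$, if $\theory \cup \Diag_+(N_\alpha^\beta) \cup \{\varphi(a)\}$ (with $a$ interpreted via the current homomorphism $N_\alpha \to N_\alpha^\beta$) is locally satisfiable, apply \cref{t:local compactness}; by \cref{r:positive full diagram} its $\lang$-reduct provides a local model $N_\alpha^{\beta+1}$ of $\theory$ with a homomorphism from $N_\alpha^\beta$ making $\varphi(a)$ true. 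Since $\theory \wedge \theory_\loc$ is universal and substructures of local structures are local, we can trim $N_\alpha^{\beta+1}$ to the substructure generated by the image of $N_\alpha^\beta$ together with witnesses for the existentials in $\varphi$, keeping $|N_\alpha^{\beta+1}| \leq |N_\alpha^\beta| + |\lang|$. Otherwise, set $N_\alpha^{\beta+1} \coloneqq N_\alpha^\beta$. Take direct limits at inner limit stages via \cref{l:direct limit}, and let $N_{\alpha+1}$ be the outcome. Regularity of $\kappa$ ensures the cardinality bound ${<}\kappa$ is maintained throughout.

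To verify $M \models^\pc \theory$: $M$ is a local model of $\theory$ by \cref{l:direct limit}. Suppose $h \colon M \to M'$ is a homomorphism to a local model $M'$ of $\theory$ with $M' \models \varphi(h(a))$ for some positive $\varphi$ and $a \in M^x$. Since $x$ is finite and $\kappa$ is regular, there exist $\alpha < \kappa$ and $\widetilde{a} \in N_\alpha^x$ with $a = f_\alpha(\widetilde{a})$, where $f_\alpha \colon N_\alpha \to M$ is the canonical map. For every substage $N_\alpha^\beta$ at stage $\alpha+1$, the composite $N_\alpha^\beta \to M \to M'$ witnesses that $\theory \cup \Diag_+(N_\alpha^\beta) \cup \{\varphi(\widetilde{a})\}$ is locally satisfiable; hence at the substage where $(\varphi, \widetilde{a})$ is processed we successfully force $\varphi(\widetilde{a})$ to hold in $N_\alpha^{\beta+1}$, and therefore in $N_{\alpha+1}$ and in $M$ by \cref{l:lemma direct limit}. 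The main technical obstacle is the bookkeeping of the nested iteration together with the cardinality control via generated substructures and the regularity of $\kappa$.
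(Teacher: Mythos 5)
Your proposal is correct and follows essentially the same strategy as the paper's proof: an outer iteration whose successor step runs a nested inner iteration forcing each positive formula $\varphi(a)$ whenever some homomorphism to a local model of $\theory$ realises it, with direct limits (via \cref{l:direct limit}) at limit stages and the same back-verification that any $\varphi(h(a))$ holding in a target model witnesses that the corresponding forcing step succeeded. The only differences are cosmetic: the paper's outer loop has length $\omega$ (which suffices, since any finite tuple of $M$ already lives in some $M_i$), so your transfinite length $\kappa$, the regularity hypothesis, and the cardinality trimming are not needed for correctness (the paper uses the trimming only in \cref{r:lowenheim skolem pc} to get the bound $|M|\leq|\lang|+|N|$), and the appeal to \cref{t:local compactness} is superfluous since your criterion is outright local satisfiability rather than finite local satisfiability.
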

\begin{proof} We define by recursion a particular sequence $(M_i,f_{i,i'})_{i'<i\in \omega}$ of local models of $\theory$ and homomorphisms $f_{i,i'}\map M_{i'}\to M_i$ with $M_0=N$, in such a way that the resulting direct limit structure $M$ is a locally positively closed model of $\theory$. Note that by \cref{l:direct limit}, we already know that $M$ is a local model of $\theory$ and the coprojection $f\coloneqq f_0$ is a homomorphism.

Given $M_i$ and $f_{i,i'}$, we now define $M_{i+1}$ and $f_{i+1,i'}$. Let $\{(\varphi_j,a_j)\}_{j\in\lambda_i}$ be an enumeration of all pairs where $\varphi_j$ is a positive formula and $a_j$ is a tuple in $M_i$ on the free variables of $\varphi_j$. We define by recursion a sequence $(M^j_i,g^{j,k}_i)_{k<j\in\lambda_i}$ of local models of $\theory$ and homomorphisms $g^{j,k}_i\map M^{k}_i\to M^j_i$ as follows. Set $M^0_i=M_i$. 

Assume $M^j_i$ and $g^{j,k}_i$ are already defined. We check if there is a local model $N\models_\lang \theory$ and a morphism $g\map M^j_i\to N$ such that $N\models_\lang\varphi_j(g(g^{j,0}_i(a_j)))$. If $N$ and $g$ exist, we take $M^{j+1}_i=N$ and $g^{j+1,k}_i=g\circ g^{j,k}_i$. Otherwise, we take $M^{j+1}_i=M^j_i$ and $g^{j+1,k}_i=g^{j,k}_i$.

For $\delta\in\lambda_i$ limit, we take $M^\delta_i$ and $g^{\delta,k}_i$ as direct limit of $(M^j_i,g^{j,k}_i)_{k<j\in\delta}$. In this case, we have that $M^{\delta}_i$ is a local model of $\theory$  and $g^{\delta,k}_i$ is a homomorphism by \cref{l:direct limit}.

Given $(M^j_i,g^{j,k}_i)_{k<j\in\lambda_i}$ we take $M_{i+1}$ and $g^k_i$ to be the respective direct limit. By \cref{l:direct limit}, we have that $M_{i+1}$ is a local model of $\theory$ and $g^k_i\map M^k_i\to M_{i+1}$ is a homomorphism. We take $f_{i+1,i}=g^0_i$ and $f_{i+1,i'}=f_{i+1,i}\circ f_{i,i'}$. 

Now, it remains to show that, with this particular construction, the resulting structure $M$ is a locally positively closed model of $\theory$. Let $B$ be a local model of $\theory$ and $h\map M\to B$ a homomorphism. Suppose that there is a positive formula $\varphi(x)$ and an element $a$ in $M$ such that $B\models_\lang \varphi(h(a))$. By construction, there is some $i\in\N$ and $\widetilde{a}$ in $M_i$ such that $a=f_i(\widetilde{a})$. Hence, there is $j\in \lambda_i$ such that $\widetilde{a}=a_j$ and $\varphi=\varphi_j$. As $h\map M\to B$ is a homomorphism, we have that $g=h\circ f_{i+1}\circ g^j_i\map M^j_i\to B$ is a homomorphism to a local model of $\theory$ such that $B\models_\lang\varphi_j(g(g^{j,0}_i(a_j)))$. Therefore, $M^{j+1}_i\models_\lang \varphi(g^{j+1,0}_i(\widetilde{a}))$, concluding that $M_{i+1}\models_\lang \varphi(f_{i+1,i}(\widetilde{a}))$, so $M\models_\lang \varphi(a)$. As $\varphi$, $a$, $B$ and $h$ are arbitrary, we conclude that $M$ is a locally positively closed model of $\theory$.
\end{proof}
\begin{rmk} \label{r:lowenheim skolem pc} Applying downwards L\"{o}wenheim{\hyp}Skolem in each step of the previous proof, we can take $|M^{j+1}_i|\leq |\lang|+|M^j_i|$ for each $i\in \omega$ and $j\in\lambda_i$. As $\lambda_i\leq |\lang|+|M_i|$, we can get $|M_{i+1}|\leq |M_i|$. Therefore, we can add $|M|\leq |\lang|+|N|$ to \cref{t:positively closed}.
\end{rmk}

\begin{defi} \label{d:denials} Let $\varphi$ be a positive formula. A \emph{denial} of $\varphi$ is a positive formula $\psi$ such that $\theory\models_\lang \neg \exists x\mathrel{} (\varphi\wedge\psi)$ --- note that since $\theory$ is closed under local implications and $\neg \exists x\mathrel{} (\varphi\wedge\psi)$ is negative, we get in fact that $\neg \exists x\mathrel{} (\varphi\wedge\psi)\in\theory$. We write $\theory\models_\lang\varphi\perp\psi$ or $\varphi\perp \psi$ if $\theory$ is clear from the context. \end{defi}

So far, local positive logic has shown similar behaviour to usual positive logic. The first difference appears in \cref{l:denials}. In usual positive logic, denials of positive formulas are positive formulas. In local positive logic, on the other hand, denials of local positive formulas are still positive formulas, but not necessarily local. This technical subtlety underlines many of the singularities that local positive logic has.   

\begin{lem} \label{l:denials} Let $\varphi\in\LFor^x_+(\lang)$, $M\models^\pc_\lang\theory$ and $a\in M^x$. Then, $M\not\models_\lang\varphi(a)$ if and only if $M\models_\lang\psi(a)$ for some primitive positive formula $\psi(x)$ with $\theory\models_\lang\varphi\perp\psi$. Furthermore, we may take $\psi$ local primitive positive when $x$ is pointed.
\end{lem}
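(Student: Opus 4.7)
The easy direction ($\Leftarrow$) is immediate: if $\psi\in\For_{+_0}^x(\lang)$ satisfies $\theory\models\varphi\perp\psi$ and $M\models\psi(a)$, then the negative sentence $\neg\exists x\,(\varphi\wedge\psi)$ lies in $\theory$ (by \cref{d:denials}) and hence holds in $M$; together with $M\models\psi(a)$ this forces $M\not\models\varphi(a)$.

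For the main direction ($\Rightarrow$), my plan is to run compactness on the positive atomic diagram of $M$ and then invoke local positive closedness. Suppose $M\not\models\varphi(a)$, and consider the $\lang(M)${\hyp}theory $T=\theory\cup\Th^{\at}_+(M/M)\cup\{\varphi(a)\}$, where $\Th^{\at}_+(M/M)$ denotes the set of atomic $\lang(M)${\hyp}sentences satisfied by $M$. I would first argue that $T$ is not locally satisfiable: any local $\lang(M)${\hyp}model $N_M$ of $T$ would, via $m\mapsto m^{N_M}$, yield an $\lang${\hyp}homomorphism $f\map M\to N$ (with $N$ the $\lang${\hyp}reduct of $N_M$) into a local model of $\theory$ satisfying $N\models\varphi(f(a))$; since $M\models^\pc\theory$, this $f$ must be a positive embedding, whence $M\models\varphi(a)$, contradicting the hypothesis. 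Every sentence of $T$ is (equivalent to) a $\Pi_1${\hyp}local one --- $\theory$ is negative, hence universal; $\Th^{\at}_+(M/M)$ is quantifier{\hyp}free; and $\varphi(a)$ is $\Sigma_0${\hyp}local, so trivially $\Pi_1${\hyp}local --- so \cref{t:local compactness} supplies a finite subset $T_0\subseteq T$ that is not locally satisfiable. Conjoining the atomic sentences from $T_0$ gives $\psi(a,c)$ for some quantifier{\hyp}free primitive positive $\psi(x,y)$ and additional parameters $c\in M^y$; generalising on the constants $a,c$ (which do not appear in $\lang$) yields $\theory\models\neg\exists x\exists y\,(\varphi(x)\wedge\psi(x,y))$, so $\psi'(x)\coloneqq\exists y\,\psi(x,y)$ is a primitive positive denial of $\varphi$ witnessed in $M$ by $c$.

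For the pointed refinement, I would localise each existential $\exists y_j$ using \cref{itm:axiom 5}. For each single variable $y_j$ of sort $s_j$, pick a term $t_j$ of sort $s_j$: if $s_j$ occurs in $x$, take $t_j=x_i$ for a suitable $i$; otherwise, the pointedness of $x$ supplies a constant of sort $s_j$ to be used as $t_j$. Axiom \cref{itm:axiom 5} applied to $c_j$ and $t_j^M$ yields a locality relation $\dd_j$ with $M\models\dd_j(c_j,t_j^M)$, so the atomic conjuncts $\dd_j(y_j,t_j)$ are in $\Th^{\at}_+(M/M)$ and can be added to $\psi$; replacing each unrestricted $\exists y_j$ by the localised $\exists y_j{\in}\dd_j(t_j)$ then produces a local primitive positive formula with the required properties (the denial relation is only strengthened by the extra conjuncts). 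The one delicate point of the argument is justifying the application of \cref{t:local compactness} to $T$, which reduces to the observation that $\varphi(a)$ --- despite using local existentials --- is a local sentence and therefore $\Sigma_0${\hyp}local, hence equivalent to a $\Pi_1${\hyp}local sentence via a vacuous universal quantifier.
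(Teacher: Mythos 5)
Your easy direction, the non{\hyp}satisfiability argument via positive closedness, and the pointed refinement all match the paper's proof. The gap is in the compactness--generalisation step. You apply \cref{t:local compactness} to $T$ inside the language $\lang(M)$ and obtain a finite $T_0$ that is not locally satisfiable \emph{as a set of $\lang(M)${\hyp}sentences}; from that statement alone you cannot ``generalise on the constants'' to get $\theory\models\neg\exists x,y\,(\varphi\wedge\psi)$. A local $\lang(M)${\hyp}structure must interpret \emph{every} constant of $M$ subject to axiom \cref{itm:axiom 4} for the extended bound function, so $T_0$ can fail to have a local $\lang(M)${\hyp}model for reasons unrelated to any conflict between $\varphi$ and $\Lambda$ over local $\lang${\hyp}models of $\theory$: a local $N\models\theory$ realising $\varphi(x)\wedge\Lambda(x,y)$ need not expand to a local $\lang(M)${\hyp}structure. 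Concretely, with $\Dtt=\{\dd_n\}_{n\in\N}$, $M=\Z$ and $\BoundConst_{\underline{n},\underline{m}}=\dd_{|n-m|}$, the two{\hyp}point local $\lang(\underline{0},\underline{10})${\hyp}structure in which the two named points satisfy $\dd_{10}$ and nothing smaller admits no interpretation of the constant $\underline{5}$ compatible with \cref{itm:axiom 4}, hence no expansion to a local $\lang(\Z)${\hyp}structure. In short, local satisfiability is not reflected by forgetting constants, so the implication from ``$T_0$ has no local $\lang(M)${\hyp}model'' to ``no local $\lang${\hyp}model of $\theory$ realises $\varphi\wedge\Lambda$'' is unsound as stated. (Even generalising only over $a,c$ already requires the atomic sentences $\BoundConst_{a_i,c_j}(a_i,c_j)$ to appear among the conjuncts of $\Lambda$.)

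The repair is precisely the detour the paper takes: since every sentence of $T$ is ($\Pi_1${\hyp})local, use \cref{l:locally satisfiable} to convert non{\hyp}local{\hyp}satisfiability of $T$ into ordinary non{\hyp}satisfiability of $T\wedge\theory_\loc(\lang(M))$, and apply ordinary first{\hyp}order compactness to \emph{that} set. The resulting finite unsatisfiable fragment explicitly contains the finitely many instances of \cref{itm:axiom 4} it uses; each such instance is an atomic sentence of $\Diag_{\at}(M)$ and must be absorbed into $\Lambda$ (enlarging the parameter tuple $c$ if necessary). After that, generalising on the new constants is ordinary first{\hyp}order reasoning yielding $\theory\wedge\theory_\loc(\lang)\mmodels\forall x,y\,\neg(\varphi\wedge\Lambda)$, and restricting to local models gives the denial. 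With this repair your argument essentially coincides with the paper's; your observation that $\varphi(a)$ can be kept as a $\Pi_1${\hyp}local sentence (rather than pre{\hyp}normalising $\varphi$ and naming its witnesses, as the paper does) is a harmless simplification, and is not where the delicacy lies.
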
 
\begin{proof} Obviously, if $M\models_\lang\psi(a)$ with $\psi\perp\varphi$, then $M\not\models_\lang\varphi(a)$. On the other hand, suppose that $M\not\models_\lang\varphi(a)$. Up to elementary equivalence, we have that 
\[\varphi(x)=\exists y\in\dd(t(x))\mathrel{} \theta(x,y),\] where $\theta(x,y)$ is quantifier free positive. Consider the natural expansion $\lang(M)$ given by adding constants (where $\BoundConst$ is extended in some natural way). Let $\underline{y}=(\underline{y}_1,\ldots,\underline{y}_n)$ be new constant symbols and consider the expansion $\lang(M,\underline{y})$ of $\lang(M)$ where $\BoundConst_{\underline{y},t^M(a)}=\dd$ --- and we also define using $\ast$ the bounds $\BoundConst_{y_i,y_j}$ and $\BoundConst_{y_i,c}$ for any constant $c$ in $\lang(M)$ not in $t^M(a)$. Consider the universal set of $\lang(M,\underline{y})${\hyp}sentences 
\[\Gamma\coloneqq \theory\wedge\Diag_{\at}(M)\wedge \theta(a,\underline{y}).\]
Note that $\Gamma$ is not locally satisfiable. Indeed, in order to reach a contradiction, suppose that $N_{M,\underline{y}}\models_\lang\Gamma$. Then, we get a homomorphism $h\map M\to N$ to a local model $N\models_\lang\theory$ such that $N\models_\lang\varphi(h(a))$ witnessed by $b=\underline{y}^{N_{\underline{y}}}$. However, as $M$ is a locally positively closed model of $\theory$, we conclude that $M\models_\lang\varphi(a)$, contradicting our initial hypothesis. 

As $\Gamma$ is universal, by \cref{l:locally satisfiable}, we conclude that $\Gamma\wedge \theory_\loc(\lang(M,\underline{y}))$ is not finitely satisfiable. Take $\Lambda(a,c)\subseteq \Diag_{\at}(M)$ finite, where $c$ are some possible extra parameters, such that 
\[\theory\wedge\theory_\loc\wedge \dd(t(a),\underline{y})\wedge \Lambda(a,c)\wedge \theta(a,\underline{y})\]
is not satisfiable. If $x$ is pointed, take the subtuple $a_c$ of $a$ on the sort of $c$ (adding constants from $\lang$ if necessary) and consider the local primitive positive formula $\psi(x)\coloneqq\exists z\in\BoundConst_{a_c,c}(a_c)\mathrel{} \bigwedge\Lambda(x,z)$. Otherwise, take $\psi(x)\coloneqq\exists z\mathrel{} \bigwedge\Lambda(x,z)$. Note that $M\models_\lang \psi(a)$, as witnessed by $c$. 

Finally, we claim that $\varphi\perp\psi$. Indeed, in order to reach a contradiction, suppose otherwise, so there is $N\models_\lang\theory$ and $a'\in N^x$ such that $N\models_\lang\psi(a')$ and $N\models_\lang\varphi(a')$. Then, there are $b'\in N^y$ and $c'\in N^z$ such that $N\models\Lambda(a',c')$, $N\models_\lang\theta(a',b')$ and $N\models_\lang \dd(t(a'),b')$, which contradicts that $\theory\wedge\theory_\loc\wedge \Lambda(a,c)\wedge \dd(t(a),\underline{y})\wedge\theta(a,\underline{y})$ is not satisfiable.
\end{proof}
\begin{coro} \label{c:positive substructures} Every positive substructure of a locally positively closed  model of $\theory$ is a locally positively closed model of $\theory$.
\end{coro}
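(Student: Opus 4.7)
The plan is to verify the three defining conditions of a local positively closed model of $\theory$: that $N$ is a local structure, that $N \models \theory$, and that $N$ is positively closed in $\Mod(\theory/\lang)$. The first two are quick consequences of $N \preceq^+ M$. For the first, I would invoke \cref{r:homomorphism and positive embeddings}, which already notes that positive substructures of local structures are local. For the second, the positive embedding $N \hookrightarrow M$ preserves positive formulas in both directions, hence negative sentences in both directions, so $N$ inherits $\theory$ from $M$.

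The main content is positive closedness. Given a homomorphism $f\colon N \to B$ into a local model $B \models \theory$, a positive formula $\varphi$, and $a \in N^x$ with $B \models \varphi(f(a))$, I want to derive $N \models \varphi(a)$. Using $N \preceq^+ M$ once more, it suffices to show $M \models \varphi(a)$. Suppose for contradiction $M \not\models \varphi(a)$. The strategy is to apply \cref{l:denials} to $M$ to obtain a positive formula $\psi$ with $M \models \psi(a)$ and $\theory \models \varphi \perp \psi$. Being positive, $\psi$ is preserved from $M$ back to $N$ by $N \preceq^+ M$, and then forward from $N$ to $B$ by $f$ being a homomorphism, yielding $B \models \varphi(f(a)) \wedge \psi(f(a))$; together with $B \models \theory$ this contradicts $\theory \models \varphi \perp \psi$.

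The principal obstacle is that \cref{l:denials} is stated for $\varphi \in \LFor^x_+(\lang)$ (local positive), whereas positive closedness is a condition on all positive formulas. For a general positive $\varphi(x) = \exists y\, \phi(x,y)$ with $\phi$ quantifier-free positive, I would reduce to the local positive case as follows: a witness $b \in B$ for $B \models \varphi(f(a))$ lies, by axiom \cref{itm:axiom 5}, within some locality ball $\dd(t(f(a)))$ where $t$ is an anchor term built from $a$ together with constants of $\lang$, giving a local positive formula $\chi(x) = \exists y \in \dd(t(x))\, \phi(x,y)$ with $B \models \chi(f(a))$. Applying the denial argument above to $\chi$ yields $N \models \chi(a)$ and hence $N \models \varphi(a)$. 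Setting up the anchor term $t$ is delicate when some sorts of $y$ are not covered either by the sort of $x$ or by existing constants of $\lang$, and may require first passing to a pointed expansion.
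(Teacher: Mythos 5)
Your argument is, at its core, the same as the paper's: reduce everything to the positively closed model $M$ via the positive substructure relation and decide $\varphi$ there using \cref{l:denials} (you run the denial step by contradiction where the paper runs it directly, but that is the identical argument). The one genuine gap is exactly the step you flag and leave unresolved: the localization of $\varphi(x)=\exists y\,\phi(x,y)$. Since $\lang$ is relational with constants, a term $t(x)$ of the sort of $y$ exists only when every single sort occurring in $y$ either occurs in $x$ or carries a constant symbol of $\lang$; otherwise your $\chi(x)=\exists y{\in}\dd(t(x))\,\phi(x,y)$ cannot even be written down. ``Passing to a pointed expansion'' is not a one{\hyp}line fix: adding constants changes $\theory$ and the bound function $\BoundConst$, you must decide how to interpret the new constants, and you must check that $M$ remains local positively closed for the expanded theory and that a denial produced in the expansion yields one over $\lang$. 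This can be made to work (essentially via \cref{l:positively closed and extensions}, naming elements of the substructure itself), but as stated it is a missing piece, not a detail.

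The paper sidesteps the whole issue by anchoring the witness with a \emph{parameter} carried as an extra free variable rather than with a term in $x$: pick an arbitrary $c\in A^y$ (a tuple of elements of the positive substructure, of the same sort as $y$), use \cref{itm:axiom 5} in $B$ to get $\dd^B(b,f(c))$ for some locality relation $\dd$, and form $\varphi'(x,z)=\exists y{\in}\dd(z)\,\phi(x,y)\in\LFor^{xz}_+(\lang)$, evaluated at $(a,c)$. The variable $z$ plays the role of your anchor $t(x)$, but it is instantiated by elements of $A$, so no constants and no expansion of the language are needed; \cref{l:denials} is then applied to $\varphi'$ at the tuple $(a,c)$, and the chain $B\not\models\psi(f(a),f(c))\Rightarrow A\not\models\psi(a,c)\Rightarrow M\not\models\psi(a,c)$ for every denial $\psi$ of $\varphi'$ gives $M\models\varphi'(a,c)$, hence $A\models\varphi(a)$. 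If you replace your anchor term by this extra tuple of free variables instantiated in $A$, your proof closes and coincides with the paper's.
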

\begin{proof} Let $M\models^\pc_\lang\theory$, $A$ be a positive substructure of $M$ and $f\map A\to B$ homomorphism to $B\models_\lang\theory$. Take $\varphi\in\For^x_+(\lang)$ and $a\in A^x$ arbitrary and suppose $B\models_\lang\varphi(f(a))$. Without loss of generality, $\varphi(x)=\exists y\mathrel{} \theta(x,y)$ where $\theta$ is quantifier free positive. Hence, there is $b\in B^y$ such that $B\models_\lang\theta(f(a),b)$. Take $c\in A^y$ arbitrary. By the locality axioms, we have $\dd^{B}(b,f(c))$ for some locality relation $\dd$ on the sort of $b$. Hence, $B\models_\lang \varphi'(f(a),f(c))$ with $\varphi'(x,z)=\exists y\in \dd(z)\mathrel{} \theta(x,y)\in\LFor^{xz}_+(\lang)$. 

Now, for any $\psi\perp\varphi'$, we have that $B\not\models_\lang\psi(f(a),f(c))$. Therefore, by \cref{r:homomorphism and positive embeddings}, $A\not\models_\lang\psi(a,c)$. As $A$ is a positive substructure of $M$, that means that $M\not\models_\lang\psi(a,c)$. As $\psi$ is arbitrary, by \cref{l:denials}, we conclude that $M\models_\lang\varphi'(a,c)$. Since $A$ is a positive substructure of $M$, we conclude that $A\models_\lang\varphi'(a,c)$. In particular, $A\models_\lang\varphi(a)$. As $\varphi$ and $a$ are arbitrary, we conclude that $f$ is a positive embedding. As $f$ and $B$ are also arbitrary, we conclude that $A\models^\pc_\lang\theory$.  
\end{proof}

\begin{defi} Let $\varphi(x)$ and $\psi(x)$ be positive formulas of $\lang$. We say that $\psi$ is an \emph{approximation} to $\varphi$ if every denial of $\psi$ is a denial of $\varphi$; we write $\theory\models_\lang \varphi\leq \psi$, or $\varphi\leq \psi$ if $\theory$ is clear from the context. We say that $\psi$ and $\varphi$ are \emph{complementary} if $\varphi$ approximates every denial of $\psi$; we write $\theory\models_\lang\varphi\toprel\psi$, or $\varphi\toprel \psi$ if $\theory$ is clear from the context. 
\end{defi}

\begin{lem} \label{l:approximation and complementary} Let $\varphi(x)$ and $\psi(x)$ be positive formulas of $\lang$.
\begin{enumerate}[label={\rm{(\arabic*)}}, ref={\rm{\arabic*}}, wide]
\item \label{itm:approximation and complementary:approximation} Suppose $\psi$ is a local positive formula. Then, $\theory\models_\lang\varphi\leq\psi$ if and only if $\theory\models^\pc_\lang \forall x\mathrel{} (\varphi(x)\rightarrow\psi(x))$. Furthermore, in this case, $\theory\premodelspc \forall x\mathrel{} (\varphi(x)\rightarrow\psi(x))$.
\item \label{itm:approximation and complementary:complementary} Suppose $\varphi$ and $\psi$ are local positive formulas. Then, $\theory\models_\lang\varphi\toprel\psi$ if and only if $\theory\models^\pc_\lang\forall x\mathrel{} (\varphi(x)\vee\psi(x))$. Furthermore, in this case, $\theory\premodelspc\forall x\mathrel{} (\varphi(x)\vee\psi(x))$.
\end{enumerate}
\end{lem}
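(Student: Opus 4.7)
The plan is to handle both items by a common template. For the forward direction (from the syntactic order to pc satisfaction), I convert a failure of $\psi$ or $\varphi$ at $a$ in a local positively closed model into a positive denial realised at $a$ via \cref{l:denials}, and then use the assumed approximation/complementarity to reach a contradiction. For the backward direction (from universal pc satisfaction to the order), I take any local model of $\theory$ realising the forbidden positive configuration, promote it to a local positively closed model via \cref{t:positively closed}, and use that positive formulas are preserved by homomorphisms (\cref{r:homomorphism and positive embeddings}) to transport the configuration; the hypothesis then yields the contradiction.

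Concretely, for (1): assuming $\varphi\leq\psi$, $M\models^\pc\theory$ and $M\models\varphi(a)\wedge\neg\psi(a)$, \cref{l:denials} produces a positive $\chi$ with $M\models\chi(a)$ and $\chi\perp\psi$; by assumption $\chi\perp\varphi$, contradicting $M\models\varphi(a)\wedge\chi(a)$. Conversely, if $\theory\models^\pc\forall x\,\varphi(x)\to\psi(x)$ and $\chi\perp\psi$, then any $M\models\theory$ realising $\varphi(a)\wedge\chi(a)$ admits, via \cref{t:positively closed}, a homomorphism $f\map M\to M'\models^\pc\theory$; preservation of positive formulas gives $M'\models\varphi(f(a))\wedge\chi(f(a))$, so the hypothesis forces $M'\models\psi(f(a))$, contradicting $\chi\perp\psi$; hence $\chi\perp\varphi$, i.e.~$\varphi\leq\psi$.

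For (2), the forward direction runs as follows: assuming $\varphi\toprel\psi$ and $M\models^\pc\theory$ with $M\not\models\varphi(a)\vee\psi(a)$, two applications of \cref{l:denials} yield positive $\chi_1\perp\varphi$ and $\chi_2\perp\psi$ with $M\models\chi_1(a)\wedge\chi_2(a)$; complementarity applied to $\chi_2\perp\psi$ gives $\chi_2\leq\varphi$, whence the denial $\chi_1\perp\varphi$ becomes a denial of $\chi_2$, contradicting $M\models\chi_1(a)\wedge\chi_2(a)$. The backward direction is analogous: given $\chi\perp\psi$ and $\eta\perp\varphi$, if $\eta\not\perp\chi$ then some local $M\models\theory$ realises $\eta\wedge\chi$ at some $a$, \cref{t:positively closed} yields $f\map M\to M'\models^\pc\theory$ with $M'\models\eta(f(a))\wedge\chi(f(a))$, and the hypothesis supplies $M'\models\varphi(f(a))\vee\psi(f(a))$ --- each disjunct contradicts one of the two $\perp$-relations.

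Finally, the ``furthermore'' clauses are obtained by running the same arguments non-locally: the classical denials lemma of positive logic replaces \cref{l:denials}, and the classical existence of non-local pc models replaces \cref{t:positively closed}. The translation is seamless because $\theory$ is closed under local implication among negative sentences, so any local $\perp$-relation is already a negative sentence of $\theory$ and hence holds in every (non-local) model of $\theory$; conversely, any non-local denial is a fortiori a local one. The main conceptual obstacle --- flagged just before \cref{l:denials} --- is that denials of local positive formulas are only positive and need not be local; the plan succeeds precisely because $\leq$ and $\toprel$ are defined in terms of \emph{all} positive denials, so the possible non-locality of the witnessing $\chi$'s is absorbed into the definitions themselves.
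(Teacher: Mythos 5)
Your proposal is correct and follows essentially the same route as the paper: the forward directions via \cref{l:denials}, the backward directions by pushing a witnessing local model into a positively closed one via \cref{t:positively closed} and \cref{r:homomorphism and positive embeddings}, and the ``furthermore'' clauses via the classical denials lemma together with the observation that denials are negative sentences belonging to $\theory$ itself. The only (cosmetic) difference is that in item (2) you apply the denials lemma twice and argue directly, where the paper instead routes both directions of (2) through item (1).
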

\begin{proof} \leavevmode
\begin{enumerate}[wide]
\item[{\rm{(\ref*{itm:approximation and complementary:approximation})}}] Suppose $\varphi\leq\psi$. Take $M\models^\pc_\lang\theory$ and $a\in \varphi(M)$ arbitrary. For any $\theta\perp\psi$, as $\theta\perp\varphi$, we have $M\not\models_\lang\theta(a)$. By \cref{l:denials}, we conclude then that $M\models_\lang \psi(a)$, so $a\in \psi(M)$. As $a$ is arbitrary, we conclude that $M\models_\lang\forall x\mathrel{} (\varphi\rightarrow\psi)$. As $M$ is arbitrary, we conclude $\theory\models^\pc_\lang\forall x\mathrel{} (\varphi\rightarrow\psi)$.

Suppose $\varphi\not\leq\psi$. There is then $\theta\perp \psi$ such that $\theta\not\perp\varphi$. As $\theta\not\perp\varphi$, there is $M\models_\lang\theory$ such that $M\models_\lang\exists x\mathrel{} (\theta\wedge\varphi)$. By \cref{t:positively closed}, there is a homomorphism $f\map M\to N$ to $N\models^\pc_\lang\theory$. By \cref{r:homomorphism and positive embeddings}, $N\models_\lang \exists x\mathrel{} (\theta\wedge\varphi)$. Hence, $\theta(N)\cap \varphi(N)\neq \emptyset$. As $\theta\perp\psi$, we have $\theta(N)\cap\psi(N)=\emptyset$. Therefore, we conclude that $\varphi(N)\not\subseteq \psi(N)$. In other words, $N\not\models_\lang\forall x\mathrel{} (\varphi\rightarrow\psi)$, so $\theory\not\models^\pc_\lang\forall x\mathrel{} (\varphi\rightarrow\psi)$.

Suppose now $\varphi\leq\psi$ and $M\premodelspc\theory$. Let $a\in\varphi(M)$ arbitrary, and assume towards a contradiction that $a\notin\psi(M)$. By \cite[Lemma 2.3(1)]{hrushovski2022definability} (or, for more original references, \cite[Lemma 1.26]{yaacov2003positive} and \cite[Lemma 14]{yaacov2007fondements}), there is some positive $\theta(x)$ such that $a\in\theta(M)$ and $\theory\premodels\neg\exists x\mathrel{} (\psi\wedge\theta)$. In particular, $\theory\models_\lang\neg\exists x\mathrel{} (\psi\wedge\theta)$ and thus $\theta\perp\psi$ which implies $\theta\perp\varphi$, that is $\neg\exists x\mathrel{} (\varphi\wedge\theta)\in\theory$. Since $a\in\varphi(M)\cap\theta(M)$, this contradicts $M\premodels\theory$. Therefore, we get that $a\in\psi(M)$ and so, since $M$ and $a$ are arbitrary, we conclude that $\theory\premodelspc\forall x\mathrel{} (\varphi(x)\rightarrow\psi(x))$.
\item[{\rm{(\ref*{itm:approximation and complementary:complementary})}}] Suppose $\varphi\toprel\psi$. Take $M\models^\pc_\lang\theory$ and $a\notin \psi(M)$ arbitrary. By \cref{l:denials}, we have $a\in \theta(M)$ for some $\theta\perp\psi$. As $\theta\leq\varphi$, by {\rm{(\ref*{itm:approximation and complementary:approximation})}}, we get $a\in \varphi(M)$. As $a$ is arbitrary, we conclude that $M\models_\lang\forall x\mathrel{} (\varphi\vee\psi)$. As $M$ is arbitrary, we conclude that $\theory\models^\pc_\lang\forall x\mathrel{} (\varphi\vee\psi)$.

Suppose $\varphi\not\toprel\psi$. There is then $\theta\perp\psi$ such that $\theta\not\leq\varphi$. By {\rm{(\ref*{itm:approximation and complementary:approximation})}}, there is then $M\models^\pc_\lang\theory$ such that $\theta(M)\not\subseteq \varphi(M)$. Hence, there is $a\in \theta(M)$ such that $a\notin \varphi(M)$. As $\theta\perp\psi$, we also have $a\notin \psi(M)$, so $M\not\models_\lang\forall x\mathrel{} (\varphi\vee\psi)$. In particular, $\theory\not\models^\pc_\lang\forall x\mathrel{} (\varphi\vee\psi)$.

Suppose now $\varphi\toprel\psi$ and $M\premodelspc\theory$. Let $a\notin\psi(M)$ arbitrary. Then, again by \cite[Lemma 2.3(1)]{hrushovski2022definability} (or, for more original references, \cite[Lemma 1.26]{yaacov2003positive} and \cite[Lemma 14]{yaacov2007fondements}), there is some positive $\theta(x)$ such that $a\in\theta(M)$ and $\theory\premodels\neg\exists x\mathrel{} (\psi\wedge\theta)$, which implies $\psi\perp\theta$. By assumption, this means $\theta\leq\varphi$, thus $M\premodels\forall x\mathrel{} (\theta(x)\rightarrow\varphi(x))$ by {\rm{(\ref*{itm:approximation and complementary:approximation})}}. Therefore we get $a\in\varphi(M)$ and so, since $M$ and $a$ are arbitrary, we conclude that $\theory\premodelspc\forall x\mathrel{} (\varphi(x)\vee\psi(x))$.
\qedhere
\end{enumerate}
\end{proof}

\begin{rmk} \label{r:approximation and complementary} In the case of weakly complete theories, it is easy to identify approximations and complements. If $M\models_\lang\forall x\mathrel{} (\varphi(x)\rightarrow \psi(x))$, then $\Th_-(M)\models_\lang \varphi(x)\leq \psi(x)$. Indeed, if $\Th_-(M)\models_\lang\theta\perp\psi$, then $M\models_\lang \neg\exists x\mathrel{}(\theta(x)\wedge\psi(x))$, so $M\models_\lang \neg\exists x\mathrel{} (\theta(x)\wedge\varphi(x))$, concluding that $\Th_-(M)\models_\lang \theta\perp\varphi$.

Similarly, if $M\models_\lang\forall x\mathrel{} (\varphi(x)\vee\psi(x))$, then $\Th_-(M)\models_\lang \varphi\toprel \psi$. Indeed, if $\Th_-(M)\models_\lang \theta\perp \psi$, then $M\models_\lang \neg\exists x\mathrel{} (\theta(x)\wedge\psi(x))$, so $M\models_\lang \forall x\mathrel{}(\theta(x)\rightarrow \varphi(x))$, concluding $\theory\models_\lang \theta\leq \varphi$.
\end{rmk}

\begin{defi}\label{d:full systems}
Let $\varphi\in\LFor^x_+(\lang)$. A \emph{full system of denials} of $\varphi$ in $\theory$ is a family $\Psi$ of denials of $\varphi$ such that, for any $M\models^\pc_\lang\theory$ and any $a\in M^x$, either $M\models_\lang\varphi(a)$ or $M\models_\lang\psi(a)$ for some $\psi\in \Psi$. A \emph{full system of approximations} of $\varphi$ in $\theory$ is a family $\Psi$ of approximations of $\varphi$ such that, for any $M\models^\pc_\lang\theory$ and any $a\in M^x$, if $M\models_\lang \psi(a)$ for all $\psi\in \Psi$, then $M\models_\lang\varphi(a)$.
\end{defi}

\begin{rmk} In this paper we work with negative formulas and negative local theories. In positive logic, it is common to work with \emph{h{\hyp}inductive formulas}\footnote{They are also called ``coherent sequents'' (e.g. in \cite{johnstone2002sketches}).} too. At the level of positively closed models, h{\hyp}inductive theories are equivalent to negative theories. In the local setting, the same applies. 

A \emph{local h{\hyp}inductive formula} is a formula of the form $\forall x\mathrel{}(\varphi(x)\rightarrow \psi(x))$ where $\varphi$ is a positive formula and $\psi$ is a local positive formula; in other words, local h{\hyp}inductive formulas are precisely the ones corresponding to approximations. A \emph{(local) h{\hyp}inductive local theory} is a set of local h{\hyp}inductive sentences closed under local implication.

Given a negative local theory $\theory$, we can consider its associated h{\hyp}inductive local theory $\theory^{\mathrm{h}}\coloneqq\{\forall x\mathrel{}(\varphi(x)\rightarrow \psi(x))\sth \theory\models_\lang\varphi\leq \psi$ with $\varphi$ positive formula and $\psi$ local positive formula$\}$. Note that $\theory^{\mathrm{h}}\models\theory$ as $\forall x\mathrel{}(\varphi\rightarrow \bot)\in \theory^{\mathrm{h}}$ if and only if $\neg\varphi\in \theory$, and $\forall x\mathrel{}(\varphi\rightarrow \bot)$ is elementarily equivalent to $\neg\varphi$. By \cref{l:approximation and complementary}(\ref{itm:approximation and complementary:approximation}), we conclude that $\theory^{\mathrm{h}}$ is indeed an h{\hyp}inductive local theory. Conversely, given an h{\hyp}inductive local theory $\Gamma$, we can consider its associated negative local theory $\Gamma_-\coloneqq\{\neg\varphi\sth \forall x\mathrel{}(\varphi\rightarrow\bot)\in \Gamma\}$, which is obviously closed under local implications.

Now, we can easily check that $(\theory^{\mathrm{h}})_-=\theory$ for any negative local theory $\theory$. Dually, we can easily check that $\Gamma\subseteq (\Gamma_-)^{\mathrm{h}}$ for any h{\hyp}inductive local theory $\Gamma$ too. By \cref{l:approximation and complementary}, we know that if $M\models^\pc\theory$, then $M\models \theory^{\mathrm{h}}$, while $\theory^{\mathrm{h}}\models\theory$. Hence, $\mathcal{M}^\pc(\theory)=\mathcal{M}^\pc(\theory^{\mathrm{h}})$. \end{rmk}

\section{Completeness} \label{s:completeness} Recall that, unless otherwise stated, we have fixed a local language $\lang$ and a locally satisfiable negative local theory $\theory$.
\begin{defi}\label{d:irreducibility} \leavevmode
\begin{enumerate}[wide]
\item[\textrm{(I)}] $\theory$ is \emph{irreducible} if, for any two positive formulas $\varphi(x)$ and $\psi(y)$ with $x$ and $y$ disjoint finite variables of the same sort, we have
\[\exists x\mathrel{}\varphi(x)\in\theory_+\mathrm{\ and\ }\exists y\mathrel{}\psi(y)\in\theory_+\Rightarrow \exists x y\mathrel{} (\varphi(x)\wedge\psi(y))\in\theory_+.\]
\item[\textrm{(UI)}] Let $\dd=(\dd_s)_{s\in\Sorts}$ be a choice of a locality relation symbols for each single sort. 
$\theory$ is \emph{$\dd${\hyp}irreducible} if, for any two positive formulas $\varphi(x)$ and $\psi(y)$ with $x$ and $y$ disjoint finite variables of the same sort $s$, 
\[\exists x\mathrel{}\varphi(x)\in\theory_+\mathrm{\ and\ }\exists y\mathrel{}\psi(y)\in\theory_+\Rightarrow\exists x y\mathrel{} (\dd_s(x,y)\wedge\varphi(x)\wedge\psi(y))\in\theory_+.\] 
We say that $\theory$ is \emph{uniformly irreducible} if it is $\dd${\hyp}irreducible for some $\dd$.
\item[\textrm{(LJCP)}] $\theory$ has the \emph{local joint continuation property} if for any two local models $A,B\models_\lang \theory$ there are homomorphisms $f\map A\to M$ and $g\map B\to M$ to a common local model $M\models_\lang \theory$.
\item[\textrm{(wC)}] $\theory$ is \emph{weakly complete} if $\theory=\Th_-(M)$ for some $M\models^\pc_\lang\theory$.
\item[\textrm{(C)}] $\theory$ is \emph{complete} if $\theory=\Th_-(M)$ for every $M\models^\pc_\lang\theory$.
\end{enumerate}
\end{defi}
\begin{rmk} Note that, by \cref{r:homomorphism and positive embeddings,t:positively closed}, our definition of weakly complete here coincides with the previously one studied. Indeed, say $\theory=\Th_-(M)$ with $M$ local structure. By \cref{t:positively closed}, there is $f\map M\to N$ with $N\models^\pc_\lang\theory=\Th_-(M)$. Now, by \cref{r:homomorphism and positive embeddings}, $\Th_-(N)\subseteq \Th_-(M)$ too, so $\Th_-(N)=\theory$.  
\end{rmk}

\begin{lem}\label{l:complete} A negative local theory $\theory$ is complete if and only if $\Th_-(M)=\Th_-(N)$ for every $M,N\models^\pc_\lang\theory$.
\end{lem}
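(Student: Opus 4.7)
The forward direction is immediate from the definition: if $\theory$ is complete, then for every $M,N\models^\pc\theory$ we have $\Th_-(M)=\theory=\Th_-(N)$, so the condition holds.

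For the converse, assume $\Th_-(M)=\Th_-(N)$ for all $M,N\models^\pc\theory$, and fix an arbitrary $M\models^\pc\theory$. The plan is to show the two inclusions between $\theory$ and $\Th_-(M)$. The inclusion $\theory\subseteq\Th_-(M)$ is trivial, since a local positively closed model of $\theory$ is in particular a local model of $\theory$, hence satisfies all negative sentences of $\theory$.

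The nontrivial inclusion $\Th_-(M)\subseteq\theory$ is the main point. Since $\theory$ is a negative local theory (hence closed under local implication), to show that a negative sentence $\varphi\in\Th_-(M)$ belongs to $\theory$ it suffices to check $N\models\varphi$ for every local model $N\models\theory$. Given such an $N$, the key step is to apply \cref{t:positively closed} to obtain a homomorphism $f\map N\to N'$ to some $N'\models^\pc\theory$. The hypothesis then yields $\Th_-(N')=\Th_-(M)$, so $\varphi\in\Th_-(N')$, i.e. $N'\models\varphi$. Finally, using \cref{r:homomorphism and positive embeddings}, the homomorphism $f$ preserves the positive sentence $\neg\varphi$'s negation; more precisely, if $N\not\models\varphi$ then $N\models\neg\varphi$ (a positive sentence), whence $N'\models\neg\varphi$ by preservation, contradicting $N'\models\varphi$. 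Thus $N\models\varphi$, as required.

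There is no real obstacle here; the whole argument essentially amounts to combining the existence of a positively closed ``cover'' of any local model (\cref{t:positively closed}) with the fact that homomorphisms reflect negative formulas. The only subtlety to watch is that $\theory$ must be used as a local theory, so membership in $\theory$ is certified by local implication rather than by arbitrary implication.
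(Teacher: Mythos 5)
Your proof is correct and follows essentially the same route as the paper's: both directions reduce to combining \cref{t:positively closed} (every local model of $\theory$ maps homomorphically into a local positively closed model) with the preservation of positive sentences under homomorphisms (\cref{r:homomorphism and positive embeddings}). The paper phrases the nontrivial inclusion dually, by showing $M\models\varphi$ for every $\varphi\in\theory_+$ (hence $M\models\theory_\pm$), but this is the contrapositive of your argument and uses the same ingredients.
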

\begin{proof} One implication is obvious. On the other hand, take $M\models^\pc_\lang\theory$ and $\varphi\in\theory_+$ arbitrary. Then, there is $A\models_\lang \theory$ witnessing $\varphi\in\theory_+$. By \cref{t:positively closed}, there is a homomorphism $f\map A\to N$ to $N\models^\pc_\lang \theory$. By \cref{r:homomorphism and positive embeddings}, $N\models_\lang \varphi$, so $N\not\models_\lang\neg\varphi$. By hypothesis, $\Th_-(N)=\Th_-(M)$, so $M\not\models_\lang\neg\varphi$, concluding $M\models_\lang \varphi$. As $\varphi$ is arbitrary, we conclude that $M\models_\lang\theory_{\pm}$, so $\theory$ is complete. 
\end{proof}

\begin{theo}\label{t:local irreducibility} The following implications are true: 
\[\mathrm{(UI)}\Rightarrow\mathrm{(LJCP)}\Rightarrow\mathrm{(C)}\Rightarrow \mathrm{(wC)}\Rightarrow\mathrm{(I)}.\]
\end{theo}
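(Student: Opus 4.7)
The plan is to prove the four implications in turn, bundling the three easy ones and saving the bulk of the argument for (UI) $\Rightarrow$ (LJP). Throughout I use that a positive sentence $\chi$ belongs to $\theory_+$ if and only if some local model of $\theory$ satisfies it, which in the presence of a fixed $M \models^\pc \theory$ with $\theory = \Th_-(M)$ is equivalent to $M \models \chi$.

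For (wC) $\Rightarrow$ (I), fixing $M \models^\pc \theory$ with $\theory = \Th_-(M)$ and $\exists x\,\varphi(x), \exists y\,\psi(y) \in \theory_+$ with $x, y$ disjoint, independent witnesses in $M$ yield $M \models \exists x, y\, \varphi(x) \wedge \psi(y)$, hence this sentence lies in $\theory_+$. For (C) $\Rightarrow$ (wC), local satisfiability of $\theory$ together with \cref{t:positively closed} produces some $M \models^\pc \theory$, and completeness gives $\theory = \Th_-(M)$. For (LJP) $\Rightarrow$ (C), I would invoke \cref{l:complete} to reduce to the claim that any $M, N \models^\pc \theory$ have $\Th_-(M) = \Th_-(N)$: (LJP) supplies homomorphisms $f \map M \to P$ and $g \map N \to P$ with $P \models \theory$, and positive closedness of both $M$ and $N$ upgrades $f, g$ to positive embeddings, so all three structures agree on positive (hence negative) sentences.

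The main step is (UI) $\Rightarrow$ (LJP), which I would prove by local compactness. Fix $\dd = (\dd_s)_{s \in \Sorts}$ witnessing (UI) and let $A, B \models \theory$. Expand $\lang$ to $\lang' \coloneqq \lang(C_A \cup C_B)$ with disjoint fresh constants naming the elements of $A$ and $B$; extend $\BoundConst$ by picking, for each pair of constants both in $C_A$, any locality relation realised by their interpretations in $A$, analogously for $C_B$, and setting $\BoundConst_{c, c'} \coloneqq \dd_s$ whenever $c \in C_A \cap A^s$ and $c' \in C_B \cap B^s$. Consider
\[\Gamma \coloneqq \theory \cup \Diag_{\at}(A) \cup \Diag_{\at}(B) \cup \{\dd_s(c, c') : c \in C_A \cap A^s,\ c' \in C_B \cap B^s,\ s \in \Sorts\},\]
a set of $\Pi_1${\hyp}local $\lang'${\hyp}sentences, so by \cref{t:local compactness} it suffices to check finite local satisfiability. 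A finite fragment determines quantifier{\hyp}free positive formulas $\varphi_A(x), \varphi_B(y)$ (witnessed by $a \subset A, b \subset B$) and finitely many matched single{\hyp}sort components $(x_{i_k}, y_{j_k})_{k \le m}$ on which $\dd${\hyp}relations must hold. Setting $x' \coloneqq (x_{i_1}, \ldots, x_{i_m})$, $y' \coloneqq (y_{j_1}, \ldots, y_{j_m})$ (of common sort $s = (s_{i_1 j_1}, \ldots, s_{i_m j_m})$) and $\tilde\varphi_A(x') \coloneqq \exists (x \setminus x')\, \varphi_A(x)$, $\tilde\varphi_B(y') \coloneqq \exists (y \setminus y')\, \varphi_B(y)$, the structures $A, B$ witness $\exists x'\,\tilde\varphi_A, \exists y'\,\tilde\varphi_B \in \theory_+$, so (UI) yields $\exists x', y'\, \dd_s(x', y') \wedge \tilde\varphi_A(x') \wedge \tilde\varphi_B(y') \in \theory_+$. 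Any witnessing local model realises the finite fragment of $\Gamma$ (interpret the new constants by the witnesses), and local compactness then produces $P \models \Gamma$, whose canonical interpretation of $C_A, C_B$ supplies the desired homomorphisms $A \to P$ and $B \to P$. The main obstacle will be the bookkeeping around extending $\BoundConst$ coherently to $\lang'$ so that $\Gamma$ really sits in a genuine local language, together with isolating the matched{\hyp}sort subtuples $x', y'$ so that (UI) applies in its stated form.
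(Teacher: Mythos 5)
Your proposal is correct and follows essentially the same route as the paper: the three easy implications are handled with \cref{l:complete}, \cref{t:positively closed} and \cref{r:homomorphism and positive embeddings} exactly as in the text, and for (UI)$\Rightarrow$(LJP) you build the same expansion by constants for $A$ and $B$ with $\BoundConst$ set to $\dd_s$ on cross pairs, take $\Gamma=\theory\wedge\Diag_{\at}(A)\wedge\Diag_{\at}(B)$, and verify finite local satisfiability via $\dd$-irreducibility before reading off the two homomorphisms from a local model of $\Gamma$. The only cosmetic difference is that you quantify out the unmatched components of the finite fragment rather than padding the tuples to a common sort, which is a legitimate (arguably cleaner) way to put the fragment into the form required by (UI).
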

\begin{proof} \leavevmode
\begin{enumerate}[wide]
\item[\textrm{(wC)}$\Rightarrow$\textrm{(I)}] Suppose $\theory$ is weakly complete. Say $\exists x\mathrel{} \varphi(x)\in\theory_+$ and $\exists y\mathrel{}\psi(y)\in\theory_+$ with $x$ and $y$ on the same sort. Let $M$ be a local structure with $\theory=\Th_-(M)$. Then, $M\models_\lang \theory_{\pm}$, so there are $a$ and $b$ in $M$ such that $M\models_\lang \varphi(a)$ and $M\models_\lang \psi(b)$. Thus, $M\models_\lang \exists x y\mathrel{} (\varphi(x)\wedge\psi(y))$, so $\exists x y\mathrel{} (\varphi(x)\wedge\psi(y))\in\theory_+$. As $\varphi$ and $\psi$ are arbitrary, we conclude that $\theory$ is irreducible.
\item[\textrm{(C)}$\Rightarrow$\textrm{(wC)}] Trivial.
\item[\textrm{(LJCP)}$\Rightarrow$\textrm{(C)}] Let $A,B\models^\pc_\lang\theory$. By the local joint continuation property, there are homomorphisms $f\map A\to M$ and $g\map B\to M$ to a common local model $M\models_\lang\theory$. As they are locally positively closed, $f$ and $g$ are positive embeddings. By \cref{r:homomorphism and positive embeddings}, we conclude $\Th_-(A)=\Th_-(M)=\Th_-(B)$. By \cref{l:complete}, we conclude.
\item[\textrm{(UI)}$\Rightarrow$\textrm{(LJCP)}] Let $\dd=(\dd_s)_{s\in\Sorts}$ be a choice of locality relation symbols such that $\theory$ is $\dd${\hyp}irreducible. Let $A,B\models_\lang\theory$. Consider the expansion $\lang(A,B)$ of $\lang$: we extend $\BoundConst$ to pairs of elements of $A$ or pairs of elements of $B$ in the natural way and, for $a\in A^s$ and $b\in B^s$, we take $\BoundConst_{a,b}=\dd_s$. Consider 
\[\Gamma=\theory\wedge\Diag_{\at}(A)\wedge\Diag_{\at}(B).\] 

We prove that $\Gamma\wedge\theory_\loc(\lang(A,B))$ is finitely satisfiable. Take a finite subset $\Lambda\subseteq \Gamma\wedge \theory_\loc(\lang(A,B))$. Then, there are finite subsets of atomic formulas $\Theta(x)$ and $\Psi(y)$, with $x$ and $y$ of the same sort $s$, and tuples $a\in A^s$ and $b\in B^s$, with $A\models_\lang\Theta(a)$ and $B\models_\lang\Psi(b)$, such that $\Lambda\subseteq \theory\wedge\theory_\loc\wedge \Theta(a)\wedge\Psi(b)\wedge \dd_s(a,b)$. Write $\theta=\bigwedge\Theta$ and $\psi=\bigwedge \Psi$. As $A\models_\lang\theta(a)$ and $B\models_\lang\psi(b)$ with $A,B\models_\lang\theory$, it follows that $\exists x\mathrel{}\theta(x),\exists y\mathrel{}\psi(y)\in\theory_+$. By $\dd${\hyp}irreducibility, we have $\exists x y\mathrel{} (\dd_s(x,y)\wedge\theta(x)\wedge\psi(y))\in\theory_+$, so there is a local model $M\models_\lang\theory$ such that $M\models_\lang\exists x y\mathrel{} (\dd_s(x,y)\wedge\theta(x)\wedge\psi(y))$. 
Thus, $\Lambda$ is satisfiable. As $\Lambda$ is arbitrary, we conclude that $\Gamma\wedge\theory_\loc(\lang(A,B))$ is finitely satisfiable. By compactness and \cref{l:locally satisfiable}, we conclude that $\Gamma$ is locally satisfiable. 

Let $M_{A,B}$ be a local model of $\Gamma$ and $M$ its $\lang${\hyp}reduct, so $M$ is a local model of $\theory$ and the maps $f\map A\to M$ and $g\map B\to M$ given by $f(a)=a^{M_{A,B}}$ and $g(b)=b^{M_{A,B}}$ are homomorphisms. Thus, $\theory$ has the local joint continuation property.\qedhere
\end{enumerate}
\end{proof}

\begin{theo} \label{t:local irreducibility in pointed languages} The following equivalences are true in pointed local languages:
\[\mathrm{(LJCP)}\Leftrightarrow\mathrm{(C)}\Leftrightarrow\mathrm{(wC)}\Leftrightarrow\mathrm{(I)}.\]
\end{theo}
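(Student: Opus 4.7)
The plan is to close the cycle of implications from \cref{t:local irreducibility} by proving $\mathrm{(I)} \Rightarrow \mathrm{(LJP)}$ in a pointed local language. So fix $\theory$ irreducible, let $A, B \models \theory$, and aim to build homomorphisms $f \map A \to M$ and $g \map B \to M$ into a common local model of $\theory$. The strategy follows the $\mathrm{(UI)} \Rightarrow \mathrm{(LJP)}$ proof: work in the expansion $\lang(A,B)$, consider $\Gamma = \theory \wedge \Diag_\at(A) \wedge \Diag_\at(B)$, show it is locally satisfiable by \cref{l:locally satisfiable} and compactness, and read off $f, g$ from interpretations of the new constants.

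The main difference from the $\mathrm{(UI)}$ case is that irreducibility gives us no prefixed locality relation $\dd_s$ with which to bound mixed pairs $(a,b) \in A^s \times B^s$ inside $\theory_\loc(\lang(A,B))$. Pointedness supplies the substitute. For each single sort $s$ fix a constant $c_s$ of $\lang$; for every parameter $\alpha$ from $A \cup B$ choose a locality relation $\dd_\alpha$ with $\dd_\alpha(\alpha, c_{s_\alpha})$ true in its home structure, and declare $\BoundConst_{\alpha,\beta} = \dd_\alpha \ast \dd_\beta$ when $\alpha, \beta$ are in the same sort. Thus the locality axiom $\BoundConst_{a,b}(a,b)$ becomes a consequence (via \cref{itm:axiom 3}) of $\dd_a(a, c_s) \wedge \dd_b(b, c_s)$ once these are forced to hold.

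The verification of finite satisfiability of $\Gamma \wedge \theory_\loc(\lang(A,B))$ then proceeds as follows: take a finite subset $\Lambda$; it is contained in $\theory \wedge \theory_\loc \wedge \Phi(a) \wedge \Psi(b)$ for finite sets of atomic formulas $\Phi, \Psi$ and tuples $a \in A, b \in B$. Enlarge $\Phi$ to include the locality clauses $\dd_{a_i}(x_i, c_{s_{a_i}})$ (and similarly $\Psi$), and set $\phi = \bigwedge \Phi$, $\psi = \bigwedge \Psi$. Since $A \models \phi(a)$ and $B \models \psi(b)$, both $\exists x\, \phi(x)$ and $\exists y\, \psi(y)$ are in $\theory_+$. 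To apply irreducibility, whose statement demands $x$ and $y$ of the same composite sort, use pointedness again to pad: replace $\phi(x)$ by $\phi'(x, y') \coloneqq \phi(x) \wedge \bigwedge_j y'_j = c_{s_j}$ (with $y'$ of the sort of $y$), and symmetrically $\psi'(x', y)$. These are still in $\theory_+$ and their variables now share a common composite sort, so irreducibility yields $\exists x, y\, \phi(x) \wedge \psi(y) \in \theory_+$. A witness model $M \models \theory$ then satisfies $\Lambda$: the $A$- and $B$-diagrams hold by construction, and every mixed $\BoundConst_{a_i, b_j}(a_i, b_j)$ follows from \cref{itm:axiom 3} applied to $\dd_{a_i}(x_i, c)$ and $\dd_{b_j}(y_j, c)$.

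The hard part is precisely the bookkeeping around the bound function and the sort constraint in the definition of irreducibility; once pointedness is used twice (to anchor every element to a constant for bounds, and to pad variable sorts for the application of (I)), compactness and \cref{l:locally satisfiable} deliver a local model $M_{A,B}$ of $\Gamma$, and the assignments $a \mapsto a^{M_{A,B}}$, $b \mapsto b^{M_{A,B}}$ give the required homomorphisms into the $\lang$-reduct $M$.
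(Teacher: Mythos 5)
Your proof is correct and follows essentially the same route as the paper: reduce to $\mathrm{(I)}\Rightarrow\mathrm{(LJP)}$ via \cref{t:local irreducibility}, expand to $\lang(A,B)$ with mixed bounds $\BoundConst_{a,b}$ obtained by composing bounds to a constant of the relevant sort, and verify finite satisfiability of $\theory\wedge\Diag_{\at}(A)\wedge\Diag_{\at}(B)\wedge\theory_\loc(\lang(A,B))$ before applying compactness and \cref{l:locally satisfiable}. The only difference is that you spell out the bookkeeping (anchoring parameters to constants and padding tuples so that irreducibility applies to variables of a common sort) that the paper compresses into ``an argument similar to the one used for $\mathrm{(UI)}\Rightarrow\mathrm{(LJP)}$''.
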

\begin{proof} By \cref{t:local irreducibility}, we only need to prove the implication \textrm{(I)}$\Rightarrow$\textrm{(LJCP)}. Let $A,B\models_\lang\theory$. Consider the expansion $\lang(A,B)$ of $\lang$: we extend $\BoundConst$ to pairs of elements of $A$ or pairs of elements of $B$ in the natural way and, for $a\in A^s$ and $b\in B^s$, we take $\BoundConst_{a,b}$ given by $\ast$ from $\BoundConst_{a,c}$ and $\BoundConst_{b,c}$ with $c$ (tuple of) constants of the sort of $a$ and $b$. Note that here the pointedness assumption is crucial as, otherwise, we cannot well{\hyp}define the expansion $\lang(A,B)$. Consider 
\[\Gamma=\theory\wedge\Diag_{\at}(A)\wedge\Diag_{\at}(B).\]
We prove that $\Gamma\wedge\theory_\loc(\lang(A,B))$ is finitely satisfiable with an argument similar to the one used in \cref{t:local irreducibility} for {\rm{(UI)}$\Rightarrow$\rm{(LJCP)}}. By compactness and \cref{l:locally satisfiable}, we conclude that $\Gamma$ is locally satisfiable. 

Let $M_{A,B}$ be a local model of $\Gamma$ and $M$ its $\lang${\hyp}reduct, so $M$ is a local model of $\theory$ and the maps $f\map A\to M$ and $g\map B\to M$ given by $f(a)=a^{M_{A,B}}$ and $g(b)=b^{M_{A,B}}$ are homomorphisms. Thus, $\theory$ has the local joint continuation property.
\end{proof}
\begin{lem} \label{l:uniform irreducibility} A negative local theory $\theory$ is uniformly irreducible if and only if it is weakly complete and all its locally positively closed models are balls, i.e. for any $M\models^\pc_\lang\theory$ and sort $s$ there are $o\in M^s$ and a locality relation $\dd$ on sort $s$ such that $M^s=\dd(o)$.
\end{lem}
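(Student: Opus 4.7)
The plan is to prove both directions by exploiting a strong amalgamation property tied to uniform irreducibility. For the easier ``$\Leftarrow$'' direction, I will use the pc witness of weak completeness to directly realise the required witnesses. For the harder ``$\Rightarrow$'' direction, I will amalgamate $M$ with a second copy of itself and pull the resulting locality information back through positive embeddings.

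For ``$\Leftarrow$'', fix by weak completeness some $M_0 \models^\pc \theory$ with $\theory = \Th_-(M_0)$, and, for each single sort $s$ with $M_0^s \neq \emptyset$, use the ball hypothesis to pick $o_s \in M_0^s$ and $\dd_s^0 \in \Dtt^s$ with $M_0^s = \dd_s^0(o_s)$; set $\dd_s \coloneqq \dd_s^0 \ast \dd_s^0$. Given positive $\varphi(x), \psi(y)$ of sort $s$ with $\exists x\, \varphi, \exists y\, \psi \in \theory_+$, weak completeness produces witnesses $a, b \in M_0$. Coordinate{\hyp}wise, $a_i$ and $b_i$ both lie in $\dd_{s_i}^0(o_{s_i})$, so symmetry (\cref{itm:axiom 1}) and composition (\cref{itm:axiom 3}) give $M_0 \models \dd_{s_i}(a_i, b_i)$. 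Hence $\exists x, y\, \dd_s(x, y) \wedge \varphi(x) \wedge \psi(y) \in \theory_+$, establishing $\dd$-irreducibility.

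For ``$\Rightarrow$'', assume $\theory$ is $\dd$-irreducible with $\dd = (\dd_s)_s$. Weak completeness is already known via \cref{t:local irreducibility}. Fix $M \models^\pc \theory$ and a single sort $s$ with $M^s \neq \emptyset$, and pick any $o \in M^s$. The key step is to re-run the construction from the proof of (UI)$\Rightarrow$(LJP) in \cref{t:local irreducibility} applied to $A = B = M$ with distinct constants for the two copies and with the bound between any $a$ in the $A$-copy and any $b$ in the $B$-copy set to $\dd_s$. That construction produces a local model $M' \models \theory$ with homomorphisms $f, g \map M \to M'$ such that $M' \models \dd_s(f(a), g(b))$ for all $a, b \in M^s$, since \cref{itm:axiom 4} forces the bound. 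By local positive closedness of $M$, both $f$ and $g$ are positive embeddings.

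To conclude, fix any $a \in M^s$: taking the pair $(a, a)$ yields $M' \models \dd_s(f(a), g(a))$, and taking $(o, a)$ yields $M' \models \dd_s(f(o), g(a))$; combining via symmetry and \cref{itm:axiom 3} gives $M' \models (\dd_s \ast \dd_s)(f(a), f(o))$. Since $f$ is a positive embedding and $\dd_s \ast \dd_s \in \Dtt^s$ is an atomic formula, this pulls back to $M \models (\dd_s \ast \dd_s)(a, o)$. Hence $M^s \subseteq (\dd_s \ast \dd_s)(o)$, establishing the ball property with locality relation $\dd_s \ast \dd_s$. I expect the main obstacle to be the forward direction, specifically noticing that UI gives a strong form of LJP that $\dd_s$-controls arbitrary pairs across the two structures, and that applying this to $M$ with itself, combined with positive embeddings from pc-ness, is exactly what is needed to transfer the ``double step'' $\dd_s \ast \dd_s$ locality back from the amalgam to $M$.
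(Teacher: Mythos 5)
Your proof is correct, and your ``$\Leftarrow$'' direction is essentially the paper's (witness both formulas in the pc model realising $\theory_\pm$, then use \cref{itm:axiom 1} and \cref{itm:axiom 3} to bound the witnesses by $\dd_s^0\ast\dd_s^0$). Your ``$\Rightarrow$'' direction, however, takes a genuinely different route. The paper argues internally: if some pair $(a,b)\in M^s$ failed $\dd_s\ast\dd_s$, then by \cref{l:denials} there would be a positive $\psi(x,y)$ with $M\models\psi(a,b)$ and $\psi\perp\dd_s\ast\dd_s$; applying $\dd$-irreducibility to $\exists x,y\,\psi$ and the trivial formula $\exists z,w\, z=w$ then forces $\exists x,y\,\psi\wedge\dd_s\ast\dd_s(x,y)\in\theory_+$, a contradiction. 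You instead amalgamate $M$ with itself via the (UI)$\Rightarrow$(LJP) construction, read off $\dd_s(f(a),g(b))$ from \cref{itm:axiom 4} in the expanded language, and pull $\dd_s\ast\dd_s(a,o)$ back through the positive embedding $f$ (your two-step composition through $g(a)$ is exactly right, since the cross-bounds relate the $f$-image to the $g$-image, not $f$ to itself). Both arguments land on the same radius $\dd_s\ast\dd_s$. The paper's version is shorter and self-contained given \cref{l:denials}; yours avoids \cref{l:denials} entirely at the cost of re-running the compactness/amalgamation argument, and it makes explicit that uniform irreducibility yields a $\dd$-controlled self-amalgam --- a slightly stronger structural fact than what is strictly needed. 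The only caveats are cosmetic and shared with the paper: the empty-sort edge case in the statement, and the tuple-padding technicalities already implicit in the paper's own proof of (UI)$\Rightarrow$(LJP), which you inherit by citing that construction.
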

\begin{proof} Let $\dd=(\dd_s)_{s\in\Sorts}$ be a choice of locality relations such that $\theory$ is $\dd${\hyp}irreducible. By \cref{t:local irreducibility}, we have that $\theory$ is weakly complete. On the other hand, let $M\models^\pc_\lang\theory$. We claim that $M\models_\lang \dd_s\ast\dd_s(a,b)$ for all $a,b\in M^s$. Otherwise, by \cref{l:denials}, there is a positive formula $\psi(x,y)$ with $M\models_\lang\psi(a,b)$ and $\theory\models_\lang \psi\perp \dd_s\ast\dd_s$. Hence, $\exists x y\mathrel{} \psi(x,y)\in \theory_+$. At the same time, $\exists z w\mathrel{} z=w\in\theory_+$ for $z,w$ of the same sort as $x,y$, so $\exists x y z w\mathrel{}(\psi(x,y)\wedge\dd_s(x,z)\wedge\dd_s(y,w)\wedge z=w)\in\theory_+$ by $\dd${\hyp}irreducibility, concluding $\exists x y\mathrel{}(\psi(x,y)\wedge\dd_s\ast\dd_s(x,y))\in\theory_+$ by \cref{itm:axiom 1,itm:axiom 3}, which contradicts $\psi\perp \dd_s\ast\dd_s$. 

Conversely, suppose $\theory$ is weakly complete and every locally positively closed model of $\theory$ is a ball. Pick $M\models^\pc_\lang\theory$ with $\theory=\Th_-(M)$. Let $o=(o_s)_{s\in\Sorts}$  be a choice of elements for each sort of $M$ and $\dd=(\dd_s)_{s\in\Sorts}$ a choice of locality relations such that $M^s=\dd_s(o_s)$ for each $s\in \Sorts$. Say $\exists x\mathrel{} \varphi(x)\in\theory_+$ and $\exists y\mathrel{} \psi(y)\in \theory_+$ with $\varphi,\psi$ positive formulas and $x$ and $y$ disjoint finite variables on the same sort. Then, $M\models_\lang\varphi(a)$ and $M\models_\lang\psi(b)$ for some $a,b\in M^s$. By \cref{itm:axiom 1,itm:axiom 3}, we get $M\models_\lang \varphi(a)\wedge\psi(b)\wedge\dd_s\ast\dd_s(a,b)$, so $\exists x y\mathrel{}(\varphi(x)\wedge\psi(y)\wedge\dd_s\ast\dd_s(x,y))\in\theory_+$. In conclusion, $\theory$ is $\dd\ast\dd${\hyp}irreducible, where $\dd\ast\dd=(\dd_s\ast\dd_s)_{s\in\Sorts}$.  
\end{proof}
\begin{rmk} \label{r:uniform irreducibility uniform bound} It is worth noting that in \cref{l:uniform irreducibility} the left{\hyp}to{\hyp}right implication is actually stronger. We have shown that if $\theory$ is uniformly irreducible, then all its local positively closed models are balls of uniformly bounded size, i.e. for any sort $s$ there is a locality relation $\dd$ on $s$ such that for any $M\models^\pc_\lang\theory$ there is $o\in M^s$ with $M^s=\dd(o)$.
\end{rmk}

\begin{ex} \label{e:local irreducibility} All the implications of \cref{t:local irreducibility} are strict:
\begin{enumerate}[label={\rm{(\arabic*)}}, ref={\rm{\arabic*}}, itemsep=5pt, wide]
\item {\textrm{(wC)}$\not\Rightarrow$\textrm{(C)}} \label{itm:example local irreducibility:wC does not imply C} Consider the one{\hyp}sorted local language $\lang$ with locality predicates $\Dtt=\{\dd_n\}_{n\in \N}$ (with the usual ordered monoid structure induced by the natural numbers) and unary predicates $\{P_n,Q_n\}_{n\in\N}$. Consider the $\lang${\hyp}structure $Z$ with universe $\Z$ given by $Z\models_\lang \dd_n(x,y)\Leftrightarrow |x-y|\leq n$, $Z\models_\lang P_n(x)\Leftrightarrow x\geq n$ and $Z\models_\lang Q_n(x)\Leftrightarrow x\leq -n$ for each $n\in\N$. Let $\theory$ be its negative theory, which is obviously a local theory as $Z$ is a local structure. 

Let $I$ be the structure whose universe is a singleton $\{\infty\}$ with $I\models P_n(\infty)$ and $I\models\neg Q_n(\infty)$ for all $n\in\N$. Similarly, let $J$ be the structure whose universe is a singleton $\{-\infty\}$ with $J\models Q_n(-\infty)$ and $J\models\neg P_n(-\infty)$ for all $n\in\N$. Both are local models of $\theory$ (as they are substructures of some non{\hyp}local elementary extension of $Z$). 

By \cref{l:local positive atomic are positively closed}, $Z\models^\pc_\lang\theory$. Also, as $I$ and $J$ are singletons, it easily follows that $I,J\models^\pc_\lang\theory$. Now, $\Th_-(Z)$, $\Th_-(I)$ and $\Th_-(J)$ are obviously different, so $\theory$ is weakly complete but not complete. 

One can further check that $Z$, $I$ and $J$ are all the locally positively closed models of $\theory$ up to isomorphism. We leave it as an exercise. 
\item \textrm{(LJCP)}$\not\Rightarrow$\textrm{(UI)} \label{itm:example local irreducibility:LJCP does not imply UI} Let $\lang$ and $Z$ be as in the previous example. Consider the expansion given by adding a constant for $0$ and $\theory=\Th_-(Z/0)$ the negative theory of $Z$ with a constant for $0$. By \cref{t:local irreducibility in pointed languages}, we know that $\theory$ satisfies the local joint continuation property. On the other hand, for any locality relation $\dd_n$, we have that $\theory\models_\lang \neg \exists x y\mathrel{} (\dd_n(x,y)\wedge P_n(x)\wedge Q_n(y))$. Therefore, $\theory$ is not uniformly irreducible.  
\item \textrm{(C)}$\not\Rightarrow$\textrm{(LJCP)} \label{itm:example local irreducibility:C does not imply LJCP} Consider the same one{\hyp}sorted local language $\lang$ and the $\lang${\hyp}structure $M$ with universe $2^{\omega}$ and interpretations $M\models_\lang \dd_n(\eta,\zeta)\Leftrightarrow |\{i\sth \eta(i)\neq\zeta(i)\}|\leq n$, $M\models_\lang P_n(\eta)\Leftrightarrow \eta(n)=1$ and $M\models_\lang Q_n(\eta)\Leftrightarrow \eta(n)=0$ for each $n\in\N$. Let $\theory=\Th_-(M)$ be the negative theory of $M$. Since $M\models_\lang\theory_\loc$, $\theory$ is locally satisfiable by \cref{t:local compactness}. For $\eta\in M$, write $M_\eta\coloneqq\Dtt(\eta)$.

Now, we look at $M$ as non{\hyp}local structure (i.e. as $\lang_\star${\hyp}structure). Note that by \cite[Lemma 2.24]{segel2022positive}, we get that $M$ is homomorphism universal for $\Mod_\star(\theory)$. In particular, there is a homomorphism $f\map N\to M$ for every locally positively closed model $N$ of $\theory$.

Let $N\models^\pc_\lang\theory$ be arbitrary and take $f\map N\to M$ homomorphism. Pick $a\in N$ arbitrary and set $\eta=f(a)$. Consider $M_\eta$. By \cref{l:locally satisfiable}, $M_\eta\models_\lang \theory$. Since $N$ is local, $f(N)\subseteq M_\eta$. Since $N$ is locally positively closed, we conclude that $f\map N\to M_\eta$ is a positive embedding. Take $\zeta\in M_\eta$ arbitrary and take $n=|\{m\sth \zeta(m)\neq \eta(m)\}|$, $I_0=\{m\sth \zeta(m)=0,\ \eta(m)=1\}$ and $I_1=\{m\sth \zeta(m)=1,\ \eta(m)=0\}$. Note that, then, $M_\eta\models_\lang\psi(\zeta,\eta)$ where $\psi(\zeta,\eta)=\dd_n(\zeta,\eta)\wedge\bigwedge_{m\in I_0} P_m(\zeta)\wedge\bigwedge_{m\in I_1} Q_m(\zeta)$. Hence, as $f$ is a positive embedding, there is $b\in N$ such that $N\models_\lang \psi(b,a)$. Now, by \cref{r:homomorphism and positive embeddings}, then $M_\eta\models_\lang \psi(f(b),\eta)$. However, by the pigeonhole principle, $\zeta$ is the unique element realising $\psi(x,\eta)$ in $M_\eta$, so we conclude that $f(b)=\zeta$. Since $\zeta$ is arbitrary, we conclude that $f$ is surjective, so it is an isomorphism. Since $N$ is arbitrary, we conclude that every locally positively closed model of $\theory$ is isomorphic to $M_\eta$ for some $\eta\in M$.

Let $\eta\in M$ and suppose $M\models_\lang \varphi$ with $\varphi$ a positive formula in $\lang$. For $n\in \N$, let $\lang_n$ be the reduct of $\lang$ consisting of $\{\dd_k,P_k,Q_k\}_{k\leq n}$ and take $n$ such that $\varphi$ is a formula in $\lang_n$. Consider the map $g\map M\to M_\eta$ given by $g(\xi)(k)=\xi(k)$ for $k\leq n$ and $g(\xi)(k)=\eta(k)$ for $k>n$. Obviously, $P_k(\xi)$ implies $P_k(g(\xi))$ and $Q_k(\xi)$ implies $Q_k(g(\xi))$ for any $k\leq n$. On the other hand, $|\{m\sth g(\xi)(m)\neq g(\zeta)(m)\}|\leq |\{m\sth \xi(m)\neq\zeta(m)\}|$, so $\dd_k(\xi,\zeta)$ implies $\dd_k(g(\xi),g(\zeta))$ for any $k$. Thus, $g$ is an $\lang_n${\hyp}homomorphism. By \cref{r:homomorphism and positive embeddings}, we conclude that $M_\eta\models_\lang\varphi$. As $\varphi$ is arbitrary, we conclude that $\Th_-(M_\eta)=\theory$ for any $\eta\in M$. Hence, $\theory$ is a complete local theory. 

Finally, note that $\theory$ does not have the local joint continuation property. Take $\eta,\zeta\in M$ with $\{m\sth \zeta(m)\neq \eta(m)\}$ infinite. In order to reach a contradiction, suppose that there are homomorphisms $h_1\map M_\eta\to N$ and $h_2\map  M_\zeta\to N$ with $N\models_\lang \theory$. Then, $N\models_\lang \bigwedge_{m\in I}R_m(h_1(\eta),h_2(\zeta))$ where $R_m(x,y)=(P_m(x)\wedge Q_m(y))\vee (Q_m(x)\wedge P_m(y))$ and $I=\{m\sth \zeta(m)\neq\eta(m)\}$. On the other hand, for any finite subset $J\subseteq \N$, $\theory\models_\lang \neg\exists x y\mathrel{} (\bigwedge_{m\in J}R_m(x,y)\wedge \dd_{|J|-1}(x,y))$. Thus, we get $N\not\models_\lang \dd_n(h_1(\eta),h_2(\zeta))$ for every $n\in\N$, contradicting that $N$ is a local structure.
\item \textrm{(I)}$\not\Rightarrow$\textrm{(wC)} \label{itm:example local irreducibility:I does not imply wC uncountable} Consider the one{\hyp}sorted local language $\lang$ with locality relations $\Dtt=\{\dd_n\}_{n\in\N}$ (with the usual ordered monoid structure induced by the natural numbers) and unary predicates $\{P_\alpha\}_{\alpha<\omega_1}$. Let $\mathcal{C}$ be the class of all $\lang${\hyp}structures $M$ with universe $\Z$ such that $M\models_\lang \dd_n(a,b)\Leftrightarrow |a-b|\leq n$ and such that $P_\alpha(M)\cap P_\beta(M)=\emptyset$ for all $\alpha<\beta<\omega_1$. Let $\theory \coloneqq\bigcap_{M\in\mathcal{C}}\Th_-(M)$ be the negative theory of $\mathcal{C}$.

Note that $\theory$ is a negative local theory; if $\theory\models_\lang\varphi$ for $\varphi$ a negative sentence, then, in particular, $M\models_\lang\varphi$ for any $M\in\mathcal{C}$ (since $M$ is local and $\theory\subseteq\Th_-(M)$), so $\varphi\in\theory$.

Pick $\exists x\mathrel{} \varphi(x)\in\theory_+$ and $\exists y\mathrel{} \psi(y)\in\theory_+$ arbitrary. There are $M_1,M_2\in\mathcal{C}$ such that $M_1\models_\lang\exists x\mathrel{} \varphi(x)$ and $M_2\models_\lang\exists y\mathrel{} \psi(y)$ by definition of $\theory$. Let $a\in M_1^x$ and $b\in M_2^y$ be tuples such that $M_1\models_\lang\varphi(a)$, $M_2\models_\lang\psi(b)$. Let $M_{12}$ be the $\lang${\hyp}structure with universe $\Z$ given by $M_{12}\models_\lang \dd_n(x,y)\Leftrightarrow |x-y|\leq n$ and 
\[M_{12}\models_\lang P_\alpha(c)\Leftrightarrow \begin{cases} 
M_1\models_\lang P_\alpha(c+\max(a))& \mathrm{if\ }c\leq 0,\\ 
M_2\models_\lang P_\alpha(c-\min(b)-1) & \mathrm{if\ } c>0.
\end{cases}\]
Then, the substructure $(-\infty,\max(a)]\subseteq M_1$ is isomorphic to the substructure $(-\infty,0]\subseteq M_{12}$. Similarly, the substructure $[\min(b),\infty)\subseteq M_2$ is isomorphic to the substructure $[1,\infty)\subseteq M_{12}$. Therefore, $M_{12}\models_\lang\exists x y\mathrel{} (\varphi(x)\wedge\psi(y))$. Also, the subsets $\{P_\alpha(M_{12})\}_{\alpha<\omega_1}$ are disjoint (since every negative element is contained in at most one as $M_1\in\mathcal{C}$, and likewise for the positive elements). Thus, $M_{12}\in\mathcal{C}$, concluding $\exists x y\mathrel{} (\varphi(x)\wedge\psi(y))\in\theory_+$ witnessed by $M_{12}$. As $\varphi$ and $\psi$ are arbitrary, we conclude that $\theory$ is irreducible.

On the other hand, take $N\models_\lang\theory$ arbitrary and choose a single element $a\in N$. Then, for any $n\in\N$, the set $C_n=\{\alpha<\omega_1\sth P_\alpha(N)\cap\dd_n(a)\neq\emptyset\}$ is of cardinality at most $2n+1$. Indeed, for any distinct $\alpha_0,\ldots,\alpha_{2n+1}$, we have that $\varphi=\neg\exists x y_0\ldots y_{2n+1}\mathrel{} (\bigwedge_{i\leq 2n+1} P_{\alpha_i}(y_i)\wedge\dd_n(x,y_i))$ holds in every structure in $\mathcal{C}$, so $\varphi\in\theory$. Therefore, $\bigcup_{n\in\N}C_n$ is at most countable, and so there is some $\alpha\in\omega_1\setminus\bigcup_{n\in\N}C_n$. For such $\alpha$, we get 
\[\emptyset=\bigcup_{n\in\N}(P_\alpha(N)\cap\dd_n(a))=P_\alpha(N)\cap\bigcup_{n\in\N}\dd_n(a)=P_\alpha(N).\]
In sum, $N\models_\lang \neg\exists x\mathrel{} P_\alpha(x)$, while clearly $\neg\exists x\mathrel{} P_\alpha(x)\notin\theory$. Thus, $\theory\neq\Th_-(N)$. As $N\models_\lang\theory$ is arbitrary, it follows that $\theory$ is not weakly complete.
\end{enumerate} 
\end{ex}

On the other hand, for countable languages the implication $\mathrm{(wC)}\Rightarrow\mathrm{(I)}$ of \cref{t:local irreducibility} is in fact an equivalence, as \cref{t:weakly complete iff irreducible} shows.

\begin{defi}\label{d:bound} Let $x=\{x_i\}_{i\in N}$ be a variable. Let $I$ be the subset of indexes $i\in N$ such that there is a constant symbol on the sort of $x_i$ and $J$ the set of pairs of indexes $(i,j)\in N\times N$ such that $x_i$ and $x_j$ are on the same sort. A \emph{bound} of $x$ in $\theory$ is a partial quantifier free type of the form \[\Bound(x)\coloneqq \bigwedge_{i\in I}\dd_i(x_i,c_i) \wedge\bigwedge_{(i,j)\in J} \dd_{i,j}(x_i,x_j)\] where $c_i$ is some constant on the sort of $x_i$ for $i\in I$, $\dd_i$ is a locality relation on the sort of $x_i$ for $i\in I$ and $\dd_{i,j}$ is a locality relation on the common sort of $x_i$ and $x_j$ for $(i,j)\in J$.
\end{defi}

\begin{rmk} When $x$ is a finite variable, the bounds of $x$ are quantifier free positive formulas on $x$ constructed using only locality predicates and constants.
\end{rmk}

\begin{lem}\label{l:irreducibility} Let $\varphi(x)$ be a positive formula such that $\exists x\mathrel{} \varphi(x)\in \theory_+$. Then, there is a bound $\Bound(x)$ of $x$ such that $\exists x\mathrel{} (\Bound(x)\wedge\varphi(x))\in \theory_+$.
\end{lem}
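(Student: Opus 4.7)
The plan is to unpack the meaning of $\exists x\,\varphi(x)\in\theory_+$, which by definition gives a local model $M\models\theory$ together with a tuple $a\in M^x$ such that $M\models\varphi(a)$. Since $\varphi$ is a formula, the variable $x=\{x_i\}_{i\in N}$ is finite, so the index sets $I$ and $J$ of \cref{d:bound} are both finite. The strategy is then to read off a bound for $x$ directly from $a$, using the locality axioms of $M$.

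More concretely, for each $i\in I$ fix a constant symbol $c_i$ on the sort of $x_i$. By \cref{itm:axiom 5} applied in $M$ to $a_i$ and $c_i^M$, there is some locality relation symbol $\dd_i$ on that sort with $M\models\dd_i(a_i,c_i^M)$. Similarly, for each $(i,j)\in J$ the elements $a_i,a_j$ lie in a common sort, so by \cref{itm:axiom 5} there is some locality relation symbol $\dd_{i,j}$ with $M\models\dd_{i,j}(a_i,a_j)$. Defining
\[
\Bound(x) \coloneqq \bigwedge_{i\in I}\dd_i(x_i,c_i) \wedge \bigwedge_{(i,j)\in J}\dd_{i,j}(x_i,x_j),
\]
we obtain a bound of $x$ in $\theory$ in the sense of \cref{d:bound}, and by construction $M\models \Bound(a)\wedge\varphi(a)$. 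Hence $M\models \exists x\, \Bound(x)\wedge\varphi(x)$, and since $M$ is a local model of $\theory$ this witnesses $\exists x\, \Bound(x)\wedge\varphi(x)\in\theory_+$.

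There is no real obstacle here: the content of the statement is essentially that the locality axiom \cref{itm:axiom 5} is a first-order consequence of being a local model, relative to any fixed finite tuple of witnesses, and the finiteness of $x$ ensures that only finitely many instances of locality need to be collected into a single positive formula. The same argument would go through for infinite $x$ if one interprets $\exists x\,\varphi(x)\in\theory_+$ as existence in a local model, the only cost being that $\Bound(x)$ is then a partial type rather than a formula, matching exactly the statement of \cref{d:bound}.
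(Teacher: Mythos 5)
Your proof is correct and follows exactly the paper's argument: take a local model $M\models\theory$ witnessing $\exists x\,\varphi(x)\in\theory_+$ with witness $a$, use locality of $M$ (axiom \cref{itm:axiom 5}) to extract a bound $\Bound(x)$ realised by $a$, and conclude that $M$ witnesses $\exists x\,\Bound(x)\wedge\varphi(x)\in\theory_+$. The only difference is that you spell out the appeal to \cref{itm:axiom 5} that the paper leaves implicit in the phrase ``as $M$ is local''.
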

\begin{proof} Suppose $\exists x\mathrel{} \varphi(x)\in\theory_+$. Take $M$ witnessing it and $a\in M^x$ such that $M\models_\lang\varphi(a)$. As $M$ is local, there is a bound $\Bound(x)$ such that $M\models_\lang \Bound(a)$. Therefore, $M\models_\lang\exists x\mathrel{} (\Bound(x)\wedge\varphi(x))$, concluding $\exists x\mathrel{} (\Bound(x)\wedge\varphi(x))\in \theory_+$.
\end{proof}

\begin{theo}\label{t:weakly complete iff irreducible} Assume $\lang$ is countable. Then, $\theory$ is weakly complete if and only if it is irreducible. 
\end{theo}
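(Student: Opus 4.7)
The implication $(\mathrm{wC})\Rightarrow(\mathrm{I})$ is \cref{t:local irreducibility}, so we assume $\theory$ is irreducible and $\lang$ is countable and aim to show $(\mathrm{wC})$. By \cref{r:locally weakly complete} it is enough to produce a single local $\lang${\hyp}model of $\theory_\pm$. The plan is a Henkin{\hyp}style construction exploiting countability of $\lang$: enumerate the positive sentences of $\theory_+$ as $\{\sigma_n\}_{n\in\N}$ with $\sigma_n\equiv \exists\bar x_n\,\phi_n(\bar x_n)$ in normal form (\cref{r:normal forms}), introduce fresh Henkin constants $\bar c_n$ for $\bar x_n$, and work in the expansion $\lang^+\coloneqq\lang(\bar c_n:n\in\N)$. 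The target is the $\Pi_1${\hyp}local theory $\Sigma\coloneqq\theory\cup\{\phi_n(\bar c_n)\}_n$ in $\lang^+$: the $\lang${\hyp}reduct of any local $\lang^+${\hyp}model of $\Sigma$ is a local $\lang${\hyp}model of $\theory_\pm$, since $\Dtt^s_{\lang^+}=\Dtt^s_\lang$ so \cref{itm:axiom 5} is inherited.

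The bound function on $\lang^+$ is extended recursively. At stage $n$, having fixed $\BoundConst$ on all pairs of constants involving only $\bar c_0,\ldots,\bar c_{n-1}$, iterated irreducibility (combining across different sorts by padding with dummy variables, using that $\exists z\,z=z\in\theory_+$ whenever the sort of $z$ occurs in some sentence of $\theory_+$) yields $\exists\bar x_0,\ldots,\bar x_n\,\bigwedge_{i\leq n}\phi_i(\bar x_i)\in\theory_+$, and \cref{l:irreducibility} supplies a bound $\Bound$ of the combined tuple that can be adjoined while remaining in $\theory_+$. Using the monoid operation $\ast$ and monotonicity \cref{itm:axiom 2} (so that any $\Bound'\succeq\Bound$ is also a valid bound), we enlarge $\Bound$ to be $\succeq$ every $\BoundConst$ entry fixed at earlier stages, and declare the remaining $\BoundConst$ entries involving $\bar c_n$ to be the corresponding components of the enlarged $\Bound$.

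Local satisfiability of $\Sigma$ then follows from \cref{t:local compactness}: a finite subset $\Sigma_0\subseteq\Sigma$ mentions only $\sigma_{n_1},\ldots,\sigma_{n_k}$, and the stage{\hyp}$\max_j n_j$ construction produces a local $\lang${\hyp}model $N\models\theory$ whose witnesses for $\bar x_{n_j}$ realise all the prescribed (enlarged) bounds; interpreting the $\bar c_{n_j}$ as these witnesses and the remaining Henkin constants as arbitrary elements of $N$ of the right sorts (the enlarged bounds are loose enough to accommodate any such choice) yields a local $\lang^+${\hyp}model of $\Sigma_0$. The main obstacle is the coherent recursive extension of $\BoundConst$ on $\lang^+$: \cref{l:irreducibility} provides a bound at each stage, but this bound depends on the finite combination of positive sentences under consideration, and one must use the $\ast${\hyp}monoid structure on $\Dtt$ together with monotonicity to patch these stage{\hyp}dependent bounds into a single global assignment. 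Countability of $\lang$ is essential here because it keeps the recursion finitary at each pair of constants, in sharp contrast with the uncountable counterexample \cref{e:local irreducibility}(\ref{itm:example local irreducibility:I does not imply wC uncountable}), where no such coherent extension can exist.
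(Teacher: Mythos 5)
Your overall strategy---enumerate the positive content of $\theory_+$ using countability of $\lang$, recursively attach compatible bounds via irreducibility and \cref{l:irreducibility}, then conclude by compactness---is the same as the paper's; the paper simply works with a partial type $\Sigma(x_i)_{i\in\N}$ in infinitely many free variables rather than with Henkin constants, and therefore never needs to extend the bound function at all. Your Henkin repackaging, however, introduces a genuine gap in the last step. To apply \cref{t:local compactness} to $\Sigma=\theory\cup\{\phi_n(\bar c_n)\}_n$ you must exhibit, for each finite $\Sigma_0\subseteq\Sigma$, a \emph{local} $\lang^+$-structure satisfying $\Sigma_0$, and such a structure must satisfy axiom \cref{itm:axiom 4} for \emph{every} pair of constants of $\lang^+$---including all the Henkin constants not mentioned in $\Sigma_0$, whose bounds $\BoundConst_{\bar c_m,\bar c_{m'}}$ your recursion has already fixed to be specific locality relation symbols. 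Interpreting those constants ``as arbitrary elements of $N$ of the right sorts'' will in general violate these prescribed bounds: a local structure typically contains pairs lying in no fixed ball of one another (in $\Z$ with $\dd_n(x,y)\Leftrightarrow|x-y|\leq n$, no single $\dd_k$ accommodates an arbitrary pair), so the claim that ``the enlarged bounds are loose enough to accommodate any such choice'' is false.

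The gap is repairable, and the repair essentially collapses your argument onto the paper's: instead of invoking \cref{t:local compactness} for $\Sigma$, apply \cref{l:locally satisfiable} directly. The set $\Sigma\wedge\theory_\loc(\lang^+)$ is finitely satisfiable in the ordinary non-local sense---there a finite fragment mentions only finitely many Henkin constants and the interpretation of the remaining ones is genuinely irrelevant---so ordinary compactness gives $M'\mmodels\Sigma\wedge\theory_\loc(\lang^+)$, and the local component of $M'$ at the constants is a local $\lang^+$-model of $\Sigma$ precisely because your bounds tie all the $\bar c_n$ into a single component; this is what the paper does with the realisation $a$ of its partial type. Two smaller points, which the paper also glosses over but which you should still flag: the definition of irreducibility only covers tuples \emph{of the same sort}, so your padding with dummy variables needs the convention that sorts are nonempty (otherwise $\exists x\,\varphi(x)\wedge z=z\in\theory_+$ requires a single witness realising $\varphi$ with nonempty $z$-sort, which is circular); and the later-stage bounds produced by \cref{l:irreducibility} need not refine the $\BoundConst$ entries fixed earlier---the paper sidesteps this by accumulating all the bounds $\Bound_0,\dots,\Bound_i$ conjunctively in $\psi_i$ rather than committing to a single global assignment, whereas your patching via $\ast$ and monotonicity has to be carried out explicitly.
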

\begin{proof} Let $I$ be the family of quantifier free primitive positive formulas $\varphi$ (up to renaming variables) such that $\exists x\mathrel{}\varphi(x)\in\theory_+$. Without loss of generality, we pick disjoint variables $x_\varphi$ for each $\varphi\in I$. Since $\lang$ is countable, fix an enumeration $\{\varphi_i(x_i)\}_{i\in\N}$ of $I$. 

By irreducibility, recursively define bounds $\Bound_i$ of $x_0\ldots x_i$ for each $i\in \N$ such that $\exists x_0\ldots x_i\mathrel{} \psi_i\in \theory_+$ where $\psi_i\coloneqq \bigwedge_{j\leq i}\varphi_j(x_j)\wedge\Bound_j(x_0\ldots x_j)$. Indeed, by \cref{l:irreducibility}, we can find $\Bound_0$. Now, given $\Bound_0,\ldots, \Bound_i$, as $\exists x_0\ldots x_i\mathrel{} \psi_i\in \theory_+$ and $\exists x_{i+1}\mathrel{}\varphi_{i+1}(x_{i+1})\in \theory_+$, by irreducibility and \cref{l:irreducibility}, there is a bound $\Bound_{i+1}(x_0\ldots x_{i+1})$ such that $\exists x_0\ldots x_{i+1}\mathrel{} \psi_{i+1}=\exists x_0\ldots x_{i+1}\mathrel{} (\psi_i\wedge \varphi_{i+1}\wedge \Bound_{i+1})\in \theory_+$.

Consider the partial type $\Sigma(x_i)_{i\in\N}=\{\psi_i(x_0,\ldots,x_i)\}_{i\in\N}$. Then, it is finitely satisfiable with $\theory\wedge\theory_\loc$ by construction, so there is some structure $M'\premodels\theory\wedge\theory_\loc$ and $a\coloneqq (a_i)_{i\in\N}$ in $M'$ realising $\Sigma$. Pick a subtuple $b$ of $a$ containing one element for each sort and consider $M=\Dtt(b)$, which is certainly a local $\lang${\hyp}structure and contains $a$. By \cref{l:locally satisfiable}, $M\models_\lang \theory$ and $M\models_\lang \Sigma(a)$. Thus, $\Th_-(M')=\theory$ as required.
\end{proof}

A \emph{(local) weak completion} of $\theory$ is a negative local theory that is minimal among the weakly complete negative local theories containing $\theory$. A \emph{(local) completion} of $\theory$ is a negative local theory that is minimal among the complete negative local theories containing $\theory$.

\begin{lem} \label{l:weak completions} Suppose $\widetilde{\theory}$ is a weak completion of $\theory$. Then, $\widetilde{\theory}=\Th_-(M)$ for some $M\models^\pc_\lang\theory$. 
\end{lem}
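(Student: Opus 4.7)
The plan is to exploit the minimality built into the definition of a weak completion. Starting from the hypothesis, $\widetilde{\theory}$ is weakly complete, so by definition there is a local structure $M_0$ with $\widetilde{\theory}=\Th_-(M_0)$ and $M_0\models^\pc\widetilde{\theory}$. The goal is to replace $M_0$ with some $M\models^\pc\theory$ whose negative theory is still $\widetilde{\theory}$.

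Since $\widetilde{\theory}\supseteq\theory$, the structure $M_0$ is in particular a local model of $\theory$. Applying \cref{t:positively closed} to $M_0$, I would obtain a homomorphism $f\map M_0\to M$ with $M\models^\pc\theory$. By the standard fact recorded in \cref{r:homomorphism and positive embeddings}, homomorphisms preserve positive formulas, so every positive sentence satisfied by $M_0$ is also satisfied by $M$, and hence $\Th_-(M)\subseteq\Th_-(M_0)=\widetilde{\theory}$. On the other hand, $M\models\theory$ yields $\theory\subseteq\Th_-(M)$.

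It remains to observe that $\Th_-(M)$ is itself a weakly complete negative local theory: by \cref{r:pc is preserved by increasing the theory} we have $M\models^\pc\Th_-(M)$, so $\Th_-(M)$ is weakly complete with witness $M$. Thus $\Th_-(M)$ is a weakly complete negative local theory sandwiched between $\theory$ and $\widetilde{\theory}$, and minimality of $\widetilde{\theory}$ among such theories forces $\Th_-(M)=\widetilde{\theory}$, as required.

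No serious obstacle is expected here; the argument is an easy combination of the existence of positively closed homomorphic images (\cref{t:positively closed}) with the minimality clause in the definition of a weak completion. The only step that requires a moment of care is correctly tracking the inclusion $\Th_-(M)\subseteq\Th_-(M_0)$ in the direction dictated by the homomorphism $f$.
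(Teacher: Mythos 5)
Your proposal is correct and follows essentially the same route as the paper: pass from a witness $M_0$ of $\widetilde{\theory}=\Th_-(M_0)$ to some $M\models^\pc\theory$ via \cref{t:positively closed}, use preservation of positive sentences to get $\theory\subseteq\Th_-(M)\subseteq\widetilde{\theory}$, and conclude by minimality. Your explicit remark that $M\models^\pc\Th_-(M)$ (so that $\Th_-(M)$ is indeed weakly complete and minimality applies) is a detail the paper leaves implicit, and is handled correctly.
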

\begin{proof} Take a local model $N\models_\lang \widetilde{\theory}$ with $\Th_-(N)=\widetilde{\theory}$. Obviously, $N\models_\lang\theory$. By \cref{t:positively closed}, there is a homomorphism $f\map N\to M$ to $M\models^\pc_\lang\theory$. By \cref{r:locally weakly complete}, $N$ is a local model of $\widetilde{\theory}_{\pm}$. As $f$ is a homomorphism, by \cref{r:homomorphism and positive embeddings}, we have that $M\models_\lang\varphi$ for any $\varphi\in \widetilde{\theory}_+$. Hence, $\widetilde{\theory}_+\subseteq \Th_+(M)$, concluding that $\theory\subseteq \Th_-(M)\subseteq\widetilde{\theory}$. By minimality of $\widetilde{\theory}$, it follows that $\widetilde{\theory}=\Th_-(M)$.
\end{proof}

\begin{rmk} By \cref{l:weak completions}, in particular, the weak completions of $\theory$ are exactly the minimal elements of $\{\Th_-(M)\sth M\models^\pc_\lang\theory\}$. \end{rmk}

\begin{lem}\label{l:existence of weak completions} Assume $\lang$ is countable. Every locally satisfiable negative local theory has a weak completion.
\end{lem}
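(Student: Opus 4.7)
The plan is to apply Zorn's lemma to the family $\mathcal{W}$ of weakly complete negative local theories containing $\theory$, ordered by inclusion, and extract a $\subseteq$-minimal element. I need only verify that $\mathcal{W}$ is nonempty and that every $\subseteq$-chain in $\mathcal{W}$ admits a lower bound inside $\mathcal{W}$.

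Nonemptiness is immediate from earlier results. By the standing assumption that $\theory$ is locally satisfiable, pick $N\models\theory$, use \cref{t:positively closed} to obtain a homomorphism $N\to M$ with $M\models^\pc\theory$, and observe that $\Th_-(M)$ is weakly complete and contains $\theory$, so it lies in $\mathcal{W}$.

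For the chain step, given a $\subseteq$-chain $(\theory_i)_{i\in I}$ in $\mathcal{W}$ I will take $\widetilde{\theory}\coloneqq\bigcap_{i\in I}\theory_i$. It clearly contains $\theory$ and consists of negative sentences; closure under local implication and local satisfiability both follow from the elementary observation that every local model of any $\theory_i$ is a local model of $\widetilde{\theory}$. The central step, and the one I expect to carry the actual content of the argument, is to verify that $\widetilde{\theory}$ is irreducible: once this is established, \cref{t:weakly complete iff irreducible} (using the countability of $\lang$) upgrades irreducibility to weak completeness and places $\widetilde{\theory}$ inside $\mathcal{W}$.

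To prove irreducibility, suppose $\exists x\,\varphi,\exists y\,\psi\in\widetilde{\theory}_+$. Since $\neg\exists x\,\varphi\notin\widetilde{\theory}=\bigcap\theory_i$, there is some $i_0\in I$ with $\exists x\,\varphi\in(\theory_{i_0})_+$, and similarly some $j_0\in I$ with $\exists y\,\psi\in(\theory_{j_0})_+$. The decisive observation is that, since the chain is totally ordered, the $\subseteq$-smaller of $\theory_{i_0}$ and $\theory_{j_0}$ (call it $\theory_k$) is contained in both, hence both existential sentences remain in $(\theory_k)_+$. Since $\theory_k\in\mathcal{W}$ is weakly complete and $\lang$ is countable, $\theory_k$ is itself irreducible by \cref{t:weakly complete iff irreducible}, and therefore $\exists x,y\,\varphi\wedge\psi\in(\theory_k)_+\subseteq\widetilde{\theory}_+$. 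This completes the chain step, and Zorn's lemma provides the desired $\subseteq$-minimal element of $\mathcal{W}$, i.e.\ a weak completion of $\theory$. The only delicate point is the step of reducing irreducibility at the limit $\widetilde{\theory}$ to irreducibility of a single member of the chain; this reduction is essentially forced by the total order on the chain and depends on countability only through the appeal to \cref{t:weakly complete iff irreducible}.
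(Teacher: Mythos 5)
Your proof is correct and follows essentially the same route as the paper's: Zorn's lemma applied to the weakly complete negative local theories containing $\theory$, with the chain step handled by intersecting, pushing both witnesses into the smaller member of the chain (whose positive part is larger), and invoking \cref{t:weakly complete iff irreducible} twice to pass between irreducibility and weak completeness. The only (harmless) difference is that you spell out nonemptiness via \cref{t:positively closed}, which the paper leaves implicit.
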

\begin{proof} Let $\theory_0$ be a locally satisfiable negative local theory. Consider the family of weakly complete negative local theories containing $\theory_0$. Note that it is not empty as $\theory_0$ is locally satisfiable and $\Th_-(M)$ for any $M\models_\lang\theory$ is a weakly complete extension of it. By Zorn's Lemma, it is sufficient to show that if $\mathcal{T}$ is a (non{\hyp}empty) family of weakly complete negative local theories containing $\theory_0$ linearly ordered under inclusion, then $\theory=\bigcap\mathcal{T}$ is also a weakly complete negative local theory containing $\theory_0$. Trivially, $\theory_0\subseteq \theory$. Certainly, $\theory$ is locally satisfiable, since any local model of any $\theory'\in\mathcal{T}$ is also a local model of $\theory$. It is also obviously closed under local implication. Indeed, if $\theory\models_\lang\varphi$ for some negative sentence $\varphi$, then $\theory'\models_\lang\varphi$ for any $\theory'\in\mathcal{T}$, so $\varphi\in\theory'$ for all $\theory'\in\mathcal{T}$, concluding $\varphi\in\theory$.

Finally, by \cref{t:weakly complete iff irreducible}, it is enough to show that $\theory$ is irreducible. Suppose $\exists x\mathrel{} \varphi(x)$ and $\exists y\mathrel{} \psi(y)$ are in $\theory_+$, with $\varphi(x)$ and $\psi(y)$ positive formulas and $x$ and $y$ disjoint variables on the same sort. Then, there are $\theory',\theory''\in\mathcal{T}$ such that $\exists x\mathrel{}  \varphi(x)\in \theory'_+$ and $\exists y\mathrel{}\psi(y)\in\theory''_+$. Since $\mathcal{T}$ is linearly ordered, we may assume $\theory'=\theory''$. Since $\theory'$ is irreducible by \cref{t:weakly complete iff irreducible}, we conclude $\exists x y\mathrel{}(\varphi(x)\wedge \psi(y))\in \theory'_+$. Therefore, since $\theory\subseteq\theory'$, we get $\theory'_+\subseteq\theory_+$, so $\exists x y\mathrel{}(\varphi(x)\wedge\psi(y))\in \theory_+$ as required. 
\end{proof}
\begin{rmk} Note that in \cref{l:existence of weak completions} the countability assumption on $\lang$ is only used to apply \cref{t:weakly complete iff irreducible}. Without this assumption, the previous lemma shows that every negative local theory has an \emph{irreducible completion}, i.e. a minimal irreducible negative local theory extending it. \end{rmk}

\begin{ex}\label{e:weak completions} In uncountable languages, weak completions may not exist: 

Consider the theory $\theory$ from \cref{e:local irreducibility}(\ref{itm:example local irreducibility:I does not imply wC uncountable}) and, for each $\alpha<\omega_1$, the theory $\theory_\alpha$ extending $\theory$ but only allowing $P_\beta$ for $\beta<\alpha$. Then each $\theory_\alpha$ is irreducible and, since it is effectively a theory of a countable language, it is weakly complete by \cref{t:weakly complete iff irreducible}. Every weakly complete theory $\theory'$ containing $\theory$ is by definition the theory of a model $M$ of $\theory$, thus $\{\beta<\omega_1\sth \neg\exists x\mathrel{} P_\beta(x)\notin\theory'\}$ is countable, so it has some bound $\alpha<\omega_1$. We conclude that $\theory_{\alpha+2}\subsetneq\theory'$ (since $M\models_\lang\theory_{\alpha+2}$) and, consequently, a weak completion does not exist.
\end{ex}

\begin{lem} \label{l:maximal and completions} Suppose $N\models^\pc_\lang\theory$ satisfies that $\Th_-(N)$ is complete. Then, $\Th_-(N)$ is a completion of $\theory$ and maximal in $\{\Th_-(M)\sth M\models^\pc_\lang \theory\}$. 
\end{lem}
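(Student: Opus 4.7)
The plan is to derive both conclusions directly from the definitions together with \cref{r:pc is preserved by increasing the theory}, which tells us that local positive closedness is preserved when we pass to a larger theory (provided the model still satisfies it). Note that $\Th_-(N)\supseteq\theory$ since $N\models\theory$, and $\Th_-(N)$ is complete by hypothesis, so the only content is minimality and maximality.

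For maximality in $\{\Th_-(M)\sth M\models^\pc\theory\}$, I would take $M\models^\pc\theory$ with $\Th_-(N)\subseteq\Th_-(M)$ and aim to show equality. Since $\Th_-(N)\subseteq\Th_-(M)$, in particular $M\models\Th_-(N)$, and combined with $M\models^\pc\theory$ and $\theory\subseteq\Th_-(N)$, \cref{r:pc is preserved by increasing the theory} gives $M\models^\pc\Th_-(N)$. By completeness of $\Th_-(N)$, this forces $\Th_-(M)=\Th_-(N)$.

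For the completion statement, I would take an arbitrary complete negative local theory $\theory'$ with $\theory\subseteq\theory'\subseteq\Th_-(N)$ and show $\theory'=\Th_-(N)$. The inclusion $\theory'\subseteq\Th_-(N)$ means $N\models\theory'$, and since $N\models^\pc\theory$ with $\theory\subseteq\theory'$, \cref{r:pc is preserved by increasing the theory} yields $N\models^\pc\theory'$. Applying completeness of $\theory'$ to the local positively closed model $N$ itself, we obtain $\theory'=\Th_-(N)$, as required.

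No step here looks hard: the entire argument is an exercise in applying the definition of completeness (namely $\theory=\Th_-(M')$ for \emph{every} $M'\models^\pc\theory$, instantiated at the model at hand) together with the fact that $\models^\pc$ is preserved under enlarging the theory. The only thing to be careful about is the logical direction of the two inclusions in each part, which is exactly what makes maximality and minimality fall out of the same symmetric argument.
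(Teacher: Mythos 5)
Your proposal is correct and follows essentially the same route as the paper: both parts reduce to the observation that $\models^\pc$ is preserved under enlarging the theory (\cref{r:pc is preserved by increasing the theory}) and then invoke completeness of the relevant theory at the model at hand. Nothing is missing.
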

\begin{proof} Let $\theory\subseteq \widetilde{\theory}\subseteq \Th_-(N)$ with $\widetilde{\theory}$ a complete negative local theory. Then, as $N\models_\lang \widetilde{\theory}$ and $N\models^\pc_\lang \theory$ with $\theory\subseteq \widetilde{\theory}$, it follows that $N\models^\pc_\lang\widetilde{\theory}$. Thus, $\Th_-(N)=\widetilde{\theory}$ by completeness of $\widetilde{\theory}$. 

Let $M\models^\pc_\lang\theory$ with $\Th_-(N)\subseteq \Th_-(M)$. Then, as $M\models_\lang \Th_-(N)$ and $M\models^\pc_\lang \theory$ with $\theory\subseteq \Th_-(N)$, it follows that $M\models^\pc_\lang\Th_-(N)$, so $\Th_-(N)=\Th_-(M)$ by completeness of $\Th_-(N)$.
\end{proof}

On the other hand, note that not every maximal element of $\{\Th_-(M)\sth M\models^\pc_\lang\theory\}$ is necessarily complete, as the following example shows.

\begin{ex}\label{e:maximal not complete} Let $\lang$ be the one{\hyp}sorted language with unary relations $\{S\}\cup\{P_n,Q_n\}_{n\in\N}$ and locality relations $\Dtt=\{d_n\}_{n\in\N}$ (with the usual ordered monoid structure induced by the natural numbers). Let $G=(V,E)$ be the graph with vertices $V=\{(v_1,v_2)\in\Z\times\N\sth |v_1|\leq v_2\}$ and edges $v\mathrel{E}w$ if and only if $v_2=w_2\wedge |v_1-w_1|=1$ or $|v_2-w_2|=1\wedge v_1=w_1=0$. In other words, $G$ is the tree made of paths $C_m=\{v\sth v_2=m\}$ of length $2m+1$ and one path $\Gamma=\{v\sth v_1=0\}$ connecting all the paths $C_m$ at the centre.

Construct from $G$ a structure $M$ with universe $V$ and interpretations
\[P_n(v)\Leftrightarrow v_1\geq n,\ Q_n(v)\Leftrightarrow v_2\leq -n,\  S(v)\Leftrightarrow |v_1|=v_2\]
and $M\models_\lang \dd_n(v,w)\Leftrightarrow \dd_G(v,w)\leq n$, where $\dd_G$ is the graph distance of $G$ (see \cref{image:maximal not complete}). Since $G$ is connected, $M$ is a local structure, thus $\theory=\Th_-(M)$ is a weakly complete local theory.

\begin{figure}[ht]
\begin{center}
%
%
\begin{adjustbox}{width=0.75\textwidth}
\includegraphics[scale=1]{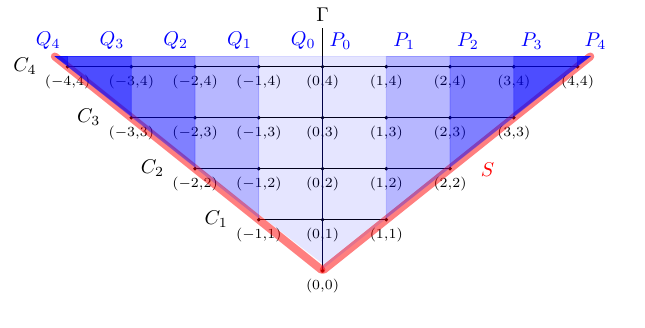}
\end{adjustbox}
\caption{} \label{image:maximal not complete}
\end{center}
\end{figure}

Let $Z$ be the structure with universe $\Z$ described in \cref{e:local irreducibility}(\ref{itm:example local irreducibility:wC does not imply C}) considered as an $\lang${\hyp}structure, i.e. $Z\models_\lang P_k(n)\Leftrightarrow n\geq k$, $Z\models_\lang Q_k(n)\Leftrightarrow n\leq -k$, $Z\models_\lang \dd_k(n,m)\Leftrightarrow |n-m|\leq k$ and $S(Z)=\emptyset$. Then, $Z\models_\lang\theory$, since every finite substructure of $Z$ is also a substructure of $M$ (contained in any long enough $C_n$). 

Let $I_S$ be the structure with a single element $\infty$ satisfying all $P_n$ as well as $S$ and no $Q_n$. Let $J_S$ be the structure with a single element $-\infty$ satisfying all $Q_n$ as well as $S$ and no $P_n$. Both are local models of $\theory$ since the types $\{S(x)\}\cup\{P_n(x)\}_{n\in\N}$ and $\{S(x)\}\cup\{Q_n(x)\}_{n\in\N}$ are finitely satisfiable in $M$.

We claim that $M$, $Z$, $I_S$ and $J_S$ are all the locally positively closed models of $\theory$. Indeed, say $N\models^\pc_\lang\theory$. Then, there are four cases: either $N\models_\lang \neg\exists x\mathrel{} Q_0(x)$, $N\models_\lang \neg \exists x\mathrel{} P_0(x)$, $N\models_\lang (\exists x\mathrel{} P_0(x))\wedge (\exists x\mathrel{} Q_0(x))\wedge(\neg\exists x\mathrel{} S(x))$ or $N\models_\lang (\exists x\mathrel{} P_0(x))\wedge (\exists x\mathrel{} Q_0(x))\wedge (\exists x\mathrel{} S(x))$. Since $M\models_\lang \forall x\mathrel{} (P_0(x)\vee Q_0(x))$, we have $\theory\models_\lang P_0\toprel Q_0$ by \cref{r:approximation and complementary}, so the four cases are exclusive.

Consider the case $N\models_\lang \neg\exists x\mathrel{} Q_0(x)$. Since $M\models_\lang \forall x\mathrel{} (Q_n(x)\rightarrow Q_0(x))$, we get $\theory\models_\lang Q_n\leq Q_0$ by \cref{r:approximation and complementary}, so $N\models_\lang\neg\exists x\mathrel{} Q_n(x)$ for all $n\in\N$ by \cref{l:approximation and complementary}(\ref{itm:approximation and complementary:approximation}). Hence, the map $h\map x\mapsto \infty$ from $N$ to $I_S$ is a homomorphism. As $N\models^\pc_\lang\theory$, we get that it is a positive embedding, so an isomorphism as it is surjective. 

Note that this implies that $I_S\models^\pc_\lang\theory$. Indeed, let $f\map I_S\to N$ be a homomorphism to $N\models^\pc_\lang\theory$, which exists by \cref{t:positively closed}. We obviously have that $\theory\models_\lang \neg\exists x y\mathrel{} (\dd_n(x,y)\wedge P_{n+1}(x)\wedge Q_0(y))$ for every $n\in\N$. Therefore, as $N\models_\lang P_n(f(\infty))$ for every $n\in\N$, we have that $N\models_\lang \neg\exists y\mathrel{} Q_0(y)$. Thus, we have the isomorphism $h\map N\to I_S$ defined above. In fact, as $|I_S|=1$, $h\circ f=\id$.

Similarly, we conclude that $N\cong J_S$ for $N\models^\pc_\lang\theory$ with $N\models_\lang \neg\exists x\mathrel{} P_0(x)$. In particular, $J_S\models^\pc_\lang\theory$. 

Consider now the case $N\models_\lang \left(\exists x\mathrel{} P_0(x)\right)\wedge \left(\exists x\mathrel{} Q_0(x)\right)\wedge \left(\neg\exists x\mathrel{} S(x)\right)$. By \cref{r:approximation and complementary}, $\theory\models_\lang P_0\toprel Q_1$ and $\theory\models_\lang P_0\perp Q_1$, so we get $N\models_\lang \forall x\mathrel{} (P_0(x)\leftrightarrow \neg Q_1(x))$ by \cref{l:approximation and complementary}(\ref{itm:approximation and complementary:complementary}). Consequently, we have the well{\hyp}defined map $h\map N\to Z$ given by
\[h(a)=\begin{cases} 
\phantom{-}\min\{k\sth N\models_\lang \exists y\mathrel{} (\dd_k(a,y)\wedge Q_0(y))\}&\mathrm{if\ }N\models_\lang P_0(a),\\ 
-\min\{k\sth N\models_\lang \exists y\mathrel{} (\dd_k(a,y)\wedge P_0(y))\} & \mathrm{if\ }N\models_\lang Q_1(a).
\end{cases}\]
We can easily check that $h$ is an isomorphism. 

Note that this implies that $Z\models^\pc_\lang\theory$. Indeed, let $f\map Z\to N$ be a homomorphism to $N\models^\pc_\lang$, which exists by \cref{t:positively closed}. Since $Z\models_\lang\left(\exists x\mathrel{} P_0(x)\right)\wedge\left(\exists x\mathrel{} Q_0(x)\right)$, we have that $N\models_\lang \left(\exists x\mathrel{} P_0(x)\right)\wedge\left(\exists x\mathrel{} Q_0(x)\right)$. It remains to show that $N\models_\lang\neg\exists x\mathrel{} S(x)$. Assume towards a contradiction that $N\models_\lang S(a)$ for some $a$ and pick $k$ such that $N\models_\lang \dd_k(a,f(0))$. 

First of all, note that $f(0)$ does not satisfy $S$, so $k\neq 0$. Indeed, $\theory\models_\lang S(x)\perp P_0(x)\wedge Q_0(x)\wedge \exists y\mathrel{} (P_1(y)\wedge\dd_1(x,y))$ as the only element in $M$ satisfying $S$, $P_0$ and $Q_0$ is $(0,0)$, whose $\dd_1${\hyp}ball contains no realisation of $P_1$ --- it only contains $(0,0)$ and $(1,0)$. Now, $Z\models_\lang P_0(0)\wedge Q_0(0)\wedge P_1(1)\wedge \dd_1(0,1)$, so $f(0)$ must not satisfy $S$. 
    
Now, for $k>0$,
\[\theory\models_\lang S(x)\wedge \exists y\mathrel{} (\dd_k(x,y)\wedge P_0(y)\wedge Q_0(y))\perp \exists y\mathrel{} (P_{2k}(y)\wedge \dd_{3k}(x,y)).\] 
Indeed, in $M$, $M\models_\lang \exists y\mathrel{} (\dd_k(v,y)\wedge P_0(y)\wedge Q_0(y))\Leftrightarrow\dd_G(v,\Gamma)=|v_1|\leq k$. Hence, we have that $M\models_\lang S(v)\wedge \exists y\mathrel{} (\dd_k(v,y)\wedge P_0(y)\wedge Q_0(y))\Leftrightarrow v_2=|v_1|\leq k$. On the other hand, if $w\in M$ satisfies $P_{2k}$, then $w_2\geq w_1\geq 2k$. Since $v_2\leq k<2k\leq w_2$, we have that $\dd_G(v,u)=|v_1|+|w_2-v_2|+|w_1|=v_2+w_2-v_2+w_1=w_1+w_2\geq 4k$, so $M\models_\lang \neg\exists x y\mathrel{} (\theta_k(x)\wedge P_{2k}(y)\wedge \dd_{3k}(x,y))$. Now, $N\models_\lang S(a)\wedge \dd_k(a,f(0))\wedge \dd_{3k}(a,f(2k))\wedge P_{2k}(f(2k))$, contradicting $N\models_\lang \theory$.

Hence, as $N\models_\lang \neg\exists x\mathrel{} S(x)$, we have the isomorphism $h\map N\to Z$ described above. In fact, as $N\models_\lang P_k(f(k))\wedge \exists y\mathrel{} (\dd_k(f(k),y)\wedge Q_0(y))$, we get that $h\circ f=\id$. 

Finally, consider the case $N\models_\lang\left(\exists x\mathrel{} P_0(x)\right)\wedge\left(\exists x\mathrel{} Q_0(x)\right)\wedge \left(\exists x\mathrel{} S(x)\right)$. Since $\theory\models_\lang P_0\toprel Q_1$ and $\theory\models_\lang P_0\perp Q_1$, we have that $N\models_\lang \forall x\mathrel{} (P_0(x)\leftrightarrow \neg Q_1(x))$ by \cref{l:approximation and complementary}(\ref{itm:approximation and complementary:approximation}). Consequently, we have the well{\hyp}defined map $h=(h_1,h_2)\map N\to M$ given by
\[h_1(a)=\begin{cases} \phantom{-}\min\{k\sth N\models_\lang \exists y\mathrel{} (\dd_k(a,y)\wedge Q_0(y))\}&\mathrm{if\ }N\models_\lang P_0(a),\\
-\min\{k\sth N\models_\lang \exists y\mathrel{} (\dd_k(a,y)\wedge P_0(y))\}&\mathrm{if\ }N\models_\lang Q_1(a);\end{cases}\]
and
\[h_2(a)= |h_1(a)|+\min\{k\sth N\models_\lang\exists y\mathrel{} (\dd_k(a,y)\wedge S(y))\}.\]
We claim that $h$ is an isomorphism.

Indeed, recall $\theory\models_\lang P_n\perp\exists y\mathrel{} (\dd_k(x,y)\wedge Q_0(y))$ for $k<n$. Thus, if $N\models_\lang P_n(a)$, then $h_1(a)\geq n$, so $M\models_\lang P_n(h(a))$. Similarly, if $N\models_\lang Q_n(a)$, then $h(a)\leq -n$, so $M\models_\lang Q_n(h(a))$.  If $N\models_\lang S(a)$, then $h_2(a)=|h_1(a)|$, so $M\models_\lang S(h(a))$.

Suppose $N\models_\lang \dd_n(a,b)$ with $h(a)=v=(v_1,v_2)$ and $h(b)=w=(w_1,w_2)$. Now, for $0\leq n\leq m$, consider the positive formulas 
\[\theta_{n,m}(x)\coloneqq P_n(x)\wedge \left(\exists y\mathrel{} (\dd_n(x,y)\wedge Q_0(y))\right)\wedge \left(\exists y\mathrel{} (\dd_{m-n}(x,y)\wedge S(y)\wedge P_m(y))\right),\] 
and
\[\theta_{-n,m}(x)\coloneqq Q_n(x)\wedge\left(\exists y\,(\dd_n(x,y)\wedge P_0(y))\right)\wedge \left(\exists y\,(\dd_{m-n}(x,y)\wedge S(y)\wedge Q_m(y))\right).\] 
Note that, in $M$, for $v\in M$, the unique element satisfying $\theta_v$ is precisely $v$. Also, for $0\leq n\leq m$, consider the formulas 
\[\psi_{n,m}(v)\coloneqq P_0(v)\wedge \left(\exists y\mathrel{} (\dd_n(v,y)\wedge Q_0(y))\right)\wedge \left(\exists y\mathrel{} (\dd_{m-n}(v,y)\wedge S(y))\right)\]
and
\[\psi_{-n,m}(v)\coloneqq Q_1(v)\wedge \left(\exists y\mathrel{} (\dd_n(v,y)\wedge P_0(y))\right)\wedge \left(\exists y\mathrel{} (\dd_{m-n}(v,y)\wedge S(y))\right)\]
Note that 
\[M\models_\lang \psi_{n,m}(v)\vee\psi_{-n,m}(v)\Leftrightarrow -n\leq v_1\leq n,\ v_2\leq m.\]
Therefore, 
\[M\models_\lang \forall x\mathrel{} ((\psi_{n,m}(x)\vee\psi_{-n,m}(x))\leftrightarrow \bigvee_{\substack{-n\leq i\leq n\\ \phantom{-}0\leq j\leq m}} \theta_{i,j}(x).\]
Thus, by \cref{r:approximation and complementary}, $\theory\models_\lang \psi_{n,m}\vee\psi_{-n,m}\leq \bigvee \theta_{i,j}$ and $\theory\models_\lang\bigvee\theta_{i,j}\leq \psi_{n,m}\vee\psi_{-n,m}$. Since $h(a)=v=(v_1,v_2)$, we get that $N\models_\lang \psi_v(a)$ and $N\not\models_\lang \psi_{n,m}(a)$ for $|n|\leq |v_1|$ or $m\leq v_2$. Hence, by \cref{l:approximation and complementary}(\ref{itm:approximation and complementary:approximation}), $N\models_\lang \theta_v(a)$. Similarly, $N\models_\lang\theta_w(b)$. Now, $M\models_\lang \neg\exists x y\mathrel{} (\theta_v(x)\wedge \theta_w(y)\wedge \dd_n(x,y))$ for $n< \dd_G(v,w)$, so $\theory\models_\lang \theta_v(x)\wedge \theta_w(y)\perp \dd_n(x,y)$ for $n< \dd_G(v,w)$, concluding that $n\geq \dd_G(h(a),h(b))$, so $M\models_\lang\dd_n(h(a),h(b))$, concluding that $h$ is a homomorphism. 

As $N$ is locally positively closed, we conclude that $h$ is a positive embedding. Now, $M\models_\lang \exists x\mathrel{} \theta_v(x)$ for all $v\in M$, so there is $a\in N$ such that $N\models_\lang \theta_v(a)$. Thus, $M\models_\lang \theta_v(h(a))$ by \cref{r:homomorphism and positive embeddings}. However, $v$ is the unique element in $M$ realising that formula, so $h(a)=v$. In conclusion, $h$ is surjective, so an isomorphism. 

Note that this implies $M\models^\pc_\lang\theory$. Indeed, let $f\map M\to N$ be a homomorphism to $N\models^\pc_\lang\theory$, which exists by \cref{t:positively closed}. Since $M\models_\lang\left(\exists x\mathrel{} P_0(x)\right)\wedge\left(\exists x\mathrel{} Q_0(x)\right)\wedge \left(\exists x\mathrel{} S(x)\right)$, we have $N\models_\lang \left(\exists x\mathrel{} P_0(x)\right)\wedge\left(\exists x\mathrel{} Q_0(x)\right)\wedge \left(\exists x\mathrel{} S(x)\right)$ by \cref{r:homomorphism and positive embeddings}. Thus, we have the isomorphism $h\map N\to M$ defined above. In fact, as $N\models_\lang \theta_v(f(v))$, we get that $h\circ f=\id$.

Hence, we have that $\Th_-(Z)$ is maximal among the negative theories of $\mathcal{M}^\pc(\theory)$. However, as noted in \cref{e:local irreducibility}(\ref{itm:example local irreducibility:wC does not imply C}), $\Th_-(Z)$ is not complete. For instance, $\Th_-(Z)$ has three different locally positively closed models, which are $Z$, $I=\{\infty\}$ and $J=\{-\infty\}$, with $I\models_\lang P_k(\infty)\wedge\neg Q_k(\infty)$ and $J\models_\lang Q_k(-\infty)\wedge\neg P_k(-\infty)$ for all $k\in\N$, and $I\models_\lang \neg S(\infty)$ and $J\models_\lang \neg S(-\infty)$. 
\end{ex}

\begin{que}\label{q:completions} Are there always completions? Is every completion of the form $\Th_-(M)$ for some $M\models^\pc_\lang\theory$? Are there always maximal elements in $\{\Th_-(M)\sth M\models^\pc_\lang\theory\}$? \end{que}

Trying to replicate the proof of \cref{l:existence of weak completions} applying some variation of Zorn's Lemma to answer \cref{q:completions} is unlikely to work. The key issue is that, as the following example shows, there are negative local theories $\theory$ and chains of complete negative local theories containing $\theory$ without a complete lower bound (containing $\theory)$.
\begin{ex} Consider \cref{e:local irreducibility}(\ref{itm:example local irreducibility:wC does not imply C}). Recall that $\lang\coloneqq\{P_n,Q_n,\dd_n\}_{n\in\N}$ and $\theory\coloneqq\Th_-(Z)$ where $Z$ is the $\lang${\hyp}structure with universe $\Z$ and interpretations $Z\models_\lang P_n(x)\Leftrightarrow x>n$, $Z\models_\lang N_n(x)\Leftrightarrow x<-n$ and $Z\models_\lang \dd_n(x,y)\Leftrightarrow |x-y|\leq n$. For each $n\in\N$, consider the substructure $C_n=[-n,n]\subseteq Z$ and the negative local theory $\theory_n\coloneqq\Th_-(C_n)$. Obviously, for each $n\in\N$, $\theory_n$ is $\dd_{2n}${\hyp}irreducible, so complete by \cref{t:local irreducibility}. Since $C_n$ embeds into $C_{n+1}$ for each $n\in\N$, we get by \cref{r:homomorphism and positive embeddings} that $\theory_{n+1}\subseteq \theory_n$. Since $Z$ is isomorphic to the direct limit of all $C_n$ (with the natural inclusions), we get that $\theory=\bigcap_{n\in\N}\theory_n$ by \cref{l:lemma direct limit}. Thus, there is no complete theory containing $\theory$ and contained in $\theory_n$ for all $n\in\N$.\end{ex}


\section{Inherent locality} \label{s:inherent locality} Whenever one modifies a concept, it is useful to find the conditions under which the new concept behaves like the old one. The following definition seems to be the correct one for negative local theories and general negative theories.  Recall that, unless otherwise stated, we have fixed a local language $\lang$ and a locally satisfiable negative local theory $\theory$. 

\begin{defi}[Inherently local theories]
We say that $\theory$ is \emph{inherently local} if every $M\premodelspc\theory$ is local.
\end{defi}

To understand this definition, we note the following fact:

\begin{lem} Let $N\premodelspc \theory$. Then, $N$ satisfies \cref{itm:axiom 1,itm:axiom 2,itm:axiom 3,itm:axiom 4} of \cref{d:local structure}.
\end{lem}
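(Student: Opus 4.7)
The plan is to apply the standard non-local positive-logic denial lemma (the non-local analogue of \cref{l:denials}, cf.\ \cite[Lemma 2.3(1)]{hrushovski2022definability}): for any $N\mmodelspc\theory$, positive $\psi(x)$ and $a\in N^x$ with $N\not\mmodels\psi(a)$, there is a positive $\phi(x)$ such that $N\mmodels\phi(a)$ and $\theory\mmodels\neg\exists x\, \psi\wedge\phi$. Combined with the fact that every local model of $\theory$ satisfies $\theory_\loc$ and with the closure of $\theory$ under local implication, this will let me rule out each possible failure of \cref{itm:axiom 1,itm:axiom 2,itm:axiom 3,itm:axiom 4} in $N$.

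I would first dispatch \cref{itm:axiom 4}, which is the cleanest warm-up. Suppose $N\not\mmodels\BoundConst_{c_1,c_2}(c_1,c_2)$, and apply the denial lemma to the closed positive formula $\BoundConst_{c_1,c_2}(c_1,c_2)$: this yields a positive sentence $\phi$ with $N\mmodels\phi$ and $\theory\mmodels\neg(\BoundConst_{c_1,c_2}(c_1,c_2)\wedge\phi)$. Every local model of $\theory$ satisfies $\BoundConst_{c_1,c_2}(c_1,c_2)$ by \cref{itm:axiom 4}, hence satisfies $\neg\phi$; by closure of $\theory$ under local implication, $\neg\phi\in\theory$, contradicting $N\mmodels\theory\wedge\phi$.

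The other three axioms follow the same template. For \cref{itm:axiom 1}, assume $N\mmodels\dd(a,b)$ but $N\not\mmodels\dd(b,a)$; the denial lemma applied to $\dd(y,x)$ at $(b,a)$ produces a positive $\phi(y,x)$ with $N\mmodels\phi(b,a)$ and $\theory\mmodels\neg\exists y,x\, \dd(y,x)\wedge\phi(y,x)$. Symmetry of $\dd$ in every local model of $\theory$ then forces no local model to satisfy $\exists x,y\, \dd(x,y)\wedge\phi(y,x)$ either (any local witness would, after swapping the roles of $x$ and $y$ via \cref{itm:axiom 1}, produce a local witness of the formula already ruled out), so $\neg\exists x,y\, \dd(x,y)\wedge\phi(y,x)\in\theory$; this contradicts $N\mmodels\dd(a,b)\wedge\phi(b,a)$. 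For \cref{itm:axiom 2}, with $\dd_1\preceq\dd_2$ and an alleged failure $N\mmodels\dd_1(a,b)\wedge\neg\dd_2(a,b)$, apply the lemma to $\dd_2(x,y)$ at $(a,b)$ and use $\dd_1^M\subseteq\dd_2^M$ in local $M\models\theory$. For \cref{itm:axiom 3}, apply the lemma to $(\dd_1\ast\dd_2)(x,z)$ at the purported counterexample $(a,c)$, and exploit the triangle inequality in local models via a fresh existentially quantified midpoint $y$.

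I do not expect a real obstacle: the argument is a uniform use of the denial lemma combined with $\theory$ being closed under local implication. The only thing to be careful about is the variable bookkeeping (swap of $x,y$ in \cref{itm:axiom 1}, introduction of a midpoint in \cref{itm:axiom 3}). It is worth flagging that this method intrinsically cannot reach \cref{itm:axiom 5}, since that axiom is not a universal first{\hyp}order sentence but rather the omission of certain quantifier free binary types; this is precisely why ``inherently local'' is a nontrivial strengthening of satisfying just \cref{itm:axiom 1,itm:axiom 2,itm:axiom 3,itm:axiom 4}.
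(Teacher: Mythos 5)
Your proof is correct and is essentially the paper's own argument unpacked: the paper writes each of (\ref{itm:axiom 1})--(\ref{itm:axiom 4}) as a universal implication $\forall x\, \varphi\rightarrow\psi$ between positive formulas valid in every local model of $\theory$ and then cites the ``furthermore'' clause of \cref{l:approximation and complementary}(\ref{itm:approximation and complementary:approximation}), which is itself proved by exactly the combination you use by hand for each axiom, namely \cite[Lemma 2.3(1)]{hrushovski2022definability} together with closure of $\theory$ under local implication. Your closing observation that this method cannot reach (\ref{itm:axiom 5}) is also the right diagnosis of why the lemma stops where it does.
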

\begin{proof} By \cref{l:approximation and complementary}(\ref{itm:approximation and complementary:approximation}), it is enough to show that \cref{itm:axiom 1,itm:axiom 2,itm:axiom 3,itm:axiom 4} can be specified in the form $\forall x\mathrel{} (\varphi\rightarrow\psi)$ for $\varphi,\psi$ positive.
\begin{enumerate}[wide]
\item[(\ref*{itm:axiom 1})] $\forall x y\mathrel{}(\dd(x,y)\rightarrow \dd(y,x))$ for $\dd$ an arbitrary locality relation.
\item[(\ref*{itm:axiom 2})] $\forall x y\mathrel{}(\dd_1(x,y)\rightarrow \dd_2(x,y))$ for locality relations $\dd_1\prec \dd_2$. 
\item[(\ref*{itm:axiom 3})] $\forall x y\mathrel{}((\dd_1(x,y)\wedge \dd_2(x,y))\rightarrow \dd_1\ast\dd_2(x,y))$ for $\dd_1,\dd_2$ locality relations on the same sort.
\item[(\ref*{itm:axiom 4})] $\top\rightarrow \BoundConst_{c_1,c_2}(c_1,c_2)$ for constants $c_1,c_2$ of the same sort.\qedhere
\end{enumerate}
\end{proof}

\begin{coro}\label{c:pc almost local} Let $N\premodelspc \theory$. Then, $N$ is local if and only if $N$ satisfies \cref{itm:axiom 5} of \cref{d:local structure}.
\end{coro}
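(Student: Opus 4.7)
The plan is essentially a one-line argument invoking the preceding lemma, so I would write it as such. The forward direction is immediate from the definition of a local structure (Definition d:local structure): if $N$ is local, it satisfies all five axioms (\ref{itm:axiom 1})--(\ref{itm:axiom 5}), in particular (\ref{itm:axiom 5}).

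For the backward direction, I would simply combine the hypothesis with the previous lemma. By the previous lemma, since $N\mmodelspc\theory$, $N$ automatically satisfies (\ref{itm:axiom 1}), (\ref{itm:axiom 2}), (\ref{itm:axiom 3}), and (\ref{itm:axiom 4}). By assumption, $N$ also satisfies (\ref{itm:axiom 5}). Hence $N$ satisfies all the locality axioms of \cref{d:local structure}, so by definition $N$ is a local $\lang$-structure.

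There is no real obstacle here: the corollary is just a repackaging of the previous lemma, isolating (\ref{itm:axiom 5}) as the only nontrivial locality axiom that is not automatically forced by $N\mmodelspc\theory$. The interesting content lies entirely in the preceding lemma, which observed that axioms (\ref{itm:axiom 1})--(\ref{itm:axiom 4}) are expressible in the form $\forall x\,\varphi\rightarrow\psi$ with $\varphi,\psi$ positive and therefore transfer from $\theory$-models to non-local positively closed models of $\theory$ via \cref{l:approximation and complementary}(\ref{itm:approximation and complementary:approximation}). The second-order nature of (\ref{itm:axiom 5}) is precisely why it cannot be so transferred, and hence why it is the residual condition distinguishing local positively closed models from arbitrary ones.
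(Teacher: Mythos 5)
Your proof is correct and matches the paper exactly: the corollary is stated there without proof precisely because it is the immediate combination of the preceding lemma (which yields axioms (\ref{itm:axiom 1})--(\ref{itm:axiom 4}) for any $N\mmodelspc\theory$) with the hypothesis supplying (\ref{itm:axiom 5}). Nothing further is needed.
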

\begin{theo}\label{t:inherently local}
The following are equivalent:
\begin{enumerate}[label={\rm{(\arabic*)}}, ref={\rm{\arabic*}}, wide]
\item\label{itm:inherently local:IL}  $\Mod^\pc(\theory)=\Mod^\pc_*(\theory)$.
\item\label{itm:inherently local:S_x is compact} A subset $\Gamma(x)\subseteq \For^x_+(\lang)$ is locally satisfiable in $\theory$ if and only if it is finitely locally satisfiable in $\theory$.
\item\label{itm:inherently local:S_2 is compact} A subset $\Gamma(x,y)\subseteq \For^{xy}_+(\lang)$, with $x$ and $y$ single variables of the same sort, is locally satisfiable in $\theory$ if and only if it is finitely locally satisfiable in $\theory$.
\item\label{itm:inherently local:nonlocal pc is local} $\theory$ is inherently local.
\end{enumerate}
\end{theo}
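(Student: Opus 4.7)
The plan is to prove the cycle $(1)\Leftrightarrow(4)$, $(4)\Rightarrow(2)\Rightarrow(3)\Rightarrow(4)$. Both the main equivalence and the most technical step will rely on the standard non{\hyp}local analogue of \cref{t:positively closed}: every $N\mmodels\theory$ admits a homomorphism to some $N'\mmodelspc\theory$, obtained by the same proof as \cref{t:positively closed} with all references to locality removed.

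The easier implications go as follows. $(1)\Rightarrow(4)$ is immediate, since any $N\mmodelspc\theory$ lies in $\Mod^\pc(\theory)$ and is therefore local by definition. For $(4)\Rightarrow(1)$, the inclusion $\Mod^\pc_*(\theory)\subseteq \Mod^\pc(\theory)$ follows from (4) together with the fact that pc{\hyp}ness among all models of $\theory$ trivially implies pc{\hyp}ness among local ones; conversely, given $M\in\Mod^\pc(\theory)$ and a homomorphism $f\map M\to N$ into some $N\mmodels\theory$, compose with a homomorphism $g\map N\to N'$ into a non{\hyp}local pc model, which is local by (4), so that $g\circ f$ is a positive embedding by pc{\hyp}ness of $M$; a short chase using that $g$ preserves positive formulas forward then shows that $f$ itself is a positive embedding. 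For $(4)\Rightarrow(2)$: if $\Gamma(x)\subseteq\For^x_+(\lang)$ is finitely locally satisfiable with $\theory$, it is in particular finitely satisfiable in the non{\hyp}local sense together with $\theory$, so ordinary first{\hyp}order compactness yields $N\mmodels\theory$ with $a\in N^x$ realising $\Gamma$; pushing $a$ along a homomorphism to a non{\hyp}local pc model (local by (4)) and using preservation of positive formulas provides a local realisation. The implication $(2)\Rightarrow(3)$ is a trivial specialisation.

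The main step, and the main obstacle, is $(3)\Rightarrow(4)$. Given $N\mmodelspc\theory$, \cref{c:pc almost local} reduces the task to verifying \cref{itm:axiom 5}. Suppose for contradiction that there exist $a,b\in N^s$ with $N\not\mmodels\dd(a,b)$ for every $\dd\in\Dtt^s$, and consider the positive type $p(x,y)\coloneqq\{\varphi\in\For^{xy}_+(\lang)\sth N\mmodels\varphi(a,b)\}$. For any finite $\Gamma_0\subseteq p$, if $\neg\exists x,y\,\bigwedge\Gamma_0\in\theory$ then $N\mmodels\neg\exists x,y\,\bigwedge\Gamma_0$ (since $N\mmodels\theory$), contradicting $N\mmodels\bigwedge\Gamma_0(a,b)$; hence $\exists x,y\,\bigwedge\Gamma_0\in\theory_+$ and $\Gamma_0$ is locally satisfiable with $\theory$. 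Applying (3), there exist a local model $M\models\theory$ and $(a',b')\in M^{ss}$ realising $p$, and locality of $M$ forces $M\models\dd_0(a',b')$ for some $\dd_0\in\Dtt^s$. Now invoke the non{\hyp}local analogue of \cref{l:denials} (essentially \cite[Lemma 2.3(1)]{hrushovski2022definability}) applied to $N\mmodelspc\theory$ and the positive formula $\dd_0(x,y)$ not realised at $(a,b)$: this produces a positive $\psi(x,y)$ with $N\mmodels\psi(a,b)$ and $\theory\mmodels\neg\exists x,y\,\psi\wedge\dd_0$. Since every local model of $\theory$ is also a non{\hyp}local one, this implication descends to the local setting, so no local model of $\theory$ realises $\psi\wedge\dd_0$; but $\psi\in p$ gives $M\models\psi(a',b')\wedge\dd_0(a',b')$, the desired contradiction. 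The delicate point is bridging the non{\hyp}local denial at $N$ with the local realisation at $M$, via the crucial but trivial observation that non{\hyp}local negative implications refine local ones.
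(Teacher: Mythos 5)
Your proof is correct, and it uses exactly the same ingredients as the paper's (\cref{c:pc almost local}, the denial lemma \cite[Lemma 2.3(1)]{hrushovski2022definability}, ordinary compactness, and the existence of homomorphisms into non{\hyp}local positively closed models), but it is organised slightly differently. The paper proves the single cycle $(1)\Rightarrow(2)\Rightarrow(3)\Rightarrow(4)\Rightarrow(1)$, whereas you prove $(1)\Leftrightarrow(4)$ and then $(4)\Rightarrow(2)\Rightarrow(3)\Rightarrow(4)$; this is logically equivalent and your $(4)\Rightarrow(1)$ and $(4)\Rightarrow(2)$ match the paper's $(4)\Rightarrow(1)$ and $(1)\Rightarrow(2)$ almost verbatim once $(1)\Leftrightarrow(4)$ is in hand. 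The one genuinely different piece is $(3)\Rightarrow(4)$: the paper applies the denial lemma to \emph{every} $\dd\in\Dtt^s$ to build a family $\{\varphi_\dd\}$ that is not locally satisfiable, and then uses $(3)$ in the contrapositive to extract a finite inconsistent subfamily contradicting $N\mmodels\Gamma(a,b)$; you instead apply $(3)$ in the forward direction to the full positive type of $(a,b)$, obtain a local realisation lying in some ball $\dd_0$, and then invoke the denial lemma only once, for that $\dd_0$, to derive the contradiction. The two arguments are dual to each other; yours realises the type locally first and denies once, the paper's denies everywhere first and compactifies. Both are sound, and neither is appreciably shorter or more general than the other.
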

\begin{proof}\leavevmode
\begin{enumerate}[wide]
\item[(\ref*{itm:inherently local:IL})$\Rightarrow$(\ref*{itm:inherently local:S_x is compact})] Let $\Gamma(x)$ be a set of positive  $\lang$ formulas in $x$, and assume it is finitely locally satisfiable. In particular, $\Gamma$ is finitely satisfiable, so, by the usual positive compactness (compactness and \cite[Lemma 1.20]{yaacov2003positive}), there is some $M\premodelspc\theory$ and $a\in M^x$ such that $M\models_\lang\Gamma(a)$; but by assumption $M\models^\pc_\lang\theory$ and, in particular, $M\models_\lang\theory$ as required.
\item[(\ref*{itm:inherently local:S_x is compact})$\Rightarrow$(\ref*{itm:inherently local:S_2 is compact})] is trivial.
\item[(\ref*{itm:inherently local:S_2 is compact})$\Rightarrow$(\ref*{itm:inherently local:nonlocal pc is local})] By \cref{c:pc almost local}, it is enough to show that, if $a,b\in N^{s}$ for some single sort $s$, then there exists $\dd\in\Dtt^s$ such that $(a,b)\in \dd^{N}$. Assume otherwise; then by \cite[Lemma 2.3(1)]{hrushovski2022definability} (or, for more original references, \cite[Lemma 1.26]{yaacov2003positive} and \cite[Lemma 14]{yaacov2007fondements}), for any $\dd\in\Dtt^s$ there is some positive formula $\varphi_{\dd}(x,y)$ such that $(a,b)\in \varphi_{\dd}(N)$ and 
\[\theory\premodels\neg\exists x y\mathrel{} (\varphi_{\dd}(x,y)\wedge \dd(x,y)).\]
Let $\Gamma(x,y)=\left\{ \varphi_{\dd}(x,y)\right\}_{\dd\in\Dtt^s}$. Then, $\Gamma(x,y)$ is not locally satisfiable in $\theory$, since for any realisations $a',b'\in M^{s}$ with $M\models_\lang \theory$ we have $M\models_\lang\varphi_\dd(a',b')$ for any $\dd\in\Dtt^s$, so $(a',b')\notin \dd(M)$ as $\varphi_\dd\perp\dd$, contradicting \cref{itm:axiom 5}.

Thus, by assumption, $\Gamma$ is not finitely locally satisfiable in $\theory$. In other words, there exist $\dd_{1},\ldots,\dd_{n}$ such that there are no $M\models_\lang\theory$ and $a',b'\in M^{s}$ with $M\models_\lang\bigwedge_{i=1}^{n}\varphi_{\dd_{i}}(a',b')$. That means $\neg\exists x y\mathrel{} \bigwedge_{i=1}^{n}\varphi_{\dd_{i}}(x,y)\in\theory$, contradicting the assumption that $N\premodels\Gamma(a,b)\wedge\theory$. 
\item[(\ref*{itm:inherently local:nonlocal pc is local})$\Rightarrow$(\ref*{itm:inherently local:IL})] Assume $N\premodelspc\theory$. By assumption, $N$ is local; and since $N$ is positively closed for $\Mod_*(\theory)$, it is certainly positively closed for the smaller class $\Mod(\theory)$.

Conversely, assume $N\models^\pc_\lang\theory$ and $f\map N\to M$ is a homomorphism to $M\premodels\theory$. By \cite[Lemma 1.20]{yaacov2003positive}, there is a homomorphism $h\map M\to M'$ to $M'\premodelspc\theory$. By assumption, $M'$ is local, so $h\circ f$ is a positive embedding as $N\models^\pc_\lang\theory$. If $M\models_\lang \varphi(f(a))$ for a positive formula $\varphi(x)$ and $a\in N^x$, then $M'\models_\lang \varphi(hf(a))$ by \cref{r:homomorphism and positive embeddings}, concluding $M\models_\lang \varphi(a)$ as required. \qedhere
\end{enumerate}
\end{proof}

\begin{coro} \label{c:inherently local and irreducible implies the local joint continuation property}
Every inherently local irreducible negative local theory has the local joint continuation property.
\end{coro}
\begin{proof} Since $\Mod(\theory)=\Mod_\star(\theory)$, the local joint continuation property is exactly \cite[Proposition 2.9(3)]{segel2022positive} (or, for more original references, \cite[Proposition 1.44]{yaacov2003positive} and \cite[Lemma 18]{yaacov2007fondements}) for $\theory$. Thus, the claim is just {\rm{(1)}}$\Rightarrow${\rm{(3)}} in that proposition.
\end{proof}
\begin{rmk} To get an inherently local theory $\theory$ which is not irreducible, thus certainly without the local joint continuation property, just take any negative theory which is not irreducible and add a locality relation which holds between any two elements. For example, consider the theory $\{\neg\exists x\mathrel{} (P(x)\wedge Q(x))\}$ where $P,Q$ are unary relation symbols, which satisfies $\exists x\mathrel{} P(x)\in\theory_+$ and $\exists x\mathrel{} Q(x)\in\theory_+$, but clearly not $\exists x\mathrel{} (P(x)\wedge Q(x))\in\theory_+$. If one now adds $\Dtt=\{=,\top\}$ (with the trivial ordered monoid structure), then $\theory$ becomes inherently local, since all positively closed models are singletons. 
\end{rmk}

\begin{ex} There exist pointed negative local theories with the local joint continuation property which are not inherently local. For instance, consider \cref{e:local irreducibility}(\ref{itm:example local irreducibility:LJCP does not imply UI}). Then, the type $\bigwedge_{n\in \N} P_n(x)\wedge Q_n(y)$ is finitely locally satisfiable, but not locally satisfiable. Thus, $\theory$ is not inherently local by \cref{t:inherently local}, but it has the local joint continuation property by \cref{t:local irreducibility in pointed languages}.
\end{ex}

\begin{coro} \label{c:uniform irreducibility implies inherently local} Every uniformly irreducible negative local theory is inherently local.
\end{coro}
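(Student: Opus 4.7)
\emph{Proof proposal.} My plan is to combine the structural information about local positively closed models given by \cref{l:uniform irreducibility} with the transfer principle \cref{l:approximation and complementary}(\ref{itm:approximation and complementary:approximation}) to move a universal statement from $\Mod^\pc(\theory)$ to $\Mod^\pc_*(\theory)$, after which \cref{c:pc almost local} finishes the argument. By \cref{c:pc almost local}, it is enough to show that an arbitrary $N\mmodelspc\theory$ satisfies axiom \cref{itm:axiom 5}.

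Fix $\dd=(\dd_s)_{s\in\Sorts}$ such that $\theory$ is $\dd${\hyp}irreducible. Inspecting the forward direction of the proof of \cref{l:uniform irreducibility} gives the slightly finer statement that, for every $M\models^\pc\theory$ and every single sort $s$, $M\models\dd_s\ast\dd_s(a,b)$ for all $a,b\in M^s$. Equivalently, $\theory\models^\pc\forall x,y\,(x=x)\rightarrow\dd_s\ast\dd_s(x,y)$, in two single variables $x,y$ of sort $s$.

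Now I would apply \cref{l:approximation and complementary}(\ref{itm:approximation and complementary:approximation}) with $\varphi(x,y)=(x=x)$ (a positive formula) and $\psi(x,y)=\dd_s\ast\dd_s(x,y)$ (a quantifier{\hyp}free atomic, hence local positive, formula). This yields $\theory\models (x=x)\leq_\theory \dd_s\ast\dd_s$; crucially, the ``furthermore'' clause of the same lemma upgrades this to $\theory\mmodelspc\forall x,y\,\dd_s\ast\dd_s(x,y)$, i.e.\ the statement now holds in every (not necessarily local) positively closed model of $\theory$.

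Applying this to any $N\mmodelspc\theory$ and any $a,b\in N^s$ yields $N\mmodels\dd_s\ast\dd_s(a,b)$; since $\dd_s\ast\dd_s\in\Dtt^s$ (as $\Dtt^s$ is closed under $\ast$), axiom \cref{itm:axiom 5} holds in $N$ on sort $s$. Since $s$ was arbitrary, $N$ satisfies \cref{itm:axiom 5}, hence is local by \cref{c:pc almost local}, and $\theory$ is inherently local. I do not expect any serious obstacle: the whole argument is essentially a repackaging of \cref{l:uniform irreducibility} and \cref{l:approximation and complementary}, with the ``furthermore'' clauses of the latter doing the genuine work of bridging the local and non{\hyp}local positively closed classes.
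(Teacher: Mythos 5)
Your proof is correct, but it takes a genuinely different route from the paper's. The paper verifies criterion (\ref{itm:inherently local:S_2 is compact}) of \cref{t:inherently local}: given a finitely locally satisfiable set $\Gamma(x,y)$ of positive formulas in two single variables of sort $s$, $\dd${\hyp}irreducibility lets one adjoin a fixed bound $\Bound(x,y)$ to every finite fragment while remaining in $\theory_+$; ordinary compactness then yields a non{\hyp}local model realising $\Gamma\wedge\Bound$, and \cref{l:locally satisfiable} cuts out a local component realising $\Gamma$. You instead verify the definition of inherent locality directly: you extract from the argument for \cref{l:uniform irreducibility} the uniform statement $\theory\models^\pc\forall x,y\;\dd_s\ast\dd_s(x,y)$ and transfer it to arbitrary $N\mmodelspc\theory$ via the ``furthermore'' clause of \cref{l:approximation and complementary}(\ref{itm:approximation and complementary:approximation}), then conclude with \cref{c:pc almost local}. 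All the steps check out: $\dd_s\ast\dd_s$ is a single locality relation symbol, it is atomic hence local positive, and the transfer clause applies verbatim. Your version is shorter given the lemmas already in place and makes the mechanism transparent (non{\hyp}local positively closed models are forced to be $\dd_s\ast\dd_s$-bounded); its one blemish is that it relies on the \emph{proof} rather than the statement of \cref{l:uniform irreducibility} --- the statement only provides, for each $M\models^\pc\theory$, some bounding relation, whereas the transfer lemma needs a single relation uniform over all such $M$, which is exactly what the $\dd$-irreducibility argument inside that proof supplies. The paper's argument is more self-contained (independent of \cref{l:uniform irreducibility}) and additionally exhibits the two-variable compactness property (\ref{itm:inherently local:S_2 is compact}) explicitly.
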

\begin{proof} Assume $\theory$ is uniformly irreducible. Using \cref{l:uniform irreducibility,r:uniform irreducibility uniform bound}, find uniform bounds $\dd=(\dd_s)_{s\in \Sorts}$ for each sort for every local positively closed model. Let $x$ and $y$ be single variables on the same sort $s$ and $\Gamma(x,y)$ a finitely locally satisfiable set of positive formulas in $\theory$.  Consider 
\[\Gamma_1(x,y)=\Gamma(x,y)\wedge\{\forall z,w \mathrel{} \dd_s(z,w)\}_{s\in \Sorts} \wedge \theory \wedge \theory_\loc\]
Consider a finite subset $\Gamma_0\subseteq \Gamma$. Since $\Gamma(x,y)$ is finitely satisfiable, there is $M\models^\pc_\lang\theory$ such that $M\models^\pc_\lang \exists x,y\mathrel{}\bigwedge\Gamma_0(x,y)$. By the choice of $\dd$ according to \cref{l:uniform irreducibility,r:uniform irreducibility uniform bound}, $M$ realizes 
\[\Gamma_0(x,y)\wedge\{\forall z,w \mathrel{} \dd_s(z,w)\}_{s\in \Sorts} \wedge \theory \wedge \theory_\loc\] 
Hence, as $\Gamma_0$ is arbitrary, $\Gamma_1(x,y)$ is finitely satisfiable. By usual compactness, there is a model of $\theory \wedge \theory_\loc$ realising $\Gamma(x,y)$ and realising $\forall z,w\mathrel{}\dd_s(z,w)$ for each sort. This is a local model realising $\Gamma(x,y)$. As $\Gamma$ is arbitrary, by \cref{t:inherently local}, $\theory$ is inherently local.
\end{proof}

\begin{ex}\label{e:IL not imply UI} The converse of \cref{c:uniform irreducibility implies inherently local} does not hold, i.e. inherently local does not imply uniform irreducibility. 

Consider the one{\hyp}sorted local language $\lang$ with locality relations $\Dtt=\{\dd_m\}_{m\in\N}$ (with the usual ordered monoid structure induced by the natural numbers) and binary predicates $\{e_n\}_{n\in\N}$. Consider the local $\lang${\hyp}structure in $\Z$ given by $\dd_m(x,y)\Leftrightarrow |x-y|\leq m$ and $e_n(x,y)\Leftrightarrow |x-y|=n$. Let $\theory$ be the negative local theory of $\Z$. Clearly, a map $f\map \Z\to \Z$ is an automorphism if and only if it is distance preserving, if and only if it is of the form $x\mapsto x+a$ or $x\mapsto -x+a$.

We claim that every type is isolated by a formula like $\bigwedge_{i<j<n}e_{n_{i,j}}(x_{i},x_{j})$; call those formulas \emph{full distance formulas}. Indeed, assume $a$ and $b$ are tuples such that $\left|a_i-a_j\right|=\left|b_i-b_j\right|$ for all $i,j<n$. Without loss of generality, $a_0<...<a_{n-1}$ (if $a_{i}=a_{j}$ for $i\neq j$, we can safely omit $a_j$). Denote $c_i\coloneqq a_{i+1}-a_i$. We have $\left|b_0-b_1\right|=\left|a_0-a_1\right|\neq 0$, so we have two options:
\begin{enumerate}[wide]
\item If $b_0<b_1$, then we have that
\[\left|b_2-b_0\right|=\left|a_2-a_0\right|=c_0+c_1=\left|b_2-b_1\right|+\left|b_1-b_0\right|.\]
Obviously, for all $x,y,z$, we have $\left|x-z\right|=\left|y-x\right|+\left|z-y\right|$ if and only if $x\leq y\leq z$ or $z\leq y\leq x$. Therefore, $b_0<b_1<b_2$ (since $b_2\leq b_1\leq b_0$ is impossible). Likewise, we get $b_0<b_1<\cdots<b_{n-1}$. Thus, 
\[b_i=b_0+\sum_{j<i}(b_{j+1}-b_j)=b_0+\sum_{j<i}(a_{j+1}-a_j)=b_0-a_0+a_i.\]
Therefore, $x\mapsto x+b_0-a_0$ maps $a$ to $b$.
\item If $b_1<b_0$, we likewise conclude $b_{n-1}<\cdots<b_0$. Thus, 
\[b_i=b_0-\sum_{j<i} (b_j-b_{j+1})=b_0-\sum_{j<i} (a_j-a_{j+1})=b_0-a_0+a_i.\]
Therefore, $x\mapsto b_0-a_0-x$ maps $a$ to $b$.
\end{enumerate}

Now, we claim that every positive formula is equivalent in $\Z$ to a disjunction of full distance formulas (i.e. a positive boolean combination of $e$ relations). 

Since $\dd_m$ is equivalent to $\bigvee_{n\leq m}e_n$, by induction on formula complexity we only need to show that, if $\theta$ is equivalent to a finite disjunction of full distance formulas, then so is $\varphi=\exists y\mathrel{}\theta$. By distributivity of $\exists$ over disjunctions, elimination of $\exists$ over dummy variables, symmetry of $e_m$ for each $m$ and the fact that $e_n(x,y)\wedge e_m(x,y)$ is inconsistent for $m\neq n$, it suffices to show that every formula of the form $\varphi(x)=\exists y\mathrel{}\bigwedge_{i<n}e_{m_{i}}(x_{i},y)$ is equivalent in $\Z$ to a finite disjunction of full distance formulas.

We prove this by induction on $n$. If $n=1$, then $\exists y\, e_{m_0}(x_0,y)$ is just equivalent to $e_0(x_0,x_0)$. Suppose it holds for $n-1>1$. Then, noting that $e_{m_i}(x_i,y)\wedge e_{m_j}(x_j,y)$ implies $e_{m_i+m_j}(x_i,x_j)\vee e_{|m_i-m_j|}(x_i,x_j)$, we get that $\varphi$ implies
\[\bigwedge_{i<j<n}(e_{m_i+m_j}(x_i,x_j)\vee e_{|m_i-m_j|}(x_i,x_j)),\] 
which is equivalent by distributivity to a finite disjunction of full distance formulas. 

Let $\Sigma(x)$ be the set of all full distance formulas in this disjunction that imply $\varphi$. Since each full distance formula isolates a type, if $\psi$ is a full distance formula that does not imply $\varphi$, then $\Z\models_\lang\neg\exists x\mathrel{} (\varphi\wedge\psi)$. If $\psi$ is a full distance formula that does not appear in the disjunction, it is inconsistent with it in $\Z$ (since different full distance formulas are inconsistent). Therefore, $\Z\models_\lang\forall x\mathrel{} (\varphi\leftrightarrow\bigvee\Sigma)$, as required.

We can deduce from this that $\theory$ is inherently local. Indeed, by \cref{t:inherently local}, we need to show that every finitely locally satisfiable set $\Gamma(x,y)$ of positive formulas on two single variables is locally satisfiable.

Assume $\Gamma$ is finitely locally satisfiable in $\theory$. Then, for any finite subset $\Gamma_0\subseteq\Gamma$ we have $\neg\exists x y\mathrel{} \bigwedge\Gamma_0\notin\theory$. That means there exist $a,b\in \Z$ realising $\Gamma_0$. If every $\varphi\in\Gamma$ is equivalent in $\Z$ to $\top$ (i.e. $e_0(x,x)$), there is nothing to show. Thus, fix some $\varphi_0(x,y)\in\Gamma$ not equivalent to $\top$. 

By the previous discussion, $\varphi_0(x,y)$ is equivalent to a disjunction $\bigvee_{i<k}\psi_{i}(x,y)$ of full distance formulas. At least one of the $\psi_i$ is realised in $\Z$ since $\varphi$ is realised. Since $e_n\wedge e_m$ for $n \neq m$ is inconsistent (and $\varphi_0$ is consistent), we may assume that every $\psi_i$ is just $e_{n_i}(x,y)$, so $\varphi_0$ is equivalent to $\bigvee_{i<k}e_{n_i}(x,y)$. Denote by $A_{\varphi_{0}}$ the set $\{ n_i\sth i<k\}$. 

Since the same holds for any $\varphi\in\Gamma$, if $\varphi_1,\ldots,\varphi_m\in\Gamma$, then $\Z\models_\lang\exists x y\mathrel{}\bigwedge_{i=1}^{m}\varphi_i$, so $\bigcap_{i=1}^{m}A_{\varphi_i}\neq\emptyset$. Since $A_{\varphi_0}$ is finite, sets of the form $A_{\varphi_0}\cap\bigcap_{i=1}^{m}A_{\varphi_i}$ have a minimum (which is not empty), thus, for any $n$ in that minimum, $e_n$ implies $\varphi$ for any $\varphi\in\Gamma$, so $\Z\models_\lang\Gamma(0,n)$, as required.

On the other hand, $\theory$ is not uniformly irreducible, since for any fixed $n_0,n_1\in\N$, we have that $\exists x_0 x_1\mathrel{} x_0=x_1$ and $\exists y_0 y_1\mathrel{} e_{n_0+n_1+1}(y_0,y_1)$ are both true in $\Z$, but 
\[\neg\exists x_0 x_1 y_0 y_1\mathrel{} (x_0=x_1\wedge e_{n_0+n_1+1}(y_0,y_1)\wedge \dd_{n_0}(x_0,y_0)\wedge \dd_{n_1}(x_1,y_1))\in\theory.\]
\end{ex}

In the context of one{\hyp}sorted languages, the reasoning in \cref{e:IL not imply UI} is somehow optimal, in the sense that it can be generalised to get a sufficient condition under which the local joint continuation property implies inherent locality.

\begin{lem} Assume $\theory$ is a negative local theory with the local joint continuation property in a one{\hyp}sorted language $\lang$. Assume further that for any atomic formula $\varphi(x,y,z)$ with $x$ and $y$ single variables, there is some $\dd\in\Dtt$ such that 
\[(\exists z\mathrel{} \varphi(x,y,z))\leq\dd(x,y).\]
Then, $\theory$ is inherently local.
\end{lem}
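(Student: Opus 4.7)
By \cref{t:inherently local} it suffices to verify condition (\ref{itm:inherently local:S_2 is compact}): every finitely locally satisfiable $\Gamma(x,y)\subseteq \For^{xy}_+(\lang)$ with $x,y$ single variables is locally satisfiable in $\theory$. So fix such a $\Gamma$.

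My first step is a structural dichotomy for primitive positive formulas that the atomic hypothesis buys us. Let $\chi(x,y)=\exists z\, \phi(x,y,z)$ with $\phi$ a conjunction of atomic formulas (cf.\ \cref{r:normal forms}). Form the Gaifman graph $G_\phi$ on the free variables of $\phi$, joining two variables whenever they co-occur in an atomic conjunct. If $x$ and $y$ lie in the same component of $G_\phi$, pick a connecting path $x=v_0,v_1,\dots,v_k=y$: for each edge $v_{i-1}v_i$ the atomic hypothesis (applied with $v_{i-1},v_i$ as the distinguished pair and the other variables existentially quantified) yields $\dd_i$ with that atomic conjunct bounded by $\dd_i(v_{i-1},v_i)$. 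By \cref{itm:axiom 3} applied iteratively, together with \cref{l:approximation and complementary}(\ref{itm:approximation and complementary:approximation}), we obtain $\chi\leq (\dd_1\ast\cdots\ast\dd_k)(x,y)$. Otherwise $x$ and $y$ are in different components; partitioning $z=z_x\cup z_y$ accordingly we get $\phi=\phi_x(x,z_x)\wedge\phi_y(y,z_y)$ and hence $\chi\equiv \chi_x(x)\wedge\chi_y(y)$ with $\chi_x,\chi_y$ primitive positive. Call $\chi$ \emph{bounded} in the first case and \emph{split} in the second.

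Next I want to massage $\Gamma$ so that every member is either bounded or split. Apply disjunctive normal form: each $\psi\in\Gamma$ is equivalent to a finite disjunction of primitive positive formulas, each either bounded or split. Without loss of generality I enlarge $\Gamma$ by closing under finite conjunctions and then refine each conjunction by its (finite) disjunctive normal form, keeping only those disjuncts which are compatible with the rest; this stays finitely locally satisfiable. The key claim is: either some finite subset $\Gamma_0\subseteq\Gamma$ implies (after distribution) a bounded primitive positive formula $\eta(x,y)\leq \dd(x,y)$ realized by every witness of $\bigwedge \Gamma_0$ in some local positively closed model, or else (by finite choice) every finite $\Gamma_0$ admits only split realizations in some local positively closed model. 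In the first case we use \cref{l:approximation and complementary}(\ref{itm:approximation and complementary:approximation}) to add $\dd(x,y)$ to $\Gamma$ (preserving finite local satisfiability by pushing witnesses through a homomorphism to a local positively closed model) and then reproduce the $\dd$-bounded compactness argument of \cref{c:uniform irreducibility implies inherently local}: by \cref{l:locally satisfiable} applied to the extended $\Gamma\cup\{\dd(x,y)\}\cup\theory_{\loc}$, finite satisfiability yields a non-local model, whose local component at the realization of $(x,y)$ is a local model of $\theory$ still realizing $\Gamma$ (the bound on $(x,y)$ together with the atomic-hypothesis analysis ensures every existential witness for a formula in $\Gamma$ also lies in this local component).

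In the second (purely split) case, $\Gamma$ effectively decouples into $\Gamma_x(x)\cup\Gamma_y(y)$, where $\Gamma_x=\{\chi_x: (\chi_x\wedge\chi_y)\text{ is a split disjunct of some }\psi\in\Gamma\text{ realized in some local model of }\theory\}$ and symmetrically for $\Gamma_y$. Each is finitely locally satisfiable as a single-variable positive type, so by the same structural analysis (now for primitive positive formulas in a single variable) is realized in a local model of $\theory$; call the realizing models $A\ni a$ and $B\ni b$. By the local joint property applied to $A$ and $B$ we obtain a common local model $M\models\theory$ with homomorphisms $A,B\to M$, and the images realize $\Gamma_x$ and $\Gamma_y$ respectively (positive formulas are preserved by homomorphisms, \cref{r:homomorphism and positive embeddings}); by \cref{itm:axiom 5} the pair is automatically in a ball, and $(f(a),g(b))$ realizes the split form of every $\psi\in\Gamma$, hence realizes $\Gamma$.

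The main obstacle, and where I expect the proof to get subtle, is the precise dichotomy in the middle paragraph: $\Gamma$ may mix bounded and split disjuncts across different formulas, and any chosen realization might ``use'' different disjuncts for different formulas in $\Gamma$. The delicate point is arguing that after distributing conjunctions over disjunctions inside every finite $\Gamma_0$, one may coherently select bounded disjuncts everywhere (yielding a uniform locality bound by combining the individual $\dd$'s via $\ast$ and Remark on disjunctions of bounds), or else consistently select split disjuncts everywhere so as to land in the amalgamation case. Making this selection uniform over all finite subsets is what forces the split into the two cases above and, I expect, is the step that most fully exploits both hypotheses simultaneously.
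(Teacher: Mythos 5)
Your overall strategy --- verifying condition (\ref{itm:inherently local:S_2 is compact}) of \cref{t:inherently local} via a syntactic bounded/split dichotomy on primitive positive formulas --- is genuinely different from the paper's, but the proposal has real gaps, the central one of which you flag yourself without resolving. First, the ``coherent selection'' step is not just delicate but, as stated, a false dichotomy: it can happen that every disjunct of every formula in $\Gamma$ is bounded, yet no finite subset forces a \emph{single} bound $\dd(x,y)$, because different finite subsets are only realizable via disjuncts carrying larger and larger bounds. This is exactly the situation in \cref{e:IL not imply UI}, where each $e_n(x,y)\leq\dd_n(x,y)$ but the paper needs a separate finite-intersection argument to extract a common $n$; your plan gives no mechanism for this in general. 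Second, in the split case you reduce to realizing the single-variable types $\Gamma_x$ and $\Gamma_y$ ``by the same structural analysis'', but local satisfiability of finitely locally satisfiable $1$-types is condition (\ref{itm:inherently local:S_x is compact}) of \cref{t:inherently local}, which is \emph{equivalent} to inherent locality --- so this step is essentially circular, and the structural analysis does not apply since there is no second distinguished variable to bound against. (There is also the coherence problem of choosing the \emph{same} split disjunct of each $\psi$ on both sides so that the amalgamated pair actually realizes $\psi$.) Third, in the bounded case your parenthetical claim that every existential witness lies in the local component of $(x,y)$ fails for conjuncts whose variables are disconnected from both $x$ and $y$ in the Gaifman graph.

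The paper's proof sidesteps all three issues by arguing semantically rather than syntactically. It shows directly that any $N\mmodelspc\theory$ satisfies \cref{itm:axiom 5} (via \cref{c:pc almost local}): it first uses the local joint property together with \cref{l:direct limit,t:positively closed} to build a single local model $M\models\theory$ that is homomorphism $\kappa$-universal for local models of $\theory$. If $a_0,a_1\in N$ lay in no common ball, then for each $\dd$ there is a denial $\phi_\dd$ of $\dd(x_0,x_1)$ witnessed at $(a_0,a_1)$; the atomic hypothesis guarantees that variables co-occurring in an atomic conjunct have witnesses in the same local component of $N$, so $\phi_\dd$ factors over the (finitely many) local components of its witnesses, each of which is a local model mapping into the fixed $M$. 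Hence $M$ realizes a denial of $\dd$ at $(h(a_0),h(a_1))$ for \emph{every} $\dd$, contradicting locality of $M$. Because $M$ is fixed once and for all and each $\dd$ is handled separately, no uniform choice over $\Gamma$, no $1$-variable compactness, and no containment of stray witnesses in a single ball is ever needed. If you want to salvage your route, you would need to import precisely this universal-model amalgamation to handle the split components and the unbounded-bound phenomenon, at which point you have essentially reconstructed the paper's argument.
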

\begin{proof} By \cref{t:inherently local}, it suffices to show that for any $N\premodelspc\theory$ and any $a_0,a_1\in N$ there is some $\dd\in\Dtt$ such that $N\models_\lang\dd(a_0,a_1)$.  In other words, it suffices to show that $\Dtt(a)=N$ for any $a\in N$, where $\Dtt(a)$ denotes the local component at $a$. By \cref{l:approximation and complementary}, if $(\exists z\mathrel{} \varphi(x,y,z))\leq\dd(x,y)$, then $N\premodels \forall x y\mathrel{} ((\exists z\mathrel{} \varphi(x,y,z))\rightarrow\dd(x,y))$.

Let $\kappa>|N|^+$, and take some \emph{local homomorphism $\kappa${\hyp}universal model} $M$ of $\theory$, i.e. a local model $M$ of $\theory$ such that every local model of $\theory$ of size at most $\kappa$ admits a homomorphism to $M$ --- such a model exists by the local joint continuation property and \cref{l:direct limit,t:positively closed}.

Assume towards a contradiction that, for some $a_0\in N$, we have $\Dtt(a_0)\neq N$, and pick $a_1\in N\setminus\Dtt(a_0)$.  For any $\dd\in\Dtt$ there exists, by \cite[Lemma 2.3(1)]{hrushovski2022definability} (or, for more original references, \cite[Lemma 1.26]{yaacov2003positive} and \cite[Lemma 14]{yaacov2007fondements}), some quantifier free primitive positive formula $\theta_\dd(x)$ (with variable $x=\{x_i\}_{i<n}$ of length $n\geq 2$) such that 
\[\theory\premodels\neg\exists x_0 x_1\mathrel{} ((\exists x_2\ldots x_{n-1}\mathrel{} \theta_\dd(x))\wedge\dd(x_0,x_1))\] 
and 
\[N\premodels\exists x_2 \ldots x_{n-1}\mathrel{} \theta_\dd(a_0,a_1,x_2,\ldots,x_{n-1}).\] 
Fix $a_2,\ldots,a_{n-1}\in N$ such that $N\premodels\theta_\dd(a)$

Define the equivalence relation $\sim$ on $x$ by $x_i\sim x_j\Leftrightarrow a_i\in\Dtt(a_j)$. Let $F$ be the family of equivalence classes of $\sim$. By assumption, if $x_i,x_j$ appear in the same atomic subformula of $\theta_\dd$, then $x_i\sim x_j$. Therefore, we can rewrite $\theta_\dd$ as $\bigwedge_{y\in F}\theta^y_\dd(y)$ where $\theta^y_\dd$ is the conjunction of the atomic subformulas of $\theta_\dd$ with variables in $y$. Note that $\exists x\mathrel{} \theta_\dd$ is logically equivalent to $\bigwedge_{y\in F}\exists y\mathrel{} \theta_\dd^y(y)$ since equivalence classes are disjoint. For each $y\in F$, denote $N_y\coloneqq\Dtt(a_i)$ for one (any) $x_i\in y$. Note that $N_y\models_\lang\theory$ and $N_y\models_\lang\theta^y_\dd(a_y)$, where $a_y=(a_i)_{x_i\in y}$. 

For any $a$ in $N$, we have $\Dtt(a)\models_\lang\theory$ and $|\Dtt(a)|\leq|N|<\kappa$. Then, we can find a homomorphism $h_{\Dtt(a)}\map \Dtt(a)\to M$. By \cref{r:homomorphism and positive embeddings}, for any $y\in F$, $M\models_\lang\theta_\dd^y(h_{N_y}(a_y))$. Therefore, 
\[M\models_\lang\exists x_2 \ldots x_{n-1}\mathrel{} \theta_\dd(h_{\Dtt(a_0)}(a_0),h_{\Dtt(a_1)}(a_1), x_2,\ldots x_{n-1})\] and so, by choice of $\theta_\dd$, we have that $(h_{\Dtt(a_0)}(a_0),h_{\Dtt(a_1)}(a_1))\notin\dd^M$. As $\dd$ is arbitrary, we get that $h_{\Dtt(a_0)}(a_0),h_{\Dtt(a_1)}(a_1)$ are not in any locality relation with each other, contradicting that $M$ is a local structure.
\end{proof}

\bibliographystyle{alphaurl}
\bibliography{Completeness_in_local_positive_logic}
\end{document}